\newtheorem{theorem}{Theorem}
\newtheorem{definition}[theorem]{Definition}
\newtheorem{example}[theorem]{Example}
\newtheorem{lemma}[theorem]{Lemma}
\newtheorem{proposition}[theorem]{Proposition}
\DeclareMathOperator*{\esssup}{ess\,sup}
    \newcommand\ds{\displaystyle}
	\newcommand{\grad}{\operatorname{grad}}
    \DeclareMathOperator*{\essmax}{ess\,max}
    \DeclareMathOperator*{\argessmax}{arg\,ess\,max}
	\renewcommand{\theta}{\vartheta}
	\let\temp\epsilon
	\let\epsilon\varepsilon
	\let\varepsilon\temp
\definecolor{myblue}{rgb}{0.0,0.2,0.5}
	\definecolor{darkblue}{rgb}{0,0,0.5}
\title{Capital allocation for cash-subadditive risk measures: from BSDEs to BSVIEs}
\author{Emanuela Rosazza Gianin\footnote{Department of Statistics and Quantitative Methods,
University of Milano Bicocca,
Via Bicocca degli Arcimboldi 8, 20126 Milano, Italy.
emanuela.rosazza1@unimib.it} \, and Marco Zullino \footnote{Department of Mathematics and Applications,
University of Milano Bicocca, 20126 Milano, Italy.
m.zullino@campus.unimib.it}}
\providecommand{\keywords}[1]
{
  \small	
  \textbf{\textit{Keywords:}} #1
}
\begin{document}

\maketitle

\begin{abstract}
In the context of risk measures, the capital allocation problem is widely studied in the literature where different approaches have been developed, also in connection with cooperative game theory and systemic risk. Although static capital allocation rules have been extensively studied in the recent years, only few works deal with dynamic capital allocations and its relation with BSDEs. Moreover, all those works only examine the case of an underneath risk measure satisfying cash-additivity and, moreover, a large part of them focuses on the specific case of the gradient allocation where Gateaux differentiability is assumed.

The main goal of this paper is, instead, to study \textit{general dynamic capital allocations} associated to \textit{cash-subadditive} risk measures, generalizing the approaches already existing in the literature and motivated by the presence of (ambiguity on) interest rates. Starting from an axiomatic approach, we then focus on the case where the underlying risk measures are induced by BSDEs whose drivers depend also on the $y$-variable. In this setting, we surprisingly find that the corresponding capital allocation rules solve special kinds of Backward Stochastic Volterra Integral Equations (BSVIEs).
\end{abstract}

\keywords{risk measures; capital allocation; BSDE; BSVIE; cash-subadditivity; subdifferential}

\section{Introduction}

Starting from the seminal paper of Artzner et al. \cite{ArDeEbHe99}, a wide literature on Mathematical Finance has been devoted to the theory of risk measures that have been introduced and studied - both from an axiomatic and an empirical point of view- in order to quantify the riskiness of financial exposures in a static and in a dynamic setting. See \cite{ArDeEbHe99,bion1,delb,drapeau-kupper,foellmer,puttingorder,Bion,CerreiaVioglio,Frittelli2,det,ElRa}, among many others, for an axiomatic treatment of the topic. In the context of dynamic risk measures, the relation between risk measures and backward equations have been deeply analysed: firstly, for Backward Stochastic Differential Equations (BSDEs) in \cite{BaEl,penalty,DN-RG,ElRa,viag}; and, recently, for Backward Stochastic Volterra Integral Equations (BSVIEs) in \cite{Nacira,DN-RG,Yong2007}. We recall that, while BSDEs date back to  \cite{pardoux,peng-97} and their applications to Mathematical Finance are well known in the literature (see \cite{EPQ}), BSVIEs were introduced for the first time by \cite{Yong2006} and recently investigated also in view of their applications.

During the last years and in the context of risk measures, an increasing number of studies has been focused on the capital allocation problem that, roughly speaking, consists of finding a ``fair'' way (i.e. satisfying some suitable criteria) to divide into the different components of the risk the margin to be deposited because of the risk exposure. To be more concrete, with different components we can think at different sub-units which a risky position is formed of (e.g. a portfolio composed by different assets or a firm formed by different business lines or branches).
On the capital allocation problem, several papers deal with an axiomatic approach in a static framework and with its relation with cooperative game theory (see \cite{centrone,delb,delnaut,Kalkbrener,Tasche,tsanakas} and the references therein). Some recent works, instead, focus also on capital allocation rules (CARs) in the dynamic case, even if several of those restrict their attention mainly on the gradient allocation (see, e.g.,  \cite{boonen,KO,KO1,tsanakas1}). Furthermore, quite recently, the strong relation between risk measures and BSDEs led to a natural extension of the concept of a capital allocation rule to a dynamic environment by means of BSDEs. In particular, \cite{KO} has proved that the dynamic gradient capital allocation of a dynamic risk measure induced by a BSDE follows a BSDE. Moreover, \cite{MastroRos} gives an organic and axiomatic treatment of \textit{general} dynamic CARs as well as a general construction of dynamic CARs by means of BSDEs.

It is worth emphasizing that all the aforementioned papers deal with capital allocation rules associated to (coherent or convex) cash-additive risk measures. As argued by \cite{ElRa}, however, in the presence of stochastic interest rate and/or ambiguity on the interest rate the axiom of cash-additivity should be relaxed, e.g., with the so-called cash-subadditivity. See also \cite{CerreiaVioglio,drapeau-kupper,Frittelli2,MastroRos1,MastroRos2} for the impact of non cash-additivity on risk measures and for a deeper discussion on cash-subadditivity versus cash-additivity.

Motivated by the importance of studying general dynamic CARs and by the financial need of weakening cash-subadditivity, our aim is here to develop an \textit{axiomatic treatment of CARs based on dynamic risk measures that satisfy cash-subadditivity} but not necessarily Gateaux differentiability. In order to pursue this objective, we generalize the approach followed in \cite{MastroRos}, covering the case of cash-subadditivity. Differently from \cite{MastroRos1,KO1,KO} where the authors proved that for cash-additive risk measures induced by BSDEs (or BSVIEs) the corresponding capital allocations follow still a BSDE (or a BSVIE), in this paper we prove that the subdifferential CAR associated to a cash-subadditive risk measure induced via a BSDE follows a BSVIE. More in general, we introduce a \textit{general technique to build capital allocations following BSVIEs}, similarly as done for cash-additive risk measures in \cite{MastroRos,MastroRos1},
and we provide a link between dynamic cash-subadditive risk measures following a BSDEs and capital allocations based on BSVIEs. This new approach will be shown to cover relevant examples of capital allocation principles, such as the gradient, the Aumann-Shapley, and the marginal ones.  \medskip

The paper is organized as follows. In Section \ref{sec: prelim} we give a short review on dynamic risk measures, capital allocation rules, BSDEs, and BSVIEs. Section \ref{sec: CAR and csa} contains our main results. Firstly, we provide a one-to-one correspondence between dynamic cash-subadditive convex risk measures and capital allocations rules satisfying some further properties, by generalizing a similar result proved in \cite{MastroRos} for the cash-additive case. Furthermore, focusing
   on CARs whose underlying cash-subadditive risk measures are driven by BSDEs,
 we show that the subdifferential CAR follows a BSVIE. More in general, we provide a natural way to build different CARs whose dynamics follow a BSVIE with a driver that is related to that of the associated risk measure. Section \ref{sec: examples} contains some examples based on marginal capital allocations and on the entropic risk measure. Finally, in Section \ref{sec: conclus} some conclusions and final remarks are provided.

\section{Short review and preliminary results} \label{sec: prelim}
In this section we fix some notations and we recall the notions that will be used in the following. We start fixing some relevant notations used throughout the paper, then we recall fundamental properties of dynamic risk measures. Furthermore, we provide an insight into the main results regarding BSDEs and BSVIEs, proving some new and useful results for our specific purposes.\medskip


Let $T>0$ be a future time horizon and consider a filtered probability space $(\Omega,\mathcal{F},(\mathcal{F}_t)_{t\in[0,T]},\mathbb{P})$ with $\mathcal{F}_0=\{\emptyset,\Omega\}$ and $\mathcal{F}_T=\mathcal{F}$. In the sequel, any equality and inequality has to be understood $\mathbb{P}$-almost surely (shortly, a.s.), unless otherwise specified.

Let $L^p(\mathcal{F}_t)$, with $p \in [1,+\infty)$, denote the space formed by all ($\mathbb{R}^n$-valued) random variables on $(\Omega, \mathcal{F},\mathbb{P})$ that are $\mathcal{F}_t$-measurable and such that $\mathbb{E}\left[|X|^p\right]<+\infty$, while  $L^{\infty}(\mathcal{F}_t)$ the space formed by all ($\mathbb{R}^n$-valued) random variables that are $\mathcal{F}_t$-measurable and essentially bounded. Furthermore, $L^p_+(\mathcal{F}_t)$, with $p \in [1,+\infty]$, will denote the subset of $L^p(\mathcal{F}_t)$ of random variables that are, $\mathbb{P}$-a.s., greater than or equal to $0$.

\subsection{Dynamic risk measures}

We recall that static risk measures have been introduced to quantify \textit{now} the riskiness of financial instruments (or, more precisely, of their profits and losses or returns) ``expiring'' at a given time horizon $T$, while dynamic risk measures are developed to quantify such riskiness \textit{at any time} between now and the maturity $T$.
To be more precise, a \textit{static risk measure} on $L^{\infty}$ is a functional $\rho: L^{\infty}(\mathcal{F}_T) \to \mathbb{R}$ satisfying some further financially reasonable axioms, while a \textit{dynamic risk measure}
is a family $(\rho_t)_{t \in [0,T]}$, with $\rho_t:L^{\infty}(\mathcal{F}_T)\to L^{\infty}(\mathcal{F}_t)$, such that $\rho_0$ is a static risk measure and $\rho_T(X)=-X$ for any $X \in L^{\infty}(\mathcal{F}_T)$. Note that any $X \in L^{\infty}(\mathcal{F}_T)$ has to be understood as the profit and loss of a financial position (where positive values stand for profits, while negative for losses).

For a comprehensive literature on risk measures, we address the reader to \cite{ArDeEbHe99,delb,foellmer,puttingorder,bion1,BaEl,CerreiaVioglio,Frittelli2,det,ElRa,viag}, among many others.\medskip

Here below we collect a non exhaustive list of desirable axioms that may be assumed for (dynamic) risk measures:\medskip

\noindent - \textit{monotonicity}: If $X\geq Y$, with $X,Y\in L^{\infty}{(\mathcal{F}_T)}$, then $\rho_t(X)\leq \rho_t(Y)$ for any $t\in[0,T]$.\smallskip

\noindent - \textit{convexity}: $\rho_t(\alpha X + (1-\alpha)Y) \leq \alpha \rho_t(X) + (1-\alpha) \rho_t(Y)$ for any $\alpha \in [0,1], X,Y \in L^{\infty}(\mathcal{F}_T), t\in[0,T]$.\smallskip

\noindent - \textit{cash-additivity}: $\rho_t(X+m_t)=\rho_t(X)-m_t$ for any $m_t\in L^{\infty}{(\mathcal{F}_t)}, X\in L^{\infty}{(\mathcal{F}_T)}, t\in[0,T]$.\smallskip

\noindent - \textit{cash-subadditivity}: $\rho_t(X+m_t)\geq\rho_t(X)-m_t$ for any $m_t\in L^{\infty}_+{(\mathcal{F}_t)},X\in L^{\infty}{(\mathcal{F}_T)}, t\in[0,T]$.\smallskip

\noindent - \textit{time-consistency}: $\rho_s(X)=\rho_s(-\rho_t(X))$ for any $X\in L^{\infty}{(\mathcal{F}_T)}, 0\leq s\leq t \leq T$.\smallskip

\noindent - \textit{weak time-consistency}: $\rho_s(X)\leq \rho_s(-\rho_t(X))$ for any $X\in L^{\infty}(\mathcal{F}_T), 0\leq s\leq t\leq T$.\smallskip

\noindent - \textit{normalization}: $\rho_t(0)=0$ for any $t \in [0,T]$.\smallskip

\noindent - \textit{continuity from below (resp. above)}: for any sequence $(X_n)_{n\in\mathbb{N}}\subseteq L^{\infty}(\mathcal{F}_T)$ such that $X_n\uparrow X$ a.s. (resp. $X_n\downarrow X$ a.s.) with $X\in L^{\infty}(\mathcal{F}_T)$, it holds that $\rho_t(X_n)\downarrow\rho_t(X)$ a.s. (resp. $\rho_t(X_n)\uparrow\rho_t(X)$ a.s.).
\bigskip

\noindent In particular, a dynamic risk measure will be said to be convex if it satisfies monotonicity, convexity and normalization.\smallskip

(Decreasing) Monotonicity, convexity, normalization, and continuity from below/above are quite commonly assumed for risk measures. In particular, convexity is related to diversification of risk, while monotonicity to the interpretation of risk measures where the riskier is the financial position $X$, the greater is the value of $\rho(X)$. The riskiness of having zero profit and loss is set to be $0$ when normalization holds.
Cash-additivity, instead, guarantees that, if we add some cash (or some amount that is known at time $t$) to the financial position $X$, then the riskiness of the position is diminished exactly of the cash amount. Somehow, then, cash-additivity assumes zero interest rate and provides the financial interpretation of risk measures in terms of capital requirements (or margins). As argued in \cite{ElRa}, however, in presence of stochastic interest rate or ambiguity on interest rates it should be financially reasonable to replace cash-additivity with cash-subadditivity.

Finally, time-consistency and weak time-consistency provide the inter-temporal behavior of risk measures. While time-consistency means that at any time $s$ it is indifferent to evaluate the riskiness of a financial position $X$ in one step or in two (or more) steps backwards, weak time-consistency assumes that the riskiness measured in one step is lower than that evaluated in more steps.\smallskip

We recall from \cite{Bion} that monotonicity together with cash-additivity implies\medskip

\noindent - \textit{regularity}:
$\rho_t(X\mathbb{I}_A+Y\mathbb{I}_{A^C})=\rho_t(X)\mathbb{I}_A+\rho_t(Y)\mathbb{I}_{A^C}$ for any $t\in[0,T], A\in\mathcal{F}_t, X,Y\in L^{\infty}(\mathcal{F}_T)$.\medskip

\noindent It can be proved that also cash-subadditivity and monotonicity are enough to ensure regularity.\smallskip

Concerning dual representations of risk measures, it is well known that:\medskip

\noindent - any dynamic convex risk measure on  $L^{\infty}(\mathcal{F}_T)$ satisfying \textit{cash-additivity} and continuity from below has the following dual representation (see \cite{det}):
\begin{equation}
    \rho_t(X)=\essmax_{\mathbb{Q}_t\in\mathcal{Q}} \, \{\mathbb{E}_{\mathbb{Q}_t}[-X|\mathcal{F}_t]-c_t(\mathbb{Q}_t)\},
    \label{representationCARM}
\end{equation}
where, for any $t \in [0,T]$,
$$\mathcal{Q}\triangleq\{\mathbb{Q}_t \mbox{ probability measures on } (\Omega,\mathcal{F}): \mathbb{Q}_t\ll\mathbb{P} \mbox{ and } \mathbb{Q}_t|_{\mathcal{F}_t}=\mathbb{P}|_{\mathcal{F}_t}\}$$
and $c_t$ is a non-negative $\mathcal{F}_t$-measurable penalty function;\smallskip

\noindent- any dynamic convex risk measure  $L^{\infty}(\mathcal{F}_T)$ satisfying \textit{cash-subadditivity} and continuity from above has the following dual representation (see \cite{MastroRos}):
\begin{equation}
\rho_t(X)=\esssup_{(D_t,\mathbb{Q}_t)\in\mathcal{D}\times\mathcal{Q}}\{D_t\mathbb{E}_{\mathbb{Q}_t}[-X|\mathcal{F}_t]-c_t(D_t \mathbb{Q}_t)\},
    \label{representationSRM}
\end{equation}
where $$\mathcal{D}\triangleq\{(D_t)_{t\in[0,T]} \mbox{ adapted processes w.r.t. } ({\mathcal{F}_t})_{t\in[0,T]}: D_t \in [0,1] \, \mbox{ for every } t\in[0,T], \mbox{ a.s.}\}$$
can be interpreted as a set of discounting factors, while $c_t(D_t \mathbb{Q}_t)$ is a non-negative $\mathcal{F}_t$-measurable random variable which plays the role of a generalized penalty function.\medskip

Henceforth, we will always impose that the essential supremum in \eqref{representationSRM} is attained at some $(\bar{D}_t,\bar{\mathbb{Q}}_t)\in\mathcal{D}\times\mathcal{Q}$, i.e.
\begin{equation*}
(\bar{D}_t,\bar{\mathbb{Q}}_t)\in\argessmax_{(D_t,\mathbb{Q}_t)\in\mathcal{D}\times\mathcal{Q}} \, \{D_t\mathbb{E}_{\mathbb{Q}_t}[-X|\mathcal{F}_t]-c_t(D_t \mathbb{Q}_t)\}.
\end{equation*}
In such a case, $(\bar{D}_t,\bar{\mathbb{Q}}_t)$ is referred to as an optimal scenario and
\begin{equation}
    \rho_t(X)=\bar{D}_t \mathbb{E}_{\bar{\mathbb{Q}}_t}[-X|\mathcal{F}_t]-c_t(\bar{D}_t \bar{\mathbb{Q}}_t).
    \label{representation2}
\end{equation}

The following result guarantees that this holds true for cash-subadditive convex risk measures satisfying continuity from below, similarly as for the cash-additive case.

\begin{proposition}\label{essmaxCARM}
If $\rho_t:L^{\infty}(\mathcal{F}_T)\to L^{\infty}(\mathcal{F}_t)$ is a dynamic convex cash-subadditive risk measure satisfying continuity from below, then
\begin{equation}
\rho_t(X)=\essmax_{(D_t,\mathbb{Q}_t)\in \mathcal{D}\times\mathcal{Q}} \,\left\{\mathbb{E}_{\mathbb{Q}_t}\left[-D_tX\vert\mathcal{F}_t\right]-c_t(D_t\mathbb{Q}_t)\right\}
        =\mathbb{E}_{\bar{\mathbb{Q}}_t}\left[-\bar{D}_tX\vert\mathcal{F}_t\right]-c_t(\bar{D}_t\bar{\mathbb{Q}}_t)
        \label{CFBrepresentation}
\end{equation}
for some $(\bar{D}_t,\bar{\mathbb{Q}}_t)\in\mathcal{D}\times\mathcal{Q},$ where $c_t$ is the minimal penalty term, i.e.
\begin{equation}
    c_t(D_t\mathbb{Q}_t)\triangleq\esssup_{X\in L^{\infty}(\mathcal{F}_T)}\left\{D_t\mathbb{E}_{\mathbb{Q}_t}[-X\vert\mathcal{F}_t]-\rho_t(X)\right\}.
    \label{minpen}
\end{equation}
\end{proposition}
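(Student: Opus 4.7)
The plan is to adapt, to the cash-subadditive setting of representation \eqref{representationSRM}, the argument used by Detlefsen-Scandolo and Föllmer-Schied in the cash-additive case: continuity from below is strictly stronger than continuity from above and provides the compactness on penalty sublevel sets needed to upgrade $\esssup$ to $\essmax$. As a first step, I would verify that a monotone convex cash-subadditive risk measure on $L^{\infty}$ that is continuous from below is automatically continuous from above. For $X_n \downarrow X$ in $L^{\infty}(\mathcal{F}_T)$, setting $Y_n := X_1 - X_n \uparrow X_1 - X$ in $L^{\infty}_+$, one combines monotonicity with the cash-subadditivity inequality applied to the $\mathcal{F}_t$-measurable bounds $\|Y_n - (X_1 - X)\|_\infty$ to sandwich $\rho_t(X_n)$ between $\rho_t(X)$ and a term that vanishes thanks to continuity from below on $(-X_n)$. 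Consequently \eqref{representationSRM} from [MastroRos] applies with the minimal penalty $c_t$ as in \eqref{minpen}, and it only remains to prove attainment.

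Fix $X \in L^{\infty}(\mathcal{F}_T)$. The family $\{D_t\mathbb{E}_{\mathbb{Q}_t}[-X|\mathcal{F}_t]-c_t(D_t\mathbb{Q}_t):(D_t,\mathbb{Q}_t)\in\mathcal{D}\times\mathcal{Q}\}$ is upward directed: given two pairs, a standard pasting argument on $\mathcal{F}_t$-measurable sets produces a third pair in $\mathcal{D}\times\mathcal{Q}$ whose dual value dominates the maximum of the two (here one uses regularity, which follows from monotonicity plus cash-subadditivity as recalled earlier in the section). Hence one can pick a maximizing sequence $(D^n_t,\mathbb{Q}^n_t)$ whose associated values increase a.s.\ to $\rho_t(X)$. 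Since $D^n_t \in [0,1]$ and, along this sequence, $c_t(D^n_t\mathbb{Q}^n_t)$ stays a.s.\ bounded (being controlled by $\|X\|_\infty-\rho_t(X)$), the densities $Z^n_t:=\mathrm d\mathbb{Q}^n_t/\mathrm d\mathbb{P}$ belong to an $L^1$-bounded convex sublevel set of $c_t$. A Komlós-type lemma then produces a subsequence of convex combinations of the products $D^n_t Z^n_t$ converging $\mathbb{P}$-a.s.\ to some limit, which by convexity of $\mathcal{D}\times\mathcal{Q}$ can be written as $\bar D_t \bar Z_t$ with $\bar D_t \in \mathcal{D}$ and $\bar Z_t$ the density of some $\bar{\mathbb{Q}}_t \in \mathcal{Q}$. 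Convexity (in the pair) of the dual functional and lower semi-continuity of $c_t$ under $\mathbb{P}$-a.s.\ convergence of $D_t\mathrm d\mathbb{Q}_t/\mathrm d\mathbb{P}$, which follows from \eqref{minpen} together with continuity from below of $\rho_t$, let one pass to the limit in the maximizing sequence and conclude $\rho_t(X) \le \bar D_t\mathbb{E}_{\bar{\mathbb{Q}}_t}[-X|\mathcal{F}_t]-c_t(\bar D_t\bar{\mathbb{Q}}_t)$; the reverse inequality is immediate from the $\esssup$ representation.

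The main obstacle will be the Komlós extraction for the \emph{coupled} pair $(D_t,\mathbb{Q}_t)$ and the identification of the limit as a bona-fide element of $\mathcal{D}\times\mathcal{Q}$. Unlike the cash-additive case, where one only deals with a single probability density, here the object carrying financial meaning is the product $D_t\,\mathrm d\mathbb{Q}_t/\mathrm d\mathbb{P}$, and the decomposition of an a.s.\ limit into a $[0,1]$-valued factor and a unit-mass density is not canonical. This is the point where one needs to argue on the product itself and invoke both the convex stability of $\mathcal{D}\times\mathcal{Q}$ under product-mixing and the lower semi-continuity of the minimal penalty provided by continuity from below.
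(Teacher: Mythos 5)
Your overall strategy---proving attainment directly by extracting a limit from a maximizing sequence---is genuinely different from the paper's, which instead reduces to the \emph{static} risk measure $\rho_{0,t}(X)\triangleq\mathbb{E}[\rho_t(X)]$, invokes the known max-representation over sub-probabilities for static convex risk measures that are continuous from below (via Theorem 4.3 of El Karoui--Ravanelli), decomposes the static maximizer $\bar\mu$ as $\bar D_t\bar{\mathbb{Q}}_t$ using Proposition 6 of Mastrogiacomo--Rosazza Gianin, and then concludes by observing that the nonnegative gap $\rho_t(X)-\bigl(\bar D_t\mathbb{E}_{\bar{\mathbb{Q}}_t}[-X|\mathcal{F}_t]-c_t(\bar D_t\bar{\mathbb{Q}}_t)\bigr)$ has zero $\mathbb{P}$-expectation and hence vanishes a.s. That route avoids every compactness issue you are wrestling with. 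As written, your argument has gaps that are not merely technical.

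First, the identification of the Komlós limit is not resolved: a $\mathbb{P}$-a.s.\ limit of convex combinations of the products $D^n_t Z^n_t$ is (by Fatou) a sub-probability density, but ``convexity of $\mathcal{D}\times\mathcal{Q}$'' does not let you write it as $\bar D_t\bar Z_t$ with $\bar D_t$ adapted and $[0,1]$-valued and $\bar{\mathbb{Q}}_t|_{\mathcal{F}_t}=\mathbb{P}|_{\mathcal{F}_t}$; the set of such products is not closed under convex combination in any obvious way. What you need is precisely the decomposition result for sub-probabilities with \emph{finite minimal penalty} (Proposition 6 of Mastrogiacomo--Rosazza Gianin), and the finiteness of the penalty of the limit is exactly what your lower semi-continuity step is supposed to deliver---so the argument as sketched is circular at this point. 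Second, $\mathbb{P}$-a.s.\ convergence of the densities does not give convergence of $\mathbb{E}[-X\,D^n_tZ^n_t|\mathcal{F}_t]$, nor lower semi-continuity of $c_t$, without uniform integrability; your assertion that l.s.c.\ of $c_t$ under a.s.\ convergence ``follows from continuity from below'' is not substantiated, and indeed the standard mechanism by which continuity from below enters such arguments is weak $L^1$-compactness (uniform integrability) of the penalty sublevel sets, \`a la Jouini--Schachermayer--Touzi, which you never establish. Finally, your preliminary step (continuity from below implies continuity from above) relies on controlling $\|X_n-X\|_\infty$, which does not tend to zero for a general monotone a.s.\ convergent sequence in $L^\infty$. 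Each of these could in principle be repaired, but the repairs amount to importing exactly the static attainment and decomposition machinery that the paper's two-line reduction already exploits.
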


\begin{proof}
Let $\rho_t$ be a dynamic convex, cash-subadditive, and continuous from below risk measure. It is then easy to check that
$\rho_{0,t}=\mathbb{E}[\rho_t(X)]$ is a static risk measure with the same properties. According to the proof of Theorem 4.3 in \cite{ElRa} and to the representation of convex static risk measures that are continuous from below (see \cite{BaEl} or \cite{foellmer,Frittelli2} for a thorough discussion), it follows that
\begin{equation}
    \rho_{0,t}(X)=\max_{\mu\in\mathcal{S}}\left\{ \mathbb{E}_{\mu}(-X)-c_{0,t}(\mu) \right\}=\mathbb{E}_{\bar{\mu}}(-X)-c_{0,t}(\bar{\mu}),
\end{equation}
where $c_{0,t}$ is the minimal penalty function of $\rho_{0,t}$ and $\mathcal{S}$ is the set formed by all sub-probabilities $\mu$ on $(\Omega,\mathcal{F})$ that are absolutely continuous w.r.t. $\mathbb{P}$. We recall (see \cite{ElRa} for further details) that with sub-probability it is meant a measure $\mu: (\Omega,\mathcal{F}) \to [0,1]$ such that $0 \leq \mu(\Omega) \leq 1$.

From Proposition 6 in \cite{MastroRos1}, the sub-probability $\bar{\mu}$ can be decomposed as $\bar{\mu}=\bar{D}_t\bar{\mathbb{Q}}_t$ with $(\bar{D}_t,\bar{\mathbb{Q}}_t)\in\mathcal{D}\times\mathcal{Q}$ whenever $c_{0,t}(\bar{\mu})<+\infty$; moreover, $c_{0,t}(\bar{D}_t\bar{\mathbb{Q}}_t)=\mathbb{E}[c_t(\bar{D}_t\bar{\mathbb{Q}}_t)]$. Hence
\begin{equation}
\rho_{0,t}(X)=\mathbb{E}[\rho_t(X)]=\mathbb{E}[{\bar{D}_t\mathbb{E}_{\bar{\mathbb{Q}}_t}}[-X\vert\mathcal{F}_t]-c_t(\bar{D}_t\bar{\mathbb{Q}}_t)]. \label{eq: eqstar000}
\end{equation}
The arguments above imply that
$$\rho_t(X)=\esssup_{(D_t,\mathbb{Q}_t)\in\mathcal{D}\times\mathcal{Q}}\left\{D_t\mathbb{E}_{\mathbb{Q}_t}{[-X\vert\mathcal{F}_t]-c_t(D_t\mathbb{Q}_t)}\right\}\geq \bar{D}_t\mathbb{E}_{\bar{\mathbb{Q}}_t}[-X\vert\mathcal{F}_t]-c_t(\bar{D}_t\bar{\mathbb{Q}}_t),$$
hence
\begin{equation*}
\rho_t(X)-\Big(\bar{D}_t\mathbb{E}_{\bar{\mathbb{Q}}_t}[-X\vert\mathcal{F}_t]-c_t(\bar{D}_t\bar{\mathbb{Q}}_t)\Big)\geq 0
\end{equation*}
and, by \eqref{eq: eqstar000},
\begin{equation*}
\mathbb{E}\left[\rho_t(X)-\Big(\bar{D}_t\mathbb{E}_{\bar{\mathbb{Q}}_t}[-X\vert\mathcal{F}_t]-c_t(\bar{D}_t\bar{\mathbb{Q}}_t)\Big)\right]=0.
\end{equation*}
The thesis then follows.
\end{proof}

In full generality, we say that a cash-subadditive convex risk measure is representable if
\begin{equation}
\rho_t(X)= \esssup_{(D_t,\mathbb{Q}_t)\in\mathcal{D}'\times\mathcal{Q}} \left\{ \mathbb{E}_{\mathbb{Q}_t}[-D_tX|\mathcal{F}_t]-c_t(D_t \mathbb{Q}_t) \right\}
    \label{representation1}
\end{equation}
for some $D_t\in\mathcal{D}'$, where $\mathcal{D}'$ is formed by all (not necessarily adapted) stochastic processes $(D_t)_{t\in[0,T]}$ that are valued in $[0,1]$.

\subsection{Capital allocation rules}

In this section, we briefly recall the capital allocation problem, in a static and dynamic setting, and with an axiomatic approach (see, e.g., \cite{boonen,canna,lecturenotes,delnaut,Kalkbrener,KO2,Tasche,tsanakas1}). The first axiomatic approach is given by Kalkbrener \cite{Kalkbrener} is a static setting, while the dynamic setting has been firstly studied in \cite{KO} for risk measures induced via BSDEs and in the case of gradient allocation. Nevertheless, the first organic work about (general) dynamic capital allocations is provided in Mastrogiacomo and Rosazza Gianin \cite{MastroRos} where the approach of \cite{Kalkbrener} is generalized to a dynamic setting.\smallskip

According to the classical literature, a static capital allocation rule consists of deciding how to divide the capital requirement (or margin) of an aggregate risky position $X$ into the sub-portfolios (also called business lines) which $X$ is formed of, according to some financially sound criteria. To be more concrete, given a static risk measure $\rho$, an aggregate position $X$ and its business lines $X_1, ...,X_n$ (hence such that $X= X_1+...+X_n$), a capital allocation rule prescribes how to share $\rho(X)$ into $X_1,...,X_n$ by allocating the capital $k_i$ to $X_i$ such that $\rho(X)=\sum_{i=1}^{n}k_i$ (when the so called full allocation of the capital allocation principle is fulfilled).

Starting from those considerations, Kalkbrener \cite{Kalkbrener} introduced the notion of capital allocation rule (CAR, for short) associated to a risk measure $\rho$ as a functional $\Lambda:L^{\infty}(\mathcal{F}_T)\times L^{\infty}(\mathcal{F}_T) \to \mathbb{R}$ such that $\Lambda(X,X)=\rho(X)$ for any $X \in L^{\infty}$.  In other words, the capital to be allocated to the whole aggregate portfolio $X$ (as a stand-alone portfolio) coincides with the riskiness of $X$ evaluated via the static risk measure $\rho$. Furthermore,  $\Lambda(Y,X)$ should be understood as the capital to be allocated to $Y$ as a sub-portfolio of $X$.
\smallskip

We now recall the definition of a dynamic CAR. 

\begin{definition}[see \cite{MastroRos}]
Let $(\rho_t)_{t\in[0,T]}$ be a dynamic risk measure.

A family of functionals $\Lambda_t:L^{\infty}(\mathcal{F}_T)\times L^{\infty}(\mathcal{F}_T) \to  L^{\infty}(\mathcal{F}_t)$, with $t\in[0,T]$, is said to be a dynamic CAR associated to the risk measure ${(\rho_t)}_{t\in[0,T]}$ if $$\Lambda_t(X,X)=\rho_t(X), \mbox{ for any } t \in [0,T] \mbox{ and }  X\in L^{\infty}(\mathcal{F}_T).$$
Instead, the family ${(\Lambda_t)}_{t\in[0,T]}$ is called \textit{audacious} CAR if $\Lambda_t(X,X)\leq\rho_t(X)$, for any $t\in[0,T]$ and $X\in L^{\infty}(\mathcal{F}_T)$.
\label{DEFCAR}
\end{definition}

In other words, while a dynamic CAR requires that the capital to be allocated to $X$ as a stand-alone portfolio should be equal to the margin $\rho_t(X)$, an audacious one only asks to allocate not more than the margin (see \cite{centrone,MastroRos} and the references therein for a discussion):

In the sequel, when no confusion may occur, we identify the family of functionals ${(\Lambda_t)}_{t\in[0,T]}$ with an element $\Lambda_t$.
Here below, we provide a (non exhaustive) list of axioms that could be sometimes imposed to a dynamic CAR. See \cite{MastroRos} and the references therein for more details.\medskip

\noindent - \textit{monotonicity:} if $X\geq Y$ (with $X,Y\in L^{\infty}(\mathcal{F}_T)$), then $\Lambda_t(X,Z)\leq \Lambda_t(Y,Z)$ for any $t\in [0,T]$, $Z \in L^{\infty}(\mathcal{F}_T)$.

\noindent - \textit{no-undercut}: $\Lambda_t(X,Y)\leq \Lambda_t(X,X)$ for any $t\in[0,T]$, $X,Y\in L^{\infty}(\mathcal{F}_T)$.

\noindent - \textit{cash-additivity}: $\Lambda_t(X+m_t,Y+m_t)=\Lambda_t(X,Y)-m_t$, for any $t\in[0,T]$, $m_t\in L^{\infty}(\mathcal{F}_t),  X,Y \in L^{\infty}(\mathcal{F}_T)$.

\noindent - \textit{cash-subadditivity}: $\Lambda_t(X+m_t,Y+m_t)\geq\Lambda_t(X,Y)-m_t$ for any $t\in[0,T]$, $m_t\in L_+^{\infty}(\mathcal{F}_t),  X,Y \in L^{\infty}(\mathcal{F}_T)$.

\noindent - \textit{1-cash-additivity}: $\Lambda_t(X+m_t,Y)=\Lambda_t(X,Y)-m_t$ for any $t\in[0,T]$, $m_t\in L^{\infty}(\mathcal{F}_t),  X,Y \in L^{\infty}(\mathcal{F}_T)$.

\noindent - \textit{1-cash-subadditivity}: $\Lambda_t(X+m_t,Y)\geq\Lambda_t(X,Y)-m_t$ for any $t\in[0,T]$, $m_t\in L^{\infty}_+(\mathcal{F}_t), X,Y \in L^{\infty}(\mathcal{F}_T)$.

\noindent - \textit{normalization}: $\Lambda_t(0,X)=0$ for any $t\in[0,T], X\in L^{\infty}(\mathcal{F}_T)$.

\noindent - \textit{full allocation}: $\sum_{i=1}^{n} \Lambda_t \left(X_i,X\right)=\Lambda_t(X,X)$ for any $t \in [0,T]$ and $(X_i)_{i=1,\dots, n},X\in L^{\infty}(\mathcal{F}_T)$ with $\sum_{i=1}^n X_i=X$.\medskip

We shortly recall the financial interpretation and motivation of the previous axioms (see, for instance, \cite{delnaut,centrone,Kalkbrener,MastroRos,Tasche,tsanakas1} for a deeper discussion). Firstly, monotonicity means that the riskier a position is, the greater is the capital allocated needed to cover its risk. No-undercut property guarantees that the risk of any sub-portfolio $X$ of $Y$ cannot exceed the total risk of portfolio $X$ seen as a stand-alone portfolio, implying therefore that there is no incentive to split $X$ from $Y$ (in the terminology of Tsanakas \cite{tsanakas}). Cash-additivity conveys the principle according to which any addition of 'known' (or, better, $\mathcal{F}_t$-measurable) risk $m_t$ to both the sub-unit $X$ and the position $Y$ yields a decrement of the risk exactly equal to $m_t$. Cash-subadditivity substitutes cash-additivity in the case of the presence of ambiguous rate interest in the market (in the same spirit of El Karoui and Ravanelli \cite{ElRa} for risk measures). Similar interpretations hold for 1-cash-additivity and 1-cash-subadditivity. Normalization requires that the risk associated to an initial null sub-portfolio remains equal to zero at every time $t\in[0,T]$. The full allocation requirement seems to be quite natural in the context of capital allocation rules because it implies that the total contribution to the risk of the sum of every sub-portfolio is exactly equal to the risk of the overall position. However, as from \cite{Kalkbrener}, full allocation and no-undercut properties could be incompatible for capital allocations rules when the underlying risk measure is strictly convex and not coherent. Nevertheless, when a firm is only interested in monitoring its financial activities, full allocation can be replaced by one of the following axioms:\medskip

\noindent - \textit{sub-allocation}: for any $t \in [0,T]$,
$$\Lambda_t \left(X,X \right) \geq \sum_{i=1}^{n} \Lambda_t \left(X_i,X \right)$$
holds for any $(X_i)_{i=1,\dots, n},X\in L^{\infty}(\mathcal{F}_T)$ with $\sum_{i=1}^n X_i=X$.

\noindent - \textit{weak-convexity}: for any $t \in [0,T]$,
$$\Lambda_t \left(\ds\sum_{i=1}^n a_iX_i,X \right)\leq \ds\sum_{i=1}^n a_i\Lambda_t(X_i,X)$$
for any $X, X_i\in L^{\infty}(\mathcal{F}_T),a_i\in[0,1]$, for $i=1,\dots,n$, with $\sum_{i=1}^na_i=1$ and $\sum_{i=1}^na_iX_i=X$.\medskip

In particular, for sub-allocation, the difference between the capital allocated to cover the risk of all sub-portfolios and the capital stored to insure the risk of the full portfolio can be understood as an extra-security margin justified, e.g., by some extra-costs shared among all the sub-portfolios (see also the discussion in \cite{brun,centrone,canna}).
Weak-convexity, instead, reflects the diversification of risk that encourages the sharing of risk between different assets.

Finally, as for risk measures, also for dynamic capital allocations rules it is relevant to investigate time-consistency. The following two formulations of time-consistency have been introduced in \cite{MastroRos}:\medskip

\noindent - \textit{1-time-consistency}: $\Lambda_s(-\Lambda_t(X,Y),Y)=\Lambda_s(X,Y)$ for any $0\leq s \leq t \leq T,  X,Y\in L^{\infty}(\mathcal{F}_T)$.\smallskip

\noindent - \textit{time-consistency}:
   $\Lambda_s(-\Lambda_t(X,Y),-\Lambda_t(Y,Y))=\Lambda_s(X,Y)$ for any $0\leq s \leq t \leq T, X,Y\in L^{\infty}(\mathcal{F}_T)$.\medskip

Such formulations guarantee that, roughly speaking, it is indifferent to compute the capital allocation with a one-step procedure or proceeding backwards in time with multiple steps (see \cite{MastroRos} for a detailed discussion).

\subsection{BSDEs and BSVIEs}

We shortly review now the main definitions and results on Backward Stochastic Differential Equations (BSDEs) and Backward Stochastic Volterra Integral Equations (BSVIEs) used in the sequel. On the probability space $(\Omega,\mathcal{F},\mathbb{P})$, consider a $n$-dimensional Brownian motion $(B_t)_{t\in[0,T]}$ with its augmented natural filtration $(\mathcal{F}_t)_{t\in[0,T]}$.

It is well known (see \cite{revuz-yor}) that, in a Brownian setting, for any probability measure $\mathbb{Q} \ll\mathbb{P}$ there exists a predictable process $(q_t)_{t\in[0,T]}$ such that:
$$\mathbb{E}\left[\frac{d\mathbb{Q}}{d\mathbb{P}}\Bigg| \mathcal{F}_t\right]=\mathcal{E}(q\cdot W)_t \triangleq \exp\left(-\frac{1}{2}\int_0^t|q_s|^2ds-\int_0^tq_sdB_s\right),$$
where $|\cdot|$ stands for the Euclidean norm both in $\mathbb{R}$ and in $\mathbb{R}^n$. Henceforth, any probability measure $\mathbb{Q}\ll\mathbb{P}$ will be identified with its Radon-Nikodym density $\frac{d\mathbb{Q}}{d\mathbb{P}}$.

We fix now some notations we will use in the following. Let $p\in[1,+\infty)$, $0\leq\tau<T$ and $\Delta_{\tau}\triangleq\{(t,s)\in[\tau,T]^2: t\leq s\}$, with the convention  $\Delta_0\triangleq\Delta$. Let us denote
\begin{eqnarray*}
	\hspace{-0.7cm}&&L^p(\Omega,\mathcal{F}_t;L^q(\tau,T))\triangleq\left\{\phi:\Omega\times[\tau,T]\to \mathbb{R}: \phi \mbox{ is } \mathcal{F}_t\otimes\mathcal{B}([\tau,T])\mbox{-meas. and }  \mathbb{E}\left(\int_{\tau}^T|\phi_s|^qds\right)^{\frac{p}{q}}\hspace{-0.1cm}<+\infty\right\}, \\
	\hspace{-0.7cm}&&L^p(\Omega,\mathbb{F};L^q(\tau,T))\triangleq\left\{\phi:\Omega\times[\tau,T]\to \mathbb{R}: \phi \mbox{ is }(\mathcal{F}_t)_{t\in[\tau,T]}\mbox{-adapted and } \mathbb{E}\left(\int_\tau^T|\phi_s|^qds\right)^{\frac{p}{q}}\hspace{-0.1cm}<+\infty \right\},
\end{eqnarray*}
while let $L^q((\tau,T);L^p(\Omega,\mathbb{F};L^m(\tau,T)))$ denote the space of all stochastic processes $\phi:\Omega\times\Delta_{\tau}\to\mathbb{R}$ such that, for any $t\in[\tau,T]$,
$$
\phi(t,\cdot)\mbox{ is } (\mathcal{F}_m)_{m\in[t,T]}-\mbox{adapted and } \int_{\tau}^T\Big\{\mathbb{E}\Big(\int_{t}^T|\phi(t,s)|^mds\Big)^{p/m}\Big\}^{q/p}dt <+\infty.
$$
Furthermore, let us denote
\begin{eqnarray*}
	\hspace{-0.6cm}&&\mathcal{H}^{p,q}(\tau,T)\triangleq L^p(\Omega,\mathbb{F};L^q(\tau,T))\times L^q((\tau,T),\mathcal{F}_t;L^p(\Omega,\mathbb{F};L^2(\tau,T))),
	\\
	\hspace{-0.6cm}&&\mbox{BMO}({\mathbb{P}})\triangleq\left\{\phi\in L^2(\Omega,\mathbb{F};L^2(\tau,T)): \ \exists C>0 \mbox{ s.t., } \forall t\in[\tau,T], \ \mathbb{E}\left[\left.\int_t^T|\phi_s|^2ds \right|\mathcal{F}_t\right]\leq C \mbox{ a.s.}\right\} \\
		\hspace{-0.6cm}&&L^{p}((0,T);\mbox{BMO}({\mathbb{P}}))\triangleq \left\{\phi:\Omega\times\Delta_{\tau}\to\mathbb{R}: \int_{\tau}^T\left\{\left\|\sup_{r\in[t,T]}\mathbb{E}\left[ \left. \int_r^T|\phi(t,s)|^2ds \right| \mathcal{F}_r\right]\right\|_{\infty}\right\}^pdt <+\infty \right\}.	
\end{eqnarray*}

We can also include the case $p=+\infty$, adapting our definitions in the usual way.
To simplify the notation, for $p=q$ we shorten $L^p(\Omega,\mathcal{F}_t;L^p(\tau,T))\triangleq L^p((\tau,T);\mathcal{F}_t)$ and analogously for the other cases.
We observe that if $\phi\in\mbox{BMO}(\mathbb{P})$ then $$\left\|\sup_{t\in[\tau,T]}\mathbb{E}\left[\left. \int_t^T|\phi_s|^2ds \right|\mathcal{F}_t\right]\right\|_{\infty}<+\infty.$$

\subsubsection{BSDEs and related cash-subaddditive risk measures}

Following \cite{EPQ,kob,pardoux,peng-97} and the references therein, a Backward Stochastic Differential Equation (BSDE) is an equation of the following form
\begin{equation}
    Y_t=X+\int_t^Tg(s,Y_s,Z_s)ds-\int_t^TZ_sdB_s,
    \label{BSDE}
\end{equation}
where $X \in L^2(\mathcal{F}_T)$ is the final condition, $g:\Omega\times[0,T]\times\mathbb{R}\times\mathbb{R}^n \to \mathbb{R}$ is called driver of the BSDE, while $(Y_t,Z_t)_{t\in[0,T]}$ are the unknown processes where $Y$ is $\mathbb{R}$-valued and $Z$ is $\mathbb{R}^n$-valued. For simplicity, the dependence on $\omega\in\Omega$ is conventionally often omitted.

We recall the following existence and uniqueness result.
\begin{theorem}[see \cite{pardoux,EPQ}]\label{thm: exist uniq}
Let $X\in L^{2}(\mathcal{F}_T)$ and suppose that $g:\Omega\times[0,T]\times\mathbb{R}\times\mathbb{R}^n\to\mathbb{R}$ satisfies the following assumptions:
\begin{itemize}
    \item[a)] Uniformly Lipschitz: there exists $C>0$ such that $d\mathbb{P}\otimes dt$-a.s.
    $$|g(\omega,t,y_1,z_1)-g(\omega,t,y_2,z_2)|\leq C(|y_1-y_2|+|z_1-z_2|) \quad \mbox{ for any } (y_1,z_1), (y_2,z_2)\in\mathbb{R}\times\mathbb{R}^n;$$
    \item[b)] $g$ is $\mathcal{F}_T\times\mathcal{B}([0,T]\times\mathbb{R}\times\mathbb{R}^n)$-measurable and $g(t,y,z)$ is an $(\mathcal{F}_t)_{t\in[0,T]}$-progressively measurable stochastic process for any $(y,z)\in\mathbb{R}\times\mathbb{R}^n$;
    \item[c)] $g(\cdot,0,0)\in L^{2}((0,T),\mathbb{F})$.
\end{itemize}
Then there exists a unique pair of stochastic processes $(Y_t,Z_t)_{t\in[0,T]}\in L^2((0,T),\mathbb{F})\times L^2((0,T),\mathbb{F})$ satisfying \eqref{BSDE} in It\^o sense. Furthermore, $Y_t$ has continuous paths.
\label{EUBSDE}
\end{theorem}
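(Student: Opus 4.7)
The plan is to follow the classical Pardoux--Peng fixed-point strategy, which works in two stages: first a representation-theorem argument for drivers with no $(y,z)$ dependence, then a contraction on a weighted $L^2$-space to handle the general case.

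As a preparatory step, I would treat the auxiliary problem where the driver is a process $f \in L^2((0,T),\mathbb{F})$ not depending on the unknowns. Introduce the square-integrable martingale
\[
M_t \triangleq \mathbb{E}\!\left[\left. X + \int_0^T f(s)\,ds \,\right|\mathcal{F}_t\right],
\]
and apply the Brownian martingale representation theorem to obtain $Z \in L^2((0,T),\mathbb{F})$ with $M_t = M_0 + \int_0^t Z_s\,dB_s$. Setting $Y_t \triangleq M_t - \int_0^t f(s)\,ds$ gives a pair in $L^2\times L^2$ that solves the BSDE with driver $f$, and $Y$ inherits continuity from $M$ and from the absolutely continuous drift. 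Uniqueness in this linear case is immediate from the injectivity of the stochastic integral.

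For the general case, fix $\beta > 0$ (to be chosen large) and endow $\mathcal{H} \triangleq L^2((0,T),\mathbb{F}) \times L^2((0,T),\mathbb{F})$ with the weighted norm
\[
\|(y,z)\|_\beta^2 \triangleq \mathbb{E}\!\int_0^T e^{\beta s}\bigl(|y_s|^2 + |z_s|^2\bigr)\,ds.
\]
Define $\Phi:\mathcal{H}\to\mathcal{H}$ by $\Phi(y,z)=(Y,Z)$, where $(Y,Z)$ is the unique solution of the linear BSDE with driver $f(s)=g(s,y_s,z_s)$; assumptions (a)--(c) guarantee $f\in L^2((0,T),\mathbb{F})$, so $\Phi$ is well-defined by the preparatory step. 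To show $\Phi$ is a contraction, take $(y^i,z^i)$ with images $(Y^i,Z^i)$, set $\bar Y = Y^1-Y^2$, $\bar Z = Z^1-Z^2$, $\bar y = y^1-y^2$, $\bar z = z^1-z^2$, and apply It\^o's formula to $e^{\beta t}|\bar Y_t|^2$. Taking expectation kills the martingale part, and using the Lipschitz bound $|g(s,y^1,z^1)-g(s,y^2,z^2)|\le C(|\bar y_s|+|\bar z_s|)$ together with the elementary inequality $2ab\le \epsilon a^2 + \epsilon^{-1} b^2$ yields
\[
\mathbb{E}\!\int_0^T e^{\beta s}\bigl(\beta|\bar Y_s|^2 + |\bar Z_s|^2\bigr)\,ds \le K(\beta,C)\,\mathbb{E}\!\int_0^T e^{\beta s}\bigl(|\bar y_s|^2 + |\bar z_s|^2\bigr)\,ds,
\]
and for $\beta$ sufficiently large relative to $C$ the constant $K(\beta,C)$ becomes strictly less than $1$. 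Banach's fixed-point theorem then delivers a unique $(Y,Z)\in\mathcal{H}$ with $\Phi(Y,Z)=(Y,Z)$, which is exactly the solution of \eqref{BSDE}.

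Finally, continuity of $t\mapsto Y_t$ comes for free: from the equation $Y_t = M_t - \int_0^t g(s,Y_s,Z_s)\,ds$ where $M$ is a continuous martingale (by the representation step applied to the fixed point), and the drift integral is absolutely continuous in $t$. The main technical obstacle is the weighted-norm estimate: one must carefully balance the $|\bar y|$ and $|\bar z|$ cross-terms arising from the Lipschitz bound so that, after choosing $\epsilon$ and then $\beta$, both coefficients on the right-hand side are dominated by the corresponding coefficients on the left; everything else (measurability of $g(\cdot,y_\cdot,z_\cdot)$, integrability, uniqueness transfer from the linear case) is routine once this estimate is in hand.
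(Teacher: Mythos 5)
Your proposal is correct: the paper states this theorem without proof, citing \cite{pardoux,EPQ}, and your argument is precisely the classical Pardoux--Peng contraction proof found in those references (martingale representation for a driver free of $(y,z)$, then a Banach fixed point in the $e^{\beta s}$-weighted $L^2$-norm with $\beta$ large relative to the Lipschitz constant). The only step you gloss over is the justification that the local martingale $\int 2e^{\beta s}\bar Y_s \bar Z_s\,dB_s$ has vanishing expectation (a routine localization or Burkholder--Davis--Gundy argument), which does not affect the validity of the approach.
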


As argued in \cite{Bion}, if the final condition $X\in L^{\infty}(\mathcal{F}_T)$ and $g(\cdot,0,0)\equiv 0$ then we have the further regularity $(Y_t,Z_t)\in{L^{\infty}((0,T),\mathbb{F})}\times\mbox{BMO}(\mathbb{P})$. We need some results about further regularity of the solution when $g(\cdot,0,0)\not\equiv0$. Furthermore, the quadratic case has been studied in \cite{kob} and, later, in \cite{zheng}. In particular, \cite{zheng} has shown that if $g(\cdot,0,0)$ is deterministic and $\int_t^T|g(s,0,0)|ds<+\infty$ then the solution $(Y_t,Z_t)\in L^{\infty}((0,T);\mathbb{F})\times\mbox{BMO}(\mathbb{P})$. Under Lipschitz assumptions, we can prove the same thesis weakening the hypotheses on $g(\cdot,0,0)$. This result will be useful in the following, for what concerns the theory of BSVIEs.

\begin{proposition}
	\label{regularityBSDE}
	Let $X\in L^{\infty}(\mathcal{F}_T)$ and suppose that $g:\Omega\times[0,T]\times\mathbb{R}\times\mathbb{R}^n\to\mathbb{R}$ satisfies assumptions a) and b) of Theorem \ref{thm: exist uniq} plus c') $g(\cdot,0,0)\in\mbox{BMO}(\mathbb{P})$ (called 'standard assumptions' henceforth).

	Then there exists a unique pair of stochastic processes $(Y_t,Z_t)_{t\in[0,T]}\in L^{\infty}((0,T),\mathbb{F})\times \mbox{BMO}(\mathbb{P})$ satisfying \eqref{BSDE} in It\^o sense. Furthermore, $Y_t$ has continuous paths.
\end{proposition}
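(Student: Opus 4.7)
\medskip
\noindent\textbf{Proof proposal.}
The plan is to first invoke Theorem \ref{thm: exist uniq} to get existence and uniqueness of $(Y,Z)$ in the $L^2\times L^2$ sense, and then upgrade the regularity in two stages: first showing $Y\in L^\infty$, and afterwards showing $Z\in\mbox{BMO}(\mathbb{P})$. Note that the hypothesis $g(\cdot,0,0)\in\mbox{BMO}(\mathbb{P})$ is stronger than $g(\cdot,0,0)\in L^2((0,T),\mathbb{F})$, so Theorem \ref{thm: exist uniq} applies and delivers a unique solution $(Y,Z)\in L^2((0,T),\mathbb{F})\times L^2((0,T),\mathbb{F})$ with continuous paths for $Y$. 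It remains to promote this regularity under the standing assumptions.

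\medskip
\noindent\emph{Step 1: $Y\in L^\infty$ via linearization.} The idea is to write the BSDE in linear form by setting
\[
\alpha_s\triangleq \frac{g(s,Y_s,Z_s)-g(s,0,Z_s)}{Y_s}\,\mathbb{1}_{\{Y_s\neq 0\}},\qquad \beta_s\triangleq \frac{g(s,0,Z_s)-g(s,0,0)}{|Z_s|^2}Z_s\,\mathbb{1}_{\{Z_s\neq 0\}},
\]
so that by the Lipschitz assumption a) both $\alpha$ and $\beta$ are bounded by $C$ and
\[
g(s,Y_s,Z_s)=\alpha_s Y_s+\beta_s\cdot Z_s+g(s,0,0).
\]
Since $\beta$ is bounded, $\mathcal{E}(\beta\cdot B)$ is a uniformly integrable martingale and one can define an equivalent measure $\mathbb{Q}$ via $d\mathbb{Q}/d\mathbb{P}=\mathcal{E}(\beta\cdot B)_T$. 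Solving the resulting linear BSDE under $\mathbb{Q}$ with integrating factor $\Gamma_{t,s}=\exp(\int_t^s\alpha_u\,du)$ yields the representation
\[
Y_t=\mathbb{E}^\mathbb{Q}\!\left[\Gamma_{t,T}X+\int_t^T\Gamma_{t,s}\,g(s,0,0)\,ds\,\bigg|\,\mathcal{F}_t\right].
\]
Then $|\Gamma_{t,s}|\leq e^{CT}$, and combining a conditional Cauchy--Schwarz with the bound $\|\mathbb{E}[(d\mathbb{Q}/d\mathbb{P})^2/(\mathcal{E}(\beta\cdot B)_t)^2|\mathcal{F}_t]\|_\infty\leq e^{C^2T}$ (which uses $|\beta|\leq C$) and with the BMO bound $\mathbb{E}[\int_t^T|g(s,0,0)|^2\,ds|\mathcal{F}_t]\leq K$, one obtains a deterministic upper bound
\[
|Y_t|\leq e^{CT}\bigl(\|X\|_\infty+e^{C^2T/2}\sqrt{T K}\bigr),
\]
so $Y\in L^\infty((0,T),\mathbb{F})$.

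\medskip
\noindent\emph{Step 2: $Z\in\mbox{BMO}(\mathbb{P})$ via It\^o.} Once $Y$ is known to be essentially bounded, one applies It\^o's formula to $|Y_t|^2$ and obtains
\[
|Y_t|^2+\int_t^T|Z_s|^2\,ds=|X|^2+2\int_t^T Y_s\, g(s,Y_s,Z_s)\,ds-2\int_t^T Y_s Z_s\,dB_s.
\]
Using Lipschitz plus Young's inequality, the term $2Y_s g(s,Y_s,Z_s)$ is controlled by $\tfrac12|Z_s|^2+|g(s,0,0)|^2+K'|Y_s|^2$ for some constant $K'$ depending only on $C$. Since $Y$ is bounded and $Z\in L^2$, the stochastic integral is a true martingale; taking conditional expectations and rearranging gives
\[
\tfrac12\,\mathbb{E}\!\left[\int_t^T|Z_s|^2\,ds\,\bigg|\,\mathcal{F}_t\right]\leq \|X\|_\infty^2+K'T\|Y\|_\infty^2+\mathbb{E}\!\left[\int_t^T|g(s,0,0)|^2\,ds\,\bigg|\,\mathcal{F}_t\right],
\]
where the right-hand side is uniformly essentially bounded in $t$ by the BMO hypothesis. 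This is exactly the BMO bound for $Z$. Continuity of $Y$ is inherited from Theorem \ref{thm: exist uniq} (it is equivalent to continuity of the stochastic integral $\int_0^\cdot Z_s\,dB_s$, which is automatic from $Z\in L^2$), and uniqueness in the smaller class follows a fortiori from uniqueness in $L^2\times L^2$.

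\medskip
\noindent\emph{Main obstacle.} The only genuinely delicate step is Step 1: one needs the linearization/Girsanov argument to transfer the BMO control of $g(\cdot,0,0)$ into a uniform bound on $Y$. The change of measure is harmless because $\beta$ is bounded, but one has to be careful to use a conditional rather than global version of Cauchy--Schwarz so that the resulting estimate is pathwise in $t$. Step 2 is then a standard a priori estimate.
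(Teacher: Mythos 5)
Your proof is correct, but it takes a different route from the paper's. The paper gets both bounds at once by citing Proposition~3 of Fan (2018), which supplies the conditional a~priori estimate
\begin{equation*}
\mathbb{E}\Big[\sup_{m\in[t,T]}|Y_m|^2\,\Big|\,\mathcal{F}_t\Big]+\mathbb{E}\Big[\int_t^T|Z_s|^2\,ds\,\Big|\,\mathcal{F}_t\Big]\leq C\Big(\mathbb{E}\big[|X|^2\,\big|\,\mathcal{F}_t\big]+\mathbb{E}\Big[\int_t^T|g(s,0,0)|^2\,ds\,\Big|\,\mathcal{F}_t\Big]\Big);
\end{equation*}
taking the essential supremum over $t$ and $\omega$ immediately yields $Z\in\mbox{BMO}(\mathbb{P})$, and the $L^\infty$ bound on $Y$ then follows from the representation $Y_t=\mathbb{E}[X+\int_t^Tg(s,Y_s,Z_s)\,ds\,|\,\mathcal{F}_t]$ together with Jensen and the Lipschitz condition. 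You instead prove the estimate from scratch: the linearization--Girsanov argument with the integrating factor $\Gamma_{t,s}$ gives $Y\in L^\infty$ directly from the BMO control of $g(\cdot,0,0)$ (your conditional Cauchy--Schwarz step, with the bound $e^{C^2T}$ on the conditional second moment of the density ratio, is the right way to keep the estimate pathwise in $t$), and the It\^o/energy estimate on $|Y_t|^2$ then delivers the BMO bound for $Z$ once $Y$ is known to be bounded. Your version is self-contained and makes the quantitative dependence on $\|X\|_\infty$ and on the BMO norm of $g(\cdot,0,0)$ explicit, at the cost of being longer; the paper's version is shorter but leans on an external a~priori estimate and, as written, has a slightly awkward circularity in appearance (the bound on $|Y_t|^2$ is expressed in terms of $\mathbb{E}[\sup_m|Y_m|^2\,|\,\mathcal{F}_t]$ before that quantity is re-absorbed via the Fan estimate), which your two-stage ordering avoids. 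The remaining points in your write-up (true-martingale property of $\int Y_sZ_s\,dB_s$, continuity of $Y$, uniqueness inherited from the $L^2\times L^2$ class) are all handled correctly.
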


\begin{proof}
	By Theorem \ref{EUBSDE} there exists a unique solution $(Y_t,Z_t)_{t\in[0,T]}\in L^2((0,T),\mathbb{F})\times L^2((0,T),\mathbb{F})$, and $Y_t$ has continuous paths. We only need to prove the further regularity. By Proposition 3 of \cite{fan} we obtain:
	\begin{equation}
		\mathbb{E}\left[\sup_{m\in[t,T]}|y_m|^2\Big|\mathcal{F}_t\right]+\mathbb{E}\left[\int_t^T|Z_s|^2ds\Big|\mathcal{F}_t\right]\leq C\left(\mathbb{E}\left[|X|^2 | \mathcal{F}_t\right]+\mathbb{E}\left[\left. \int_t^T|g(s,0,0)|^2ds\right|\mathcal{F}_t\right]\right).
		\label{fan3}
	\end{equation}
As far as $Z_t$ regularity is concerned, by equation \eqref{fan3} and by taking the sup over $t\in[0,T]$ and $\omega\in\Omega$ we get:
\begin{equation}
\hspace{-0.4cm}\left\|\sup_{t\in[0,T]}\mathbb{E}\left[\left. \int_t^T|Z_s|^2ds\right|\mathcal{F}_t\right]\right\|_{\infty}\leq C\left(\|X\|_{\infty}^2+\left\|\sup_{t\in[0,T]}\mathbb{E}\left[\left. \int_t^T|g(s,0,0)|^2ds\right|\mathcal{F}_t\right]\right\|_{\infty}\right)\leq C<+\infty,
\label{estimateZ}
\end{equation}
where here and for the rest of the proof $C>0$ is a constant that varies from line to line and where the last inequality follows from $g(\cdot,0,0)\in\mbox{BMO}(\mathbb{P})$. Hence, $Z\in\mbox{BMO}(\mathbb{P})$. For what concerns $Y_t$ we have:
\begin{equation*}
\mathbb{E}\left[Y_t\big|\mathcal{F}_t\right]=Y_t=\mathbb{E}\left[X\big|\mathcal{F}_t\right]+\mathbb{E}\left[\left. \int_t^Tg(s,Y_s,Z_s)ds\right|\mathcal{F}_t\right].
\end{equation*}
By Jensen's inequality, 
Lipschitz assumption and inequalities above, it then follows that
\begin{align*}
	|Y_t|^2&\leq C\left(\|X\|_{\infty}^2+\mathbb{E}\left[\int_t^T\left(|g(s,0,0)|^2+|Z_s|^2\right)ds+\sup_{m\in[t,T]}|y_m|^2\Big|\mathcal{F}_t\right]\right) \\ &\leq C\left(\|X\|_{\infty}^2+\mathbb{E}\left[\int_t^T|g(s,0,0)|^2\Big|\mathcal{F}_t\right]\right).
\end{align*}
Taking the supremum, we then obtain:
\begin{equation}
	\Big\|\sup_{t\in[0,T]}|Y_t|\Big\|_{\infty}^2\leq C\left(\|X\|_{\infty}+\left\|\sup_{t\in[0,T]}\mathbb{E}\left[\int_t^T|g(s,0,0)|^2ds\Big|\mathcal{F}_t\right]\right\|_{\infty}\right)\leq C<+\infty.
\label{estimateY}
\end{equation}
Thus, $Y_t\in L^{\infty}((0,T),\mathbb{F})$ and the proof is concluded.
\end{proof}

The relation between BSDEs and dynamic risk measures has been deeply studied in the literature. We address an interested reader to \cite{viag,peng,BaEl,ElRa,penalty}, among others. In particular, concerning cash-subadditive risk measures, El Karoui and Ravanelli \cite{ElRa} proved that if the driver $g$ satisfies the standard assumptions (or a quadratic growth assumption), is convex in $(y,z)$ and decreasing in $y$, then $\rho_t(X) \triangleq Y_t^{-X}$, with $Y_t^{-X}$ being the first component of the solution of the BSDE
\begin{equation}
    Y_t^{-X}=-X+\int_t^Tg(s,Y_s,Z_s)ds-\int_t^T Z_sdB_s, \quad X\in L^{\infty}(\mathcal{F}_T),
\label{BSDEcsam}
\end{equation}
is a dynamic convex, cash-subadditive, and time-consistent risk measure. Furthermore, $\rho_t$ admits the following dual representation:
\begin{equation}
\rho_t(X)=\esssup_{(\beta_t,\mu_t)\in \mathcal{A}}\mathbb{E}_{\mathbb{Q}_t^{\mu}}\left[\left.e^{-\int_t^T\beta_sds}(-X)-\int_t^Te^{-\int_t^s\beta_udu} \, G(s,\beta_s,\mu_s)ds \right|\mathcal{F}_t\right],
\label{representation}
\end{equation}
where
\begin{equation*}
\mathbb{E}\left[\frac{d\mathbb{Q}^{\mu}_t}{d\mathbb{P}}\Bigg| \mathcal{F}_t\right]=\mathcal{E}(\mu\cdot B)_t,
\end{equation*}
$G$ is the Fenchel transformation (or convex conjugate) of $g$, i.e.
\begin{equation*}
G(t,\beta,\mu)\triangleq\ds\sup_{(y,z)\in\mathbb{R}\times\mathbb{R}^n}\left\{-\beta y-\langle \mu, z\rangle - g(t,y,z)\right\},
\end{equation*}
and
\begin{equation*}
\mathcal{A}\triangleq\left\{(\beta_t,\mu_t)_{t\in[0,T]} \  \mathcal{F}_t\mbox{-adapted}: G(t,\beta_t,\mu_t)<+\infty,  \beta_t\geq 0, 0\leq\beta_t+|\mu_t|\leq C  \mbox{ for any } t\in[0,T] \right\}.
\end{equation*}

By combining Theorems 7.5 in \cite{ElRa} and Proposition 3.6 in \cite{penalty}, it can be easily checked that if the driver $g$ satisfies the standard assumptions and convexity, then $G(t,\beta_t,\mu_t)=+\infty$ if $\beta_t<0$ or $\beta_t+|\mu_t|\not\in [0,C]$. Moreover, the $\esssup$ in \eqref{representation} is attained for some $(\Bar{\beta}_t,\Bar{\mu}_t)_{t\in[0,T]}\in\mathcal{A}$. Hence
\begin{equation}
    \rho_t(X)=\mathbb{E}_{\mathbb{Q}_t^{\bar{\mu}}}\left[ \left. e^{-\int_t^T\bar{\beta}_sds}(-X)-\int_t^Te^{-\int_t^s\bar{\beta}_udu} \, G(s,\bar{\beta}_s,\bar{\mu}_s)ds \right|\mathcal{F}_t\right].
    \label{BSDEmax}
\end{equation}

In particular, the following result deals with the sublinear and cash-subadditive case. We recall that with sublinear it is meant the case where both subadditivity and positive homogeneity are fulfilled. In terms of the driver $g$ this corresponds to\medskip

\noindent - positive homogeneity in $(y,z)$: $$ d\mathbb{P}\times dt\mbox{-a.s., } \ g(t,\alpha y,\alpha z)=\alpha g(t,y,z) \mbox{ for any } \alpha\geq 0,  (y,z)\in\mathbb{R}\times\mathbb{R}^n;$$

\noindent - subadditivity in $(y,z)$:
    $$ d\mathbb{P}\times dt\mbox{-a.s., } \  g(t,y_1+y_2,z_1+z_2)\leq g(t,y_1,z_1)+g(t,y_2,z_2) \mbox{ for any } (y_1,z_1),(y_2,z_2)\in\mathbb{R}\times\mathbb{R}^n.$$
\smallskip

\begin{lemma}\label{coherentRM}
Suppose that the driver $g$ is decreasing in $y$ and satisfies the standard assumptions. If, in addition, $g$ is sublinear,
then the dynamic risk measure $\rho_t(X)=Y_t^{-X}$ induced by \eqref{BSDEcsam} is sublinear and admits the dual representation:
\begin{equation}
    \rho_t(X)=\esssup_{(\beta_t,\mu_t)\in\mathcal{A}}\mathbb{E}_{\mathbb{Q}_t^{\mu}}\left[e^{-\int_t^T\beta_sds}(-X)\Big|\mathcal{F}_t\right]=\mathbb{E}_{\mathbb{Q}_t^{\bar{\mu}}}\left[e^{-\int_t^T\bar{\beta}_sds}(-X)\Big|\mathcal{F}_t\right], \mbox{ \  } X\in L^{\infty}(\mathcal{F}_T),
    \label{BSDEsublinear}
\end{equation}
where $(\bar{\beta},\bar{\mu}) \in \mathcal{A}$ is an optimal scenario.
\end{lemma}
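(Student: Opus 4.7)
The plan is to start from the dual representation \eqref{representation}, which is already available since a sublinear function is in particular convex, and exploit the additional positive homogeneity of $g$ to simplify the penalty term $G$. More precisely, I expect to show that, under sublinearity of $g$, the Fenchel conjugate $G(t,\cdot,\cdot)$ takes only the values $0$ and $+\infty$, so that the penalty integral in \eqref{representation} vanishes on the effective domain, yielding \eqref{BSDEsublinear}.

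First, I would prove that $G(t,\beta,\mu)\in\{0,+\infty\}$. By positive homogeneity of $g$ in $(y,z)$, for any fixed $\alpha>0$ the substitution $(y,z)\mapsto(\alpha y,\alpha z)$ in the supremum defining $G(t,\beta,\mu)$ gives $G(t,\beta,\mu)=\alpha G(t,\beta,\mu)$. This forces $G(t,\beta,\mu)$ to be either $0$ or $+\infty$. The effective domain $\{G(t,\cdot,\cdot)=0\}$ coincides with the set of $(\beta,\mu)$ such that $-\beta y-\langle\mu,z\rangle\leq g(t,y,z)$ for every $(y,z)$; by the discussion following \eqref{representation}, combining monotonicity of $g$ in $y$ with the uniform Lipschitz condition, this set is included in $\{\beta\geq 0,\ \beta+|\mu|\leq C\}$, which is precisely the structural restriction defining $\mathcal{A}$.

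Second, inserting the dichotomy $G\in\{0,+\infty\}$ into \eqref{representation} and discarding the $(\beta,\mu)$'s that send the integrand to $+\infty$ (which would give $-\infty$ inside the $\esssup$), the $\esssup$ reduces to the effective domain where the penalty integral disappears, leaving exactly
\begin{equation*}
\rho_t(X)=\esssup_{(\beta_t,\mu_t)\in\mathcal{A}}\mathbb{E}_{\mathbb{Q}_t^{\mu}}\left[e^{-\int_t^T\beta_sds}(-X)\Big|\mathcal{F}_t\right].
\end{equation*}
Existence of the optimal $(\bar\beta,\bar\mu)\in\mathcal{A}$ follows from the attainability already asserted for \eqref{representation}.

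Finally, sublinearity of $\rho_t$ is read off directly from this representation: positive homogeneity is immediate since for $\alpha\geq 0$ one can pull $\alpha$ out of the conditional expectation and the $\esssup$; subadditivity follows from the elementary inequality $\esssup(a+b)\leq\esssup a+\esssup b$ applied to the linear functionals $X\mapsto\mathbb{E}_{\mathbb{Q}^\mu_t}[e^{-\int_t^T\beta_s ds}(-X)|\mathcal{F}_t]$. The main obstacle I anticipate is the first step: one must argue carefully that the $\alpha\to 0^+$ and $\alpha\to+\infty$ scaling argument is legitimate (i.e.\ that the supremum exists as an extended real) and that the identification of the effective domain of $G$ with the admissible set $\mathcal{A}$ is consistent with the constraints already shown in \cite{ElRa,penalty}; the rest is essentially bookkeeping on the dual representation.
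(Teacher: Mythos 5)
Your proposal is correct and follows essentially the same route as the paper: invoke the convex dual representation \eqref{representation}--\eqref{BSDEmax}, observe that positive homogeneity of $g$ forces its Fenchel conjugate $G$ to vanish on its effective domain (so the penalty integral disappears), and then read off sublinearity of $\rho_t$ from the resulting representation. The only difference is cosmetic: the paper cites Theorem 2.3.1(V) of \cite{zali} for the fact that the conjugate of a positively homogeneous function is $0$ on its domain, whereas you prove it directly via the scaling substitution $(y,z)\mapsto(\alpha y,\alpha z)$, which is a perfectly valid self-contained replacement.
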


\begin{proof}
If $g$ is sublinear then convexity is automatically satisfied, so the dual representation of $\rho_t$ provided in equations \eqref{representation} and \eqref{BSDEmax} holds for any $X\in L^{\infty}(\mathcal{F}_T)$. Moreover, from Theorem 2.3.1(V) of \cite{zali} it follows that the convex conjugate of a positively homogeneous function is $0$ in its domain (i.e. $G(t,\beta,\mu)<+\infty$ implies $G(t,\beta,\mu)=0$).
Finally, sublinearity of $\rho_t$ can be easily checked.
\end{proof}

\subsubsection{BSVIEs}

somehow related to BSDEs, the family of Backward Stochastic Volterra Integral Equations (BSVIEs) was studied for the first time in Yong \cite{Yong2006}. See also \cite{hu-oksendal,CTBSVIE} and, for the applications to risk measures, \cite{Nacira,DN-RG,Yong2007}. In the sequel, we focus on the following (special form of a) BSVIE:
\begin{equation}
    Y(t)=\varphi(t)+\int_t^T g(t,s,Y(s),Z(t,s))ds-\int_t^T Z(t,s)dB_s,
    \label{BSIVE}
\end{equation}
where $\varphi(t)$ and $g$ are given, while the pair $(Y(\cdot),Z(\cdot,\cdot))$ is unknown. Note that $\varphi (t) \in L^2(\mathcal{F}_T)$ is not necessarily $\mathcal{F}_t$-measurable but varies with $t$.

As in the case of standard assumptions for BSDEs, we introduce an analogous for BSVIEs (see, for instance, \cite{Yong2007}). We remind that $\Delta\triangleq\{(t,s)\in[0,T]^2: t\leq s\}$. In the sequel, $g:\Omega\times\Delta\times\mathbb{R}\times\mathbb{R}^n\to\mathbb{R}$ will be required to satisfy the following standard assumptions:\medskip

\noindent $i)$ $g$ is $\mathcal{F}_T\otimes\mathcal{B}\left(\Delta\times\mathbb{R}\times\mathbb{R}^n\right)$-measurable and $g(t,s,y,z)$ is $(\mathcal{F}_t)_{t\in[0,T]}$-progressively measurable on $[t,T]$ for any $(t,y,z)\in [0,T)\times\mathbb{R}\times\mathbb{R}^n$; \smallskip

\noindent $ii)$ Uniformly Lipschitz: there exists $L>0$ such that
$$|g(t,s,y_1,z_1)-g(t,s,y_2,z_2)|\leq L(|y_1-y_2|+|z_1-z_2|)
$$
for any $(t,s)\in\Delta,(y_1,z_1),(y_2,z_2)\in\mathbb{R}\times\mathbb{R}^n$; \smallskip

\noindent $iii)$ $\mathbb{E} \left[ \int_0^T\left(\int_t^T|g(t,s,0,0)|ds\right)^2dt \right]<+\infty.$ \medskip

We recall from \cite{wellpos} that a pair $(Y(\cdot),Z(\cdot,\cdot))\in\mathcal{H}^1(0,T)$ is a solution of the BSVIE \eqref{BSIVE} if it solves equation \eqref{BSIVE} for almost all $t\in[0,T]$ in the usual It\^o sense.

\begin{theorem}[Existence and Uniqueness - see Yong \cite{Yong2006}]
	Let the driver $g:\Omega\times\Delta\times\mathbb{R}\times\mathbb{R}^n\to\mathbb{R}$ satisfy the standard assumptions. Then, for any $\varphi\in L^2((0,T),{\mathcal{F}_T};\mathbb{R})$, the BSVIE \eqref{BSIVE} admits a unique solution in $\mathcal{H}^2(0,T)$.
\label{EUBSVIE}
\end{theorem}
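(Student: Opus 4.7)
My plan is to run a Banach fixed-point argument on $\mathcal{H}^2(0,T)$ endowed with a suitable exponentially-weighted norm, reducing the BSVIE to a parametrized family of BSDEs to which Theorem \ref{EUBSDE} applies.

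The first step is to introduce the solution operator $\Phi$. Given an adapted process $Y\in L^2((0,T);\mathbb{F})$, for each fixed $t\in[0,T)$ consider the standard BSDE on $[t,T]$
\begin{equation*}
\tilde Y^{t}(r)=\varphi(t)+\int_r^T g(t,s,Y(s),Z^{t}(s))\,ds-\int_r^T Z^{t}(s)\,dB_s,\qquad r\in[t,T].
\end{equation*}
Assumption $ii)$ makes $(s,z)\mapsto g(t,s,Y(s),z)$ uniformly Lipschitz in $z$, while $iii)$ together with the Lipschitz bound and the $L^2$-integrability of $Y$ guarantees $g(t,\cdot,Y(\cdot),0)\in L^2((t,T);\mathbb{F})$ for $dt$-a.e. $t$; moreover $\varphi(t)\in L^2(\mathcal{F}_T)$ for $dt$-a.e. $t$. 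Theorem \ref{EUBSDE} then produces a unique solution $(\tilde Y^{t},Z^{t})\in L^2((t,T);\mathbb{F})\times L^2((t,T);\mathbb{F})$. I set $\Phi(Y)(t):=\tilde Y^{t}(t)$, which is $\mathcal{F}_t$-measurable, and $Z(t,s):=Z^{t}(s)$.

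The second step is the contraction estimate. For two inputs $Y_1,Y_2$, applying It\^o's formula to $|\tilde Y^{t}_1-\tilde Y^{t}_2|^2$ on $[t,T]$ and using the Lipschitz assumption yields the standard BSDE stability bound
\begin{equation*}
\mathbb{E}|\Phi(Y_1)(t)-\Phi(Y_2)(t)|^2 + \mathbb{E}\int_t^T|Z_1(t,s)-Z_2(t,s)|^2\,ds\leq C\,\mathbb{E}\int_t^T|Y_1(s)-Y_2(s)|^2\,ds,
\end{equation*}
with $C$ depending only on $L$ and $T$. Equipping $L^2((0,T);\mathbb{F})$ with the equivalent weighted norm $\|Y\|_\beta^2:=\mathbb{E}\int_0^T e^{\beta t}|Y(t)|^2\,dt$, multiplying by $e^{\beta t}$, integrating in $t$ and swapping the order of integration produce
\begin{equation*}
\|\Phi(Y_1)-\Phi(Y_2)\|_\beta^2\leq \frac{C}{\beta}\,\|Y_1-Y_2\|_\beta^2,
\end{equation*}
so $\Phi$ is a strict contraction as soon as $\beta>C$. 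Banach's fixed point theorem delivers a unique $Y\in L^2((0,T);\mathbb{F})$ with $\Phi(Y)=Y$, and the associated family $Z(t,\cdot)$ completes the solution in $\mathcal{H}^2(0,T)$.

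For uniqueness, any solution $(Y,Z)\in\mathcal{H}^2(0,T)$ of \eqref{BSIVE} is automatically a fixed point of $\Phi$, with $Z(t,\cdot)$ equal to the $Z$-component of the BSDE driven by $Y$ itself; hence it must coincide with the fixed point produced above. The step I expect to be the main obstacle is not the contraction, which is essentially a weighted Gronwall-type argument, but rather verifying that the family $\{Z^{t}(\cdot)\}_{t\in[0,T]}$ obtained BSDE-by-BSDE is jointly measurable in $(t,s,\omega)$, so that $Z$ genuinely belongs to $L^2((0,T),\mathcal{F}_t;L^2(\Omega,\mathbb{F};L^2(0,T)))$. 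The usual remedy is to prove the result first when $\varphi$ and $g$ are simple (piecewise constant in $t$), where joint measurability is immediate, and then to approximate the general data in the $\|\cdot\|_\beta$ norm and pass to the limit by means of the stability estimate above.
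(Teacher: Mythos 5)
The paper does not prove this theorem — it is imported from Yong \cite{Yong2006} — and your contraction argument on a $t$-parametrized family of BSDEs is precisely the standard proof for this special form of BSVIE (driver depending on the diagonal value $Y(s)$ and on $Z(t,s)$, but not on $Z(s,t)$, which is what lets a fixed point in $Y$ alone suffice); the weighted-norm contraction, the identification of any $\mathcal{H}^2$-solution as a fixed point of $\Phi$, and the joint-measurability issue you flag together with its approximation remedy are all handled appropriately. The one imprecision is that assumption $iii)$ only yields $\int_t^T|g(t,s,0,0)|\,ds\in L^2(\Omega)$ for a.e.\ $t$, i.e.\ $g(t,\cdot,0,0)\in L^2(\Omega;L^1(t,T))$ rather than $L^2((t,T),\mathbb{F})$, so Theorem \ref{EUBSDE} as stated does not literally apply; you should instead invoke BSDE well-posedness under the weaker integrability $\mathbb{E}\bigl(\int_t^T|g(t,s,0,0)|\,ds\bigr)^2<+\infty$ (as in \cite{fan}), after which your stability estimate and contraction go through unchanged.
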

\noindent In the following we are mostly interested in BSVIEs of the form:
\begin{equation}
    Y(t)=\varphi(t)+\int_t^T g(t,s,Z(t,s))ds-\int_t^T Z(t,s)dB_s, \ t\in[0,T],
    \label{rBSVIE}
\end{equation}
when $g$ does not depend on $y$ and $\varphi\in L^{\infty}((0,T),\mathcal{F}_T)$. We show in the next theorem that the solution $(Y(\cdot),Z(\cdot,\cdot))$ of the BSVIE above has some further regularity.

\begin{theorem}\label{LinfBSVIE}
Let $g:\Omega\times\Delta\times\mathbb{R}^n\to\mathbb{R}$ be independent of $y$ and satisfy assumptions i) and ii) above plus the further constraint $g(\cdot,\cdot,0,0)\in L^{\infty}((0,T);\mbox{BMO}(\mathbb{P}))$, i.e.
$$\sup_{t\in[0,T]}\left\|\sup_{r\in[t,T]}\mathbb{E}\left[\int_r^T|g(t,s,0)|^2ds\Big|\mathcal{F}_r\right]\right\|_{\infty}<+\infty.$$
Let $\varphi\in L^{\infty}((0,T),\mathcal{F}_T)$.
Then there exists a unique solution to BSVIE \eqref{rBSVIE} such that
$$(Y(\cdot),Z(\cdot,\cdot))\in L^{\infty}((0,T),\mathbb{F})\times L^{\infty}((0,T);\mbox{BMO}(\mathbb{P})).$$
\end{theorem}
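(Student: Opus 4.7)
The plan is to first produce a unique $\mathcal{H}^2(0,T)$-solution to \eqref{rBSVIE} via Theorem \ref{EUBSVIE}, and then to upgrade its regularity by viewing the BSVIE, for each frozen $t\in[0,T]$, as a BSDE over $[t,T]$ to which Proposition \ref{regularityBSDE} applies. This parametric-BSDE viewpoint is natural here because $g$ is independent of $y$, so the sliced driver $(s,y,z)\mapsto g(t,s,z)$ inherits Lipschitz continuity from hypothesis (ii) and $\mbox{BMO}$-regularity of its free term from the assumption $g(\cdot,\cdot,0)\in L^{\infty}((0,T);\mbox{BMO}(\mathbb{P}))$.

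For the existence step I only need to check assumption (iii) of the standing BSVIE hypotheses. By Cauchy--Schwarz,
\begin{equation*}
\mathbb{E}\int_0^T\Bigl(\int_t^T|g(t,s,0)|\,ds\Bigr)^2 dt
\le T\int_0^T \mathbb{E}\Bigl[\int_t^T|g(t,s,0)|^2 ds\Bigr] dt
\le T^2 \sup_{t\in[0,T]}\Bigl\|\mathbb{E}\Bigl[\int_t^T|g(t,s,0)|^2 ds\,\Big|\,\mathcal{F}_t\Bigr]\Bigr\|_\infty<+\infty,
\end{equation*}
the last bound being the $L^{\infty}((0,T);\mbox{BMO})$ hypothesis specialized to $r=t$. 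Since $\varphi\in L^{\infty}\subset L^2$, Theorem \ref{EUBSVIE} yields a unique $(Y(\cdot),Z(\cdot,\cdot))\in\mathcal{H}^2(0,T)$.

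For the regularity upgrade, for each $t\in[0,T]$ I would consider the auxiliary BSDE on $[t,T]$ with terminal $\varphi(t)\in L^{\infty}(\mathcal{F}_T)$ and driver $(s,y,z)\mapsto g(t,s,z)$. Proposition \ref{regularityBSDE} produces a unique solution $(\tilde Y^t_\cdot,\tilde Z^t_\cdot)\in L^{\infty}((t,T),\mathbb{F})\times\mbox{BMO}(\mathbb{P})$, together with the a priori bounds read off \eqref{estimateY}--\eqref{estimateZ}, namely
\begin{equation*}
\Bigl\|\sup_{s\in[t,T]}|\tilde Y^t_s|\Bigr\|_\infty^2 + \Bigl\|\sup_{r\in[t,T]}\mathbb{E}\Bigl[\int_r^T|\tilde Z^t_s|^2 ds\,\Big|\,\mathcal{F}_r\Bigr]\Bigr\|_\infty \le C\Bigl(\|\varphi(t)\|_\infty^2 + \Bigl\|\sup_{r\in[t,T]}\mathbb{E}\Bigl[\int_r^T|g(t,s,0)|^2 ds\,\Big|\,\mathcal{F}_r\Bigr]\Bigr\|_\infty\Bigr),
\end{equation*}
whose right-hand side is bounded uniformly in $t$ by the hypotheses on $\varphi$ and $g$. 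Uniqueness of the BSVIE solution in $\mathcal{H}^2$ forces the identification $Y(t)=\tilde Y^t_t$ and $Z(t,s)=\tilde Z^t_s$ on $\{t\le s\}$; adaptedness of $Y$ is built-in, since $\tilde Y^t_t=\mathbb{E}[\varphi(t)+\int_t^T g(t,s,\tilde Z^t_s)\,ds\mid\mathcal{F}_t]$ is $\mathcal{F}_t$-measurable. Taking essential suprema over $t$ in the two bounds above then places $Y$ in $L^{\infty}((0,T),\mathbb{F})$ and $Z$ in $L^{\infty}((0,T);\mbox{BMO}(\mathbb{P}))$.

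The main obstacle I anticipate is not the estimates themselves but the clean identification step: the parametrized family $\{(\tilde Y^t,\tilde Z^t):t\in[0,T]\}$ of BSDE solutions must be assembled into genuinely jointly measurable processes in $(t,s,\omega)$ before the equality $(Y(t),Z(t,\cdot))=(\tilde Y^t_t,\tilde Z^t_\cdot)$ can be played against the BSVIE solution selected by Theorem \ref{EUBSVIE}. This joint measurability is standard, following from the Picard iteration underlying Theorem \ref{EUBSVIE} together with preservation of measurability under $\mathcal{H}^2$-limits, but it deserves to be spelled out rather than taken for granted.
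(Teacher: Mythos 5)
Your proposal is correct and follows essentially the same route as the paper: obtain the $\mathcal{H}^2$-solution from Theorem \ref{EUBSVIE}, then for each frozen $t$ treat \eqref{rBSVIE} as a BSDE on $[t,T]$, apply Proposition \ref{regularityBSDE} with the uniform-in-$t$ bounds from \eqref{estimateY}--\eqref{estimateZ}, and identify $Y(t)=\eta(t;t)$, $Z(t,s)=\zeta(s;t)$. You are in fact slightly more careful than the paper in explicitly verifying assumption (iii) and in flagging the joint-measurability issue in $(t,s,\omega)$, which the paper leaves implicit.
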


\begin{proof}
Existence and uniqueness of the solution in $\mathcal{H}^2(0,T)$ are due to Theorem \ref{EUBSVIE}. Thus, we only need to prove the further regularity of this solution. To this aim, we consider the following family of BSDEs parameterized by $t\in[0,T]$:
\begin{equation}
	    \eta(r;t)=\varphi(t)+\int_r^Tg(t,s,\zeta(s;t))ds-\int_r^T\zeta(s;t)dB_s, \ \ r\in[t,T].
\end{equation}
By Proposition \ref{regularityBSDE}, for all fixed $t\in[0,T]$ the previous BSDE admits a unique adapted solution $(\eta(t,\cdot),\zeta(t,\cdot))\in L^{\infty}((t,T),{\mathbb{F}})\times \mbox{BMO}(\mathbb{P})$. Furthermore, by \eqref{estimateZ} we have the estimate:
\begin{equation*}
\left\|\sup_{r\in[t,T]}\mathbb{E}\left[\int_r^T|\zeta(s,t)|^2ds\Big|\mathcal{F}_r\right]\right\|_{\infty}\leq C_1 \hspace{-0.1cm} \left(\|\varphi(t)\|_{\infty}^2+\left\|\sup_{r\in[t,T]}\mathbb{E}\left[\int_r^T|g(t,s,0)|^2ds\Big|\mathcal{F}_r\right]\right\|_{\infty}\right)\hspace{-0.1cm} \leq C_2 \hspace{-0.1cm}<+\infty
\end{equation*}
for some constants $C_1,C_2>0$ that do not depend on $(t,s)\in\Delta$. It then follows that
\begin{eqnarray*}
	&&\sup_{t\in[0,T]}\left\|\sup_{r\in[t,T]}\mathbb{E}\left[\left.\int_r^T|\zeta(s,t)|^2ds\right|\mathcal{F}_r\right]\right\|_{\infty} \notag \\
	&&\leq C_3\left(\sup_{t\in[0,T]}\|\varphi(t)\|_{\infty}^2+\sup_{t\in[0,T]}\left\|\sup_{r\in[t,T]}\mathbb{E}\left[\left.\int_r^T|g(t,s,0)|^2ds\right|\mathcal{F}_r\right]\right\|_{\infty}\right)\leq C_4<+\infty,
	\label{estimates1}
\end{eqnarray*}
for some constants $C_3,C_4>0$, where the last inequality follows from $\varphi\in L^{\infty}((0,T),\mathcal{F}_T)$ and from $g(\cdot,\cdot,0,0)\in L^{\infty}((0,T);\mbox{BMO}(\mathbb{P}))$. 
 For any $t\in[0,T]$, \eqref{estimateY} also gives the estimate:
\begin{equation*}
	\Big\|\sup_{r\in[t,T]}|\eta(r;t)|\Big\|_{\infty}^2\leq C_5\left(\|\varphi(t)\|_{\infty}+\left\|\sup_{r\in[t,T]}\mathbb{E}\left[\int_r^T|g(t,s,0)|^2ds\Big|\mathcal{F}_t\right]\right\|_{\infty}\right)\leq C_6<+\infty
\end{equation*}
for some $C_5,C_6 >0$. Taking the supremum over $t\in[0,T]$, we can conclude $\eta\in L^{\infty}((0,T);L^{\infty}((t,T),\mathbb{F}))$.
The thesis follows by considering
$$Y(t)\triangleq\eta(t;t), \ Z(t,s)\triangleq\zeta(s;t), \quad \mbox{ for } (t,s)\in\Delta.$$
\end{proof}

We are now ready to present a version of the Comparison Theorem for BSVIEs. This result will be used in the ensuing parts of the paper. More details about Comparison Theorems for BSVIEs can be found in \cite{CTBSVIE}. Differently from \cite{CTBSVIE}, we require $(y,z)\in\mathbb{R}\times\mathbb{R}^n$ instead of $(y,z)\in\mathbb{R}^n\times\mathbb{R}^{n\times m}$, which allows us to weaken the hypotheses regarding the differentiability of $g$.

\begin{proposition}[Comparison Theorem]\label{CTBSVIE}
For $i=1,2$, let $g_i:\Omega\times\Delta\times\mathbb{R}^n\to\mathbb{R}$ be independent of $y$, satisfy the assumptions of Theorem \ref{LinfBSVIE} and
$$g_1(t,s,z)\leq g_2(t,s,z) \mbox{ a.s., a.e. } (t,s,z)\in\Delta\times\mathbb{R}^n.$$
Assume also that $\varphi_i\in L^{\infty}((0,T),\mathcal{F}_T)$ have continuous paths and satisfy $\varphi_1(t)\leq\varphi_2(t)$, a.s., a.e. $t \in [0,T]$. Then the solutions $(Y_i(\cdot),Z_i(\cdot,\cdot))$ of the BSVIEs \eqref{rBSVIE} with parameters $(g_i,\varphi_i)$ satisfy:
$$Y_1(t)\leq Y_2(t), \mbox{ a.s., a.e. } t\in[0,T].$$
\end{proposition}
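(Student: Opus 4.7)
My plan is to piggy-back on the construction already established in the proof of Theorem \ref{LinfBSVIE}, which reduces the BSVIE \eqref{rBSVIE} to a family of genuine BSDEs indexed by the ``lower'' time $t$, and then to apply the classical BSDE comparison theorem slice-by-slice in $t$. The key conceptual point is that, because $g$ does \emph{not} depend on $y$, for each fixed $t$ the equation is a standard Lipschitz BSDE in the variables $(\eta(\cdot;t),\zeta(\cdot;t))$ on the interval $[t,T]$; hence no regularity beyond the uniform Lipschitz condition in $z$ is needed to compare solutions.

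First I would invoke, for each $i=1,2$ and each fixed $t\in[0,T]$, the parameterized BSDE
$$
\eta_i(r;t)=\varphi_i(t)+\int_r^T g_i(t,s,\zeta_i(s;t))\,ds-\int_r^T \zeta_i(s;t)\,dB_s,\qquad r\in[t,T],
$$
whose well-posedness in $L^{\infty}((t,T),\mathbb{F})\times\mathrm{BMO}(\mathbb{P})$ is guaranteed by Proposition \ref{regularityBSDE}, together with the identification $Y_i(t)=\eta_i(t;t)$ and $Z_i(t,s)=\zeta_i(s;t)$ already established in the proof of Theorem \ref{LinfBSVIE}. Under the standing hypotheses, the terminal conditions satisfy $\varphi_1(t)\leq\varphi_2(t)$ a.s.\ and the drivers satisfy $g_1(t,s,z)\leq g_2(t,s,z)$ a.s., a.e. $(s,z)\in[t,T]\times\mathbb{R}^n$.

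Next, for each fixed $t$ I would apply the classical comparison theorem for BSDEs with Lipschitz driver (as in \cite{EPQ}) on the interval $[t,T]$. The standard linearization works painlessly here: writing $\delta\eta\triangleq\eta_2-\eta_1$, $\delta\zeta\triangleq\zeta_2-\zeta_1$, one has
$$
\delta\eta(r;t)=\bigl(\varphi_2(t)-\varphi_1(t)\bigr)+\int_r^T\!\!\bigl[\alpha(t,s)\cdot\delta\zeta(s;t)+R(t,s)\bigr]ds-\int_r^T\!\delta\zeta(s;t)\,dB_s,
$$
where $\alpha(t,s)\triangleq\frac{g_2(t,s,\zeta_2(s;t))-g_2(t,s,\zeta_1(s;t))}{|\delta\zeta(s;t)|^2}\,\delta\zeta(s;t)$ on $\{\delta\zeta\neq 0\}$ (and $0$ otherwise) is bounded by the Lipschitz constant $L$, and $R(t,s)\triangleq g_2(t,s,\zeta_1(s;t))-g_1(t,s,\zeta_1(s;t))\geq 0$. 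A Girsanov change of measure absorbing the $\alpha\cdot\delta\zeta$ term (the Novikov condition is trivial since $|\alpha|\leq L$) then yields
$$
\delta\eta(r;t)=\mathbb{E}^{\widetilde{\mathbb{Q}}^{t}}\!\!\left[\,\varphi_2(t)-\varphi_1(t)+\int_r^T R(t,s)\,ds\;\Big|\;\mathcal{F}_r\right]\geq 0,\qquad r\in[t,T].
$$

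Finally, specializing to $r=t$ gives $Y_1(t)=\eta_1(t;t)\leq\eta_2(t;t)=Y_2(t)$ for almost every $t\in[0,T]$, which is the desired conclusion. The main (mild) obstacle is book-keeping: one must check that the $t$-dependent Girsanov measure, the $\mathrm{BMO}$ regularity of $\zeta_i$, and the Lipschitz estimate can all be arranged uniformly enough that the classical BSDE comparison applies \emph{for a.e.} $t$ simultaneously. The continuity-in-$t$ of the paths of $\varphi_i$ plays a technical role here, ensuring that the slicewise a.s.\ inequality at each $t$ aggregates to an ``a.s., a.e. $t$'' statement via Fubini, rather than merely giving the comparison pointwise for each individual $t$.
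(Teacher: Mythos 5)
Your proposal is correct and follows essentially the same route as the paper's proof: reduce the BSVIE to the family of BSDEs on $[t,T]$ parameterized by $t$ (exactly the construction from Theorem \ref{LinfBSVIE}), apply the classical Lipschitz BSDE comparison theorem of \cite{EPQ} slice-by-slice, and evaluate at $r=t$. The only cosmetic difference is that you spell out the linearization/Girsanov argument that the paper delegates to Theorem 2.2 of \cite{EPQ}, and you are in fact slightly more careful than the paper about aggregating the $t$-dependent null sets into the ``a.s., a.e. $t$'' conclusion.
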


\begin{proof}
This proof is similar to that of Proposition 3.3 in \cite{CTBSVIE}.

Fixing $t\in[0,T]$, let us consider again the families of BSDEs parameterized by $t$:
\begin{equation*}
    \eta^i(r;t)=\varphi^i(t)+\int_r^T g(t,s,\zeta^i(s;t))ds-\int_r^T\zeta^i(s;t)dB_s, \ \ r\in[t,T],
\end{equation*}
for $i=1,2$. Since $g^i(t,s,z)$ and $\varphi^i(t)$ satisfy all the hypotheses of Theorem 2.2 of \cite{EPQ}, 
\begin{equation}
    \eta^1(r;t)\leq \eta^2(r,t), \quad \mbox{ for any } r\in[t,T], \mbox{ a.s.}
    \label{eta}
    \end{equation}
Set now
$$Y^i(t)=\eta^i(t;t) \mbox{ and } Z^i(r;t)=\zeta^i(r;t), \quad \mbox{ for any } (t,r)\in\Delta,$$
 which solve the corresponding BSVIE \eqref{rBSVIE}.
Letting $r\to t^+$ in equation \eqref{eta} and using the pathwise continuity of $\eta(\cdot;t)$ (see Theorem \ref{EUBSDE}), we obtain $Y^1(t)\leq Y^2(t)$ a.s., $t\in[0,T]$.
\end{proof}

\section{Capital allocation rules for cash-subadditive risk measures}  \label{sec: CAR and csa}

This section contains our main results on capital allocation rules induced by cash-subadditive risk measures where the approach of \cite{MastroRos} is extended to the non cash-additive case. First of all, we prove the one-to-one correspondence between cash-subadditive risk measures and capital allocation rules satisfying suitable properties. Afterwards, we consider the setting where risk measures are induced by BSDEs.

Unless otherwise stated, in the sequel we will focus on dynamic risk measures (and capital allocations) defined on $L^{\infty}(\mathcal{F}_T)$.

\subsection{Axiomatic approach}

Let $\rho_t$ be a dynamic risk measure as in \eqref{representationSRM} where we assume that for each $X\in L^{\infty}(\mathcal{F}_T)$ the $\esssup$ is attained for some $({D}^X_t,\mathbb{Q}^X_t)\in\mathcal{D}\times\mathcal{Q}$ (not necessarily unique). Define now
\begin{equation}
    \Lambda^{sub}_t(X,Y)\triangleq\mathbb{E}_{\mathbb{Q}_t^Y}\left[-D^Y_tX\Big|\mathcal{F}_t\right]-c_t(D^Y_t\mathbb{Q}^{Y}_t), \quad X,Y \in L^{\infty}(\mathcal{F}_T),
    \label{subCAR}
\end{equation}
where $$(D^Y_t,\mathbb{Q}^Y_t)\in\argessmax_{(D_t,\mathbb{Q}_t)\in\mathcal{D}\times\mathcal{Q}}\left\{\mathbb{E}_{\mathbb{Q}}\left[-D_tY\Big|\mathcal{F}_t\right]-c_t(D_t\mathbb{Q}_t)\right\}.$$
It is easy to check that the family $(\Lambda_t^{sub})_{t\in[0,T]}$ is a dynamic CAR that henceforth will be called \textit{dynamic cash-subadditive subdifferential} CAR associated to $\rho_t$.
To be more precise, $(\Lambda_t^{sub})_{t\in[0,T]}$ is a family of CARs defined via optimal scenarios. So, when we refer to $(\Lambda_t^{sub})_{t\in[0,T]}$ we mean one of the possible CARs corresponding to a specific scenario $(D_t,\mathbb{Q}_t)\in\mathcal{D}\times\mathcal{Q}$.

Note that the reason why $\Lambda_t^{sub}$ is called a \textit{subdifferential} CAR is because, under suitable hypotheses (see Proposition \ref{prop: optimal} below), a scenario $(D_t,\mathbb{Q}_t)$  is optimal in the representation of $\rho_t(X)$ if and only if $D_t \cdot \mathbb{Q}_t$ belongs to the subdifferential of $\rho_t$ at $X$, defined as
\begin{equation} \label{eq: subdiff-eq}
\partial\rho_t(X)\triangleq\{Z\in L^1_+(\mathcal{F}_T): \ \rho_t(Y)\geq \rho_t(X)+\mathbb{E}[-Z(Y-X)|\mathcal{F}_t] \mbox{ for every } Y\in L^{\infty}(\mathcal{F}_T)\}.
\end{equation}
 As a consequence, more than one optimal scenario is possible when $\rho_t(X)$ is only subdifferentiable, while if $\rho_t(X)$ is also Gateaux differentiable then the subdifferential is a singleton and there exists a unique $D_t \cdot \mathbb{Q}_t$ with $(D_t,\mathbb{Q}_t)$ being an optimal scenario. We recall (see, e.g., \cite{zali}) that the directional derivative of $\rho_t$ at $X$ in the direction $Y$ is defined (if there exists) as
 $$
 D_{\rho}(Y,X)\triangleq\lim_{h \to 0}\frac{\rho_t(X+hY)-\rho_t(X)}{h},
 $$
 while $\rho_t(X)$ is said to be Gateaux differentiable at $X$ if
$$D_{\rho}^+(Y,X)\triangleq\lim_{h \downarrow 0}\frac{\rho_t(X+hY)-\rho_t(X)}{h}$$
exists for any $Y \in L^{\infty}(\mathcal{F}_T)$ and there exists $\nabla \rho_t(X) \in L^1_+(\mathcal{F}_T)$ such that $D_{\rho}^+(Y,X)= \mathbb{E}_{\mathbb{P}}[-Y \nabla \rho_t(X)]$ for any $Y \in L^{\infty}(\mathcal{F}_T)$. In such a case, $\nabla \rho_t(X)$ is unique and is called the Gateaux derivative. Furthermore, also $\nabla \rho_t (Y,X)\triangleq\lim_{h \to 0}\frac{\rho_t(X+hY)-\rho_t(X)}{h}$ exists and $\partial \rho_t(X)= \{ \nabla \rho_t(X) \}$. Note that the definition of Gateaux differentiability is not universal.
\medskip

\begin{proposition} \label{prop: optimal}
Let $\rho_t:L^{\infty}(\mathcal{F}_T)\to L^{\infty}(\mathcal{F}_t)$ be a dynamic cash-subadditive and continuous from above risk measure and suppose that the $\esssup$ in \eqref{representationSRM} is attained for a fixed $X\in L^{\infty}(\mathcal{F}_T)$.

 A scenario $(D^X_t,\mathbb{Q}^X_t)\in\mathcal{D}\times\mathcal{Q}$ is optimal if and only if the density $Z_t^X\triangleq D_t^X \mathbb{Q}_t^X$ is an element of $\partial\rho_t(X)$. Moreover, if there exists $D_{\rho} (Y,X)$ at every $Y \in L^{\infty}(\mathcal{F}_T)$ (in particular, $\rho_t$ is Gateaux differentiable at $X$) then $\partial\rho_t(X)$ is a singleton.
\label{SD}
\end{proposition}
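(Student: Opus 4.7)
The plan is to treat both assertions as consequences of Fenchel--Young duality applied to the representation \eqref{representationSRM} with the minimal penalty \eqref{minpen}. Throughout, I would identify every pair $(D_t,\mathbb{Q}_t)\in\mathcal{D}\times\mathcal{Q}$ with the non-negative density $Z_t \triangleq D_t\cdot\frac{d\mathbb{Q}_t}{d\mathbb{P}}\in L^1_+(\mathcal{F}_T)$. Since $\mathbb{Q}_t|_{\mathcal{F}_t}=\mathbb{P}|_{\mathcal{F}_t}$, one has $D_t\,\mathbb{E}_{\mathbb{Q}_t}[-X|\mathcal{F}_t]=\mathbb{E}[-Z_tX|\mathcal{F}_t]$, so the dual representation and the minimal penalty become the standard convex-duality pair in the single variable $Z_t$.

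For the equivalence, I would observe that optimality of $(D^X_t,\mathbb{Q}^X_t)$ is precisely the Fenchel--Young equality
$$\rho_t(X)+c_t(Z^X_t)=\mathbb{E}[-Z^X_tX|\mathcal{F}_t].$$
On the other hand, the defining inequality of $\partial\rho_t(X)$ in \eqref{eq: subdiff-eq} can be rearranged as
$$\mathbb{E}[-Z^X_tY|\mathcal{F}_t]-\rho_t(Y)\leq \mathbb{E}[-Z^X_tX|\mathcal{F}_t]-\rho_t(X), \qquad \forall Y\in L^\infty(\mathcal{F}_T).$$
Taking the essential supremum over $Y$ and invoking \eqref{minpen} yields $c_t(Z^X_t)\leq \mathbb{E}[-Z^X_tX|\mathcal{F}_t]-\rho_t(X)$, while the admissible choice $Y=X$ gives the reverse inequality. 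Hence $Z^X_t\in\partial\rho_t(X)$ if and only if the Fenchel--Young equality above holds, i.e.\ if and only if $(D^X_t,\mathbb{Q}^X_t)$ is an optimal scenario.

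For the singleton statement, I would start from the subgradient inequality applied to $X+hY$ (with $h>0$ and $Y\in L^\infty(\mathcal{F}_T)$ arbitrary), which gives
$$\frac{\rho_t(X+hY)-\rho_t(X)}{h}\geq \mathbb{E}[-ZY|\mathcal{F}_t].$$
Passing to $h\downarrow 0$ delivers $D_{\rho}^+(Y,X)\geq \mathbb{E}[-ZY|\mathcal{F}_t]$, and the same reasoning with $-Y$ in place of $Y$ yields the matching reverse inequality. Under the assumption that the two-sided derivative $D_{\rho}(Y,X)$ exists for every $Y$, the two bounds coincide and force $\mathbb{E}[-ZY|\mathcal{F}_t]=D_{\rho}(Y,X)$, so the conditional linear functional $Y\mapsto\mathbb{E}[ZY|\mathcal{F}_t]$ is completely determined by $\rho_t$ at $X$. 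Testing the resulting identity with $Y=\operatorname{sgn}(Z-Z')$ for any competing $Z'\in\partial\rho_t(X)$ collapses $\mathbb{E}[\,|Z-Z'|\,|\,\mathcal{F}_t]$ to $0$, whence $Z=Z'$ a.s., so $\partial\rho_t(X)$ is a singleton.

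The main obstacle is mild: the argument is essentially routine convex analysis once the bijection $(D_t,\mathbb{Q}_t)\leftrightarrow Z_t$ is in place (justified, whenever needed, by Proposition 6 in \cite{MastroRos1} already used in the proof of Proposition~\ref{essmaxCARM}). The only subtlety is the manipulation of essential suprema under $\mathcal{F}_t$-conditioning, but for the equivalence this reduces to two inequalities both obtained from \eqref{minpen} by trivial choices of the test variable, and for uniqueness it is only a standard $L^1$--$L^\infty$ duality argument on densities.
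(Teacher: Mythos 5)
Your proposal is correct and follows essentially the same route as the paper: the equivalence is obtained from the dual representation together with the minimal penalty \eqref{minpen} (your Fenchel--Young packaging is just a tidier phrasing of the paper's two inequalities), and the singleton claim is obtained by sandwiching the one-sided difference quotients between $\mathbb{E}[-ZY|\mathcal{F}_t]$ from above and below, exactly as in the paper. The only substantive item the paper adds and you defer to a citation is the verification that an \emph{arbitrary} element $Z\in\partial\rho_t(X)\subseteq L^1_+(\mathcal{F}_T)$ is a sub-probability density with finite penalty (via cash-subadditivity, taking $Y=X-m$ to get $\mathbb{E}[Z|\mathcal{F}_t]\le 1$) and hence decomposes as $D_t\mathbb{Q}_t$ by Proposition 6 of \cite{MastroRos1}; this is not needed for the statement as literally quantified over scenarios in $\mathcal{D}\times\mathcal{Q}$, so it is not a gap in your argument.
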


\begin{proof}

Let $X \in L^{\infty}(\mathcal{F}_T)$ be fixed and assume that $(D_t^X,\mathbb{Q}_t^X)$ is an optimal scenario in \eqref{representationSRM}. Then the associated $\lambda_t^X\triangleq D^X_t\mathbb{Q}^X_t$ is an absolutely continuous sub-probability with respect to $\mathbb{P}$. It then follows that, for any $Y\in L^{\infty}(\mathcal{F}_T)$,
\begin{eqnarray}
    \rho_t(Y)-\rho_t(X)&=&\esssup_{\lambda_t\in\widetilde{\mathcal{S}}}\{\mathbb{E}_{\lambda_t}[-Y\big|\mathcal{F}_t]-c_t(\lambda_t)\}-\left(\mathbb{E}_{\lambda_t^X}[-X\big|\mathcal{F}_t]-c_t(\lambda_t^X)\right) \notag \\
    &\geq& \mathbb{E}_{\lambda_t^X}[-(Y-X)\big|\mathcal{F}_t] , \label{eq: star0000}
\end{eqnarray}
where $\widetilde{\mathcal{S}}\triangleq\left\{\lambda_t \mbox{ sub-probability}: \lambda_t=D_t\mathbb{Q}_t \mbox{ for some } (D_t,\mathbb{Q}_t)\in \mathcal{D}\times\mathcal{Q}\right\}$.
Hence, the density of  the sub-probability $\lambda_t^X$ defined by $Z^X_t\triangleq\frac{d\lambda_t^X}{d\mathbb{P}}=D_t^X\frac{d\mathbb{Q}^X_t}{d\mathbb{P}}$ is an element of the subdifferential $\partial\rho_t(X)$.

Conversely, we are now going to prove that any element of the subdifferential can be identified with an optimal scenario $(D^X_t,\mathbb{Q}^X_t)$. We first prove that, given $X \in L^{\infty}(\mathcal{F}_T)$, if $Z\in\partial\rho_t(X)$ then its $L^1$-norm is less or equal to $1$. Indeed, by taking $Y=X-m$ with $m\in\mathbb{R}_+$ and using cash-subadditivity, it follows that
$$\rho_t(X)+m\geq\rho_t(Y)\geq \rho_t(X)+m \, \mathbb{E}[Z\big|\mathcal{F}_t],$$
where the last inequality is due to $Z \in \partial \rho_t(X)$. This implies that $0\leq\mathbb{E}[Z\big|\mathcal{F}_t]\leq 1$, hence that $0\leq\mathbb{E}[Z]\leq 1$ and $Z$ is a density corresponding to a sub-probability $\lambda$ through $Z\triangleq\frac{d\lambda}{d\mathbb{P}}$. From now on, we identify the sub-probability $\lambda$ with its density, so any element of $\partial\rho_t(X)$ is understood as a sub-probability via this identification. By Proposition 6 in \cite{MastroRos1}, we know that every sub-probability $\lambda_t$ absolutely continuous w.r.t. $\mathbb{P}$ such that $c_{0,t}(\lambda_t)<+\infty$ can be decomposed as $\lambda_t=D_t\mathbb{Q}_t$ with $(D_t,\mathbb{Q}_t)\in\mathcal{D}\times\mathcal{Q}$, where $c_{0,t}$ is the minimal penalty function of the static cash-subadditive (continuous from above) risk measure $\rho_{0,t}(X)\triangleq\mathbb{E}[\rho_t(X)]$ (see Proposition 6 in \cite{MastroRos1}). If $\lambda_t^X\in\partial\rho_t(X)$ (meaning that $Z_t^X \in \partial\rho_t(X)$) then, by applying the $\mathbb{P}$-expectation to the inequality in \eqref{eq: star0000} and by using the fact that $\mathbb{E}_{\mathbb{P}}[\mathbb{E}_{\lambda^X_t}[\cdot\big|\mathcal{F}_t]]=\mathbb{E}_{\lambda_t^X}[\cdot]$,
\begin{equation} \label{eq: ineq-rho}
\mathbb{E}_{\lambda_t^X}[-Y]-\rho_{0,t}(Y)\leq \mathbb{E}_{\lambda_t^X}[-X]-\rho_{0,t}(X) \ \mbox{for every } Y \in L^{\infty}(\mathcal{F}_t).
\end{equation}
By the representation for the penalty term of a static cash-subadditive risk measure (see \cite{ElRa}) and by \eqref{eq: ineq-rho}, we have:
$$c_{0,t}(\lambda_t^X)=\sup_{Y\in L^{\infty}(\mathcal{F}_T)} \left\{\mathbb{E}_{\lambda_t^X}[-Y]-\rho_{0,t}(Y)\right\}=\mathbb{E}_{\lambda_t^X}[-X]-\rho_{0,t}(X) <+\infty.$$
So far, we have proved that $\lambda_t^X$ can be decomposed as  $\lambda_t^X=D_t^X\mathbb{Q}^X_t$. Now, by repeating the same arguments as above by replacing $\rho_{0,t}$ with $\rho_t$, it holds that
$$c_t(D_t^X\mathbb{Q}^X_t)=\mathbb{E}_{\mathbb{Q}^X_t}[-D_t^X X\big|\mathcal{F}_t]-\rho_t(X)$$
and, consequently, that $(D^X_t,\mathbb{Q}^X_t) \in \mathcal{D} \times \mathcal{Q}$ is an optimal scenario.

Finally, it remains to prove that, under the existence of the directional derivative at every direction, $\partial \rho_t(X)$ is a singleton. Assume that there exists $D_{\rho} (Y,X)$ at every $Y \in L^{\infty}(\mathcal{F}_T)$. By proceeding similarly to Remark 3 in \cite{MastroRos}, we have
\begin{equation} \label{eq: right-limit}
\lim_{h\downarrow 0}\frac{\rho_t(X+hY)-\rho_t(X)}{h}\geq \lim_{h\downarrow 0} \frac{\mathbb{E}_{\lambda_t^X}[-hY\big|\mathcal{F}_t]}{h}=\mathbb{E}_{\lambda_t^X}[-Y\big|\mathcal{F}_t],
\end{equation}
and, analogously, for the left-limit:
\begin{equation} \label{eq: left-limit}
\lim_{h \uparrow 0}\frac{\rho_t(X+hY)-\rho_t(X)}{h}\leq \lim_{h \uparrow 0} \frac{\mathbb{E}_{\lambda_t^X}[-hY\big|\mathcal{F}_t]}{h}=\mathbb{E}_{\lambda_t^X}[-Y\big|\mathcal{F}_t],
\end{equation}
where inequalities follow from the dual representation of the risk measure $\rho_t$ and from the assumption that $\lambda_t^X=D^X_t\mathbb{Q}^X_t$, with $(D^X_t,\mathbb{Q}^X_t)\in \mathcal{D}\times\mathcal{Q}$, is an optimal scenario. Existence of the directional derivative at every direction, \eqref{eq: right-limit} and \eqref{eq: left-limit} then imply that $$\nabla\rho_t(X;Y)=\lim_{h\to0}\frac{\rho_t(X+hY)-\rho_t(X)}{h}=\mathbb{E}_{\lambda_t^X}[-Y\big|\mathcal{F}_t].$$
Suppose now that there exist two optimal scenarios $\lambda_{t,1}^X,\lambda_{t,2}^X \in \partial \rho_t (X)$. In this case, it would then hold that $$\nabla\rho_t(X;Y)=\mathbb{E}_{\lambda_{t,1}^X}[-Y\big|\mathcal{F}_t]=\mathbb{E}_{\lambda_{2,t}^X}[-Y\big|\mathcal{F}_t] \quad \mbox{ for any } Y\in L^{\infty}(\mathcal{F}_T),$$ which implies that the two densities coincide almost surely, hence that $\partial\rho_t(X)$ is a singleton.
\end{proof}

Similarly as proved in \cite{MastroRos} for the cash-additive case, the following correspondence between risk measures and CARs holds also in the cash-subadditive case.

\begin{proposition}
a) If $\Lambda_t:L^{\infty}(\mathcal{F}_T)\times L^{\infty}(\mathcal{F}_T)\to L^{\infty}(\mathcal{F}_t)$, with $t\in[0,T]$, is a dynamic CAR satisfying no-undercut, monotonicity, weak-convexity, normalization, 1-time-consistent (resp. time-consistent), and cash-subadditivity, then $\rho_t(X)\triangleq\Lambda_t(X,X)$ is a weak time-consistent (resp. time-consistent) convex cash-subadditive risk measure. \smallskip

\noindent b) Conversely, if $\rho_t$ is a dynamic convex cash-subadditive risk measures that can be represented as in \eqref{representationSRM} with the $\esssup$ attained, then there exists at least a no-undercut, monotone, weakly-convex and 1-cash-subadditive dynamic CAR $(\Lambda_t)_{t\in[0,T]}$ which also satisfies sub-allocation. In particular, $\Lambda_t^{sub}$ is a possible CAR of this kind.
\label{Subproperties}
\end{proposition}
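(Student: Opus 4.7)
The plan is to handle the two directions separately, mimicking the strategy of \cite{MastroRos} for the cash-additive case but being careful about how cash-subadditivity and the discounting factor $D_t^X$ enter the computations.

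For part (a), I would verify each axiom of a cash-subadditive convex risk measure directly from the corresponding property of $\Lambda_t$. Monotonicity follows by chaining the monotonicity of $\Lambda_t$ in the first argument with no-undercut: if $X\geq Y$, then $\rho_t(X)=\Lambda_t(X,X)\leq\Lambda_t(Y,X)\leq\Lambda_t(Y,Y)=\rho_t(Y)$. Convexity is obtained the same way, applying weak-convexity of $\Lambda_t$ at the common ``portfolio'' $Z=\alpha X+(1-\alpha)Y$ and then bounding each term $\Lambda_t(X,Z)$, $\Lambda_t(Y,Z)$ via no-undercut. Normalization of $\rho_t$ is immediate from $\Lambda_t(0,0)=0$, and cash-subadditivity of $\rho_t$ follows instantly from cash-subadditivity of $\Lambda_t$ applied with $X=Y$. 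For weak time-consistency, from 1-time-consistency one has $\Lambda_s(-\rho_t(X),X)=\Lambda_s(X,X)=\rho_s(X)$, and then no-undercut yields $\Lambda_s(-\rho_t(X),X)\leq\Lambda_s(-\rho_t(X),-\rho_t(X))=\rho_s(-\rho_t(X))$. The time-consistent case is obtained analogously by a direct substitution in the defining identity of time-consistency.

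For part (b), I would check that $\Lambda_t^{sub}$ defined in \eqref{subCAR} satisfies all the required axioms, using Proposition \ref{essmaxCARM} (or the assumed attainment of the $\esssup$) as the key ingredient. That $\Lambda_t^{sub}(X,X)=\rho_t(X)$ is built into the definition. No-undercut is immediate, because $\Lambda_t^{sub}(X,Y)=\mathbb{E}_{\mathbb{Q}_t^Y}[-D_t^YX|\mathcal{F}_t]-c_t(D_t^Y\mathbb{Q}_t^Y)$ is a specific element of the family over which $\rho_t(X)$ is taken as essential supremum. Monotonicity in the first argument is clear since $D_t^Y\geq 0$ and conditional expectation is monotone. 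Weak-convexity actually holds with equality: since $D_t^X$ and $\mathbb{Q}_t^X$ only depend on $X$ and the penalty $c_t(D_t^X\mathbb{Q}_t^X)$ is a single random variable, using $\sum_i a_i=1$ one rewrites $\Lambda_t^{sub}(\sum_i a_iX_i,X)$ as $\sum_i a_i\Lambda_t^{sub}(X_i,X)$ by linearity of the conditional expectation. The 1-cash-subadditivity is a short computation: $\Lambda_t^{sub}(X+m_t,Y)=\Lambda_t^{sub}(X,Y)-D_t^Ym_t\geq \Lambda_t^{sub}(X,Y)-m_t$ for $m_t\in L_+^{\infty}(\mathcal{F}_t)$, precisely because $D_t^Y\in[0,1]$.

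Finally, for sub-allocation I would sum $\Lambda_t^{sub}(X_i,X)$ over $i=1,\dots,n$ with $\sum_iX_i=X$: linearity and the fact that the penalty appears $n$ times give $\sum_i\Lambda_t^{sub}(X_i,X)=\mathbb{E}_{\mathbb{Q}_t^X}[-D_t^XX|\mathcal{F}_t]-nc_t(D_t^X\mathbb{Q}_t^X)$, and since $c_t\geq 0$ and $n\geq 1$ this is bounded above by $\rho_t(X)=\Lambda_t^{sub}(X,X)$, giving sub-allocation.

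The only real subtlety, and the place where I expect to have to be most careful, is the weak-convexity step: one has to notice that the optimal scenario $(D_t^X,\mathbb{Q}_t^X)$ is fixed (it depends on $X$, not on the $X_i$) so that the penalty term factors out cleanly and the condition $\sum_i a_i=1$ absorbs it exactly. Without this observation one might mistakenly write weak-convexity as a strict inequality coming from optimization, rather than the equality that actually holds for the subdifferential CAR. Apart from this care, the argument is essentially a direct verification using the dual representation and the non-negativity/boundedness of $(D_t,c_t)$.
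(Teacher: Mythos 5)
Your proof is correct and follows essentially the same route as the paper's: a direct axiom-by-axiom verification, with cash-subadditivity of $\rho_t$ in (a) and 1-cash-subadditivity of $\Lambda_t^{sub}$ in (b) argued exactly as in the paper, and the remaining properties (which the paper delegates to the cash-additive predecessor \cite{MastroRos}) spelled out explicitly and correctly. The only cosmetic slip is writing $\Lambda_t^{sub}(X+m_t,Y)=\Lambda_t^{sub}(X,Y)-D_t^Y m_t$ instead of $\Lambda_t^{sub}(X,Y)-m_t\,\mathbb{E}_{\mathbb{Q}_t^Y}\!\left[D_t^Y\,\middle|\,\mathcal{F}_t\right]$, since $D_t^Y$ need not be $\mathcal{F}_t$-measurable; the required inequality still follows from $D_t^Y\in[0,1]$ and $m_t\geq 0$.
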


\begin{proof}
a) The proof of normalization, convexity, monotonicity, and weak time-consistency (resp. time-consistency) follows immediately from Proposition 5 in \cite{MastroRos}. The cash-subadditivity of $\rho_t$ is easily given by
$$\rho_t(X+m_t)=\Lambda_t(X+m_t,X+m_t)\geq \Lambda_t(X,X)-m_t=\rho_t(X)-m_t$$
true for any $X \in L^{\infty}(\mathcal{F}_T), m_t\in L^{\infty}_+(\mathcal{F}_t)$,
where the inequality is due to cash-subadditivity of $\Lambda_t$.\smallskip

b) We are now going to prove the existence of such a $\Lambda_t$ by checking that, e.g., the subdifferential CAR $\Lambda_t^{sub}$ defined in \eqref{subCAR} satisfies all the properties required. First of all, $\Lambda_t^{sub}(X,X)=\rho_t(X)$ for any $X\in L^{\infty}(\mathcal{F}_T),t\in[0,T]$, by definition of $\Lambda_t^{sub}$. Monotonicity, weakly-convexity, sub-allocation, and no-undercut follow similarly as in Proposition 3 of \cite{MastroRos}. Concerning 1-cash-additivity, it holds that, for any $m_t\in L^{\infty}_+(\mathcal{F}_t), X,Y \in L^{\infty}(\mathcal{F}_T)$,
\begin{eqnarray*}
\Lambda_t^{sub}(X+m_t,Y)&=&\mathbb{E}_{\mathbb{Q}_t^Y}\left[-D^Y_t(X+m_t)\Big|\mathcal{F}_t\right]-c_t(D^Y_t\mathbb{Q}^Y_t) \\
&=&\mathbb{E}_{\mathbb{Q}_t^Y}\left[-D^Y_tX\Big|\mathcal{F}_t\right]-m_t \, \mathbb{E}_{\mathbb{Q}_t^{Y}}[D_t Y|\mathcal{F}_t]-c_t(D^Y_t\mathbb{Q}^Y_t) \\
&\geq& \mathbb{E}_{\mathbb{Q}_t^Y}\left[-D^Y_tX\Big|\mathcal{F}_t\right]-m_t-c_t(D^Y_t\mathbb{Q}^Y_t) \\
&=& \Lambda_t^{sub}(X,Y)-m_t,
\end{eqnarray*}
where the first equality is due to the $\mathcal{F}_t$-measurability of $m_t$, while the inequality follows from $0\leq D_t\leq 1$. The proof is therefore complete.
\end{proof}

Note that while time-consistency of $\Lambda_t$ implies that of $\rho_t$, the converse implication does not necessarily hold true. In the particular case of cash-additive risk measures induced by BSDEs, however, the corresponding CARs are time-consistency, as shown in \cite{MastroRos}.

\subsection{Capital allocation rules and backward equations} \label{sec: cars and bsvies}

We focus now our attention on cash-subadditive risk measures induced by BSDEs. Following El Karoui and Ravanelli \cite{ElRa}, then, we consider a BSDE of the form:
\begin{equation}
\rho_t(X)=-X+\int_t^Tg(s,\rho_s(X),Z^{\rho}_s)ds-\int_t^TZ^{\rho}_s \,dB_s, \quad X \in L^{\infty} (\mathcal{F}_T),
\label{CSDRM}
\end{equation}
where the driver $g(s,y,z) $ is convex in $(y,z)$, decreasing in $y$ and satisfies the standard assumptions. Throughout this section we require that, in the dual representation of $\rho_t$, the optimal scenarios $(D_t,\mathbb{Q}_t)$ belong to $\mathcal{D}'\times\mathcal{Q}$, without the further restriction $D_t\in\mathcal{D}$.\smallskip

Inspired by what done in \cite{MastroRos} for the cash-additive case (where $g$ does not depend on $y$), we investigate now the case of dynamic subdifferential CARs induced by these cash-subadditive risk measures. Differently from the cash-additive case where BSDEs (or BSVIEs) were preserved when moving from risk measures to capital allocations, we show that this happens no more in the cash-subadditive case. For simplicity, we always work with one of the possible optimal scenarios, in order to restrict the family of $(\Lambda_t^{sub})_{t\in[0,T]}$ to one of its element.

We start with the following motivating example.

\begin{example}\label{subexample}
Following the Example 7.2 in \cite{ElRa}, we consider the driver
$$g_{\beta}(t,y,z)=\sup_{r_t\leq \beta_t\leq R_t}(-\beta_t y)=-\bar{\beta}_t y,$$
where $(\beta_t)_{t\in[0,T]}$ is a given ambiguous discount rate with $0 \leq r_t \leq \beta_t \leq R_t$ with $(r_t)_{t\in[0,T]}$ and $(R_t)_{t\in[0,T]}$ being two adapted and bounded processes. It is well known that the BSDE \eqref{CSDRM} with driver $g_{\beta}$ admits as a unique solution:
\begin{equation} \label{eq: rho-ambig}
\rho_t(Y)=\mathbb{E}_\mathbb{P}\left[\left.e^{-\int_t^T\bar{\beta}_sds}(-Y)\right|\mathcal{F}_t\right].
\end{equation}
It then follows that $(\mathbb{P},D^Y_t\triangleq e^{-\int_t^T\bar{\beta}_sds})$ is an optimal scenario for $\rho_t(Y)$ with $c_t(D_t \mathbb{P})=0$ and, by definition,
$$\Lambda^{sub}_t(X,Y)=\mathbb{E}_\mathbb{P}[\left.-D_t^Y X\right|\mathcal{F}_t].$$
Note that here $D^Y_t$ is not necessarily adapted, but only satisfies $D^Y_t\in [0,1]$.
\smallskip

By using the Martingale Representation Theorem, then, for any fixed $t \in [0,T]$ there exists a unique predictable process $Z^{\Lambda^{sub}}(t,\cdot)=Z^{\Lambda^{s}}(t,\cdot)$ defined on $[t,T]$ such that:
\begin{equation} \label{eq: lambda-sub}
\Lambda^{sub}_t(X,Y)=-D^Y_tX-\int_t^T Z^{\Lambda^{s}}(t,s)ds.
\end{equation}
Surprisingly, starting from a cash-subadditive risk measure $\rho_t(Y)$ induced by a BSDE, the dynamics of $\Lambda_t^{sub}(X,Y)$ can be only written in terms of a BSVIE which, by Theorem \ref{EUBSVIE}, admits a unique solution. This can be explained by the fact that, because of the discounting factor $D^Y_t$, also the final condition in \eqref{eq: lambda-sub} depends on $t$.
\end{example}

The previous example underlines how, when dropping cash-additivity, it may be possible to obtain a CAR that does not follow a classical BSDE, even when the associated risk measure is induced by a BSDE. This is different from the cash-additive case and explains why we investigate which kind of backward equation is followed by a subdifferential CAR associated to a cash-subadditive risk measure induced by a BSDE.

The impossibility of obtaining $\Lambda_t(X,Y)$ through a BSDE suggests a lack of time-consistency for CARs induced via cash-subadditive risk measures: this is not necessarily a drawback. Indeed, as argued in Yong \cite{Yong2006,Yong2007} and the references therein, time-inconsistent preferences exist in real markets, so the time-inconsistency of CARs driven by BSVIEs can be used to model investors with such preferences. Moreover, the impossibility to reduce the dynamics of CARs to a BSDE is not completely new: a similar phenomenon, indeed, happens in the case of standard expected utility (see \cite{Nacira,Yong2006,Yong2007}).
\medskip

We are now ready to provide the dynamics of the subdifferential CAR generated by a cash-subadditive risk measure with a generic driver $g$.

\begin{proposition} \label{BSVIEsub}
Let $\rho_t: L^{\infty}(\mathcal{F}_T) \to L^{\infty}(\mathcal{F}_t)$ be a dynamic cash-subadditive convex risk measure induced by a BSDE as in \eqref{CSDRM} whose the driver $g$ is convex in $(y,z)$, decreasing in $y$ and satisfies the standard assumptions. Then, for any $X,Y\in L^{\infty}(\mathcal{F}_T)$, the subdifferential CAR defined as in \eqref{subCAR} is the first component of the solution to the following BSVIE that admits a unique solution $(\Lambda^{sub},Z^{\Lambda^s})\in L^{\infty}((0,T),\mathbb{F})\times L^{\infty}((0,T);\mbox{BMO}(\mathbb{P}))$:
\begin{equation}
    \hspace{-0.6cm}\Lambda^{sub}_t(X,Y) = -D_tX    +\int_t^T \hspace{-0.2cm}g_{\Lambda^{s}}(t,s,\rho_s(Y),Z_s^{\rho(Y)},Z^{\Lambda^s}(t,s)) ds-\int_t^TZ^{\Lambda^{s}}(t,s) \,dB_s,  \label{eq: lambda-sub2}
\end{equation}
where $g_{\Lambda^s}:\Omega\times\Delta\times\mathbb{R}\times\mathbb{R}^n\times\mathbb{R}^n\to\mathbb{R}$ is defined by:
\begin{equation}
\hspace{-0.2cm}g_{\Lambda^{s}}(t,s,\rho_s(Y),Z_s^{\rho(Y)},Z^{\Lambda^s}(t,s)) \triangleq \hspace{-0.1cm}D_{t,s}\left[-g(s,\rho_s(Y),Z^{\rho(Y)}_s)-\beta^Y_s\rho_s(Y)\hspace{-0.1cm}-\mu_s^Y Z^{\rho}_s\right] \hspace{-0.1cm}+ \mu^Y_s Z^{\Lambda^{s}}(t,s),
	\label{subdriver}
\end{equation}
where $D_t\triangleq\exp\left\{-\int_t^T\beta^Y_s ds \right\}$ and $D_{t,s}\triangleq \exp\left\{-\int_t^s\beta^Y_udu \right\}$ with
$(\beta^Y_t,\mu^Y_t)$ being (one of) the optimal scenario in the representation \eqref{BSDEmax} of $\rho_t(Y)$, identifying $\mu_t^Y$ with the corresponding $\mathbb{Q}_t^{\mu^Y}$.

\noindent Furthermore, $\Lambda_t^{sub}(X,Y)$ satisfies no-undercut, monotonicity, 1-cash-subadditivity, weakly-convexity, and sub-allocation.
Moreover, the sub-probability $\lambda_t^Y\triangleq e^{-\int_t^T{\beta}^Y_sds}\, \mathbb{Q}^{{\mu}^Y}_t$ belongs to $\partial\rho_t(Y)$.
\end{proposition}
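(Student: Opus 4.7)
The plan is to start from the dual representation of $\rho_t(Y)$ at its optimal scenario $(\beta^Y,\mu^Y)$, combine the Martingale Representation Theorem (MRT) under $\mathbb{Q}^{\mu^Y}$ with Girsanov, and identify $\Lambda^{sub}_t(X,Y)$ as the value at $r=t$ of a $t$-parameterised family of BSDEs on $[t,T]$, in the same spirit as the BSDE-to-BSVIE passage used in the proof of Theorem~\ref{LinfBSVIE}. The key algebraic ingredient is the Fenchel--Young identity at the optimum,
$$g(s,\rho_s(Y),Z^{\rho(Y)}_s) + \beta^Y_s \rho_s(Y) + \mu^Y_s Z^{\rho(Y)}_s + G(s,\beta^Y_s,\mu^Y_s) = 0,$$
which lets us trade $G$ for $g$ inside the driver and is what eventually produces the form \eqref{subdriver}.

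\textbf{Derivation of the BSVIE.} Comparing \eqref{CFBrepresentation} with \eqref{BSDEmax} applied to $\rho_t(Y)$, the minimal penalty reads $c_t(D_t \mathbb{Q}^{\mu^Y}_t) = \mathbb{E}^{\mathbb{Q}^{\mu^Y}}\!\left[\int_t^T D_{t,s}\,G(s,\beta^Y_s,\mu^Y_s)\,ds\,\big|\,\mathcal{F}_t\right]$, so that the definition \eqref{subCAR} rewrites as
$$\Lambda^{sub}_t(X,Y) = \mathbb{E}^{\mathbb{Q}^{\mu^Y}}\!\left[-D_t X - \int_t^T D_{t,s}\, G(s,\beta^Y_s,\mu^Y_s)\,ds\,\bigg|\,\mathcal{F}_t\right].$$
For each fixed $t\in[0,T)$ I would apply the MRT under $\mathbb{Q}^{\mu^Y}$ to the $\mathbb{Q}^{\mu^Y}$-martingale $M^t_r \triangleq \mathbb{E}^{\mathbb{Q}^{\mu^Y}}[\,\cdot\,|\mathcal{F}_r]$, $r\in[t,T]$, obtaining a predictable $\psi(\cdot;t)$ with $M^t_T - M^t_t = \int_t^T \psi(s;t)\,dB^{\mathbb{Q}^{\mu^Y}}_s$. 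Evaluating at $r=T$, converting $dB^{\mathbb{Q}^{\mu^Y}}_s$ into $dB_s$ via Girsanov, setting $Z^{\Lambda^{s}}(t,s)\triangleq \psi(s;t)$, and substituting the Fenchel identity for $G$ recasts the resulting equation as \eqref{eq: lambda-sub2} with driver \eqref{subdriver}. Defining $Y(t)=\eta(t;t)$ and $Z(t,s)=\zeta(s;t)$ from the corresponding BSDE solutions, exactly as in the proof of Theorem \ref{LinfBSVIE}, makes the passage from the $t$-parameterised BSDE family to a genuine BSVIE solution rigorous.

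\textbf{Well-posedness.} Since $(\beta^Y,\mu^Y)\in\mathcal{A}$ is bounded, $g_{\Lambda^{s}}$ is linear, hence uniformly Lipschitz, in $Z^{\Lambda^{s}}$. Moreover $|D_t|\le 1$ and $X\in L^\infty(\mathcal{F}_T)$ yield $-D_t X \in L^{\infty}((0,T),\mathcal{F}_T)$. For the BMO regularity of $g_{\Lambda^{s}}(t,s,\cdot,\cdot,0) = D_{t,s}\,G(s,\beta^Y_s,\mu^Y_s)$ I would use the Fenchel identity together with the Lipschitz assumption on $g$, the $L^{\infty}$ bound on $\rho(Y)$, the BMO estimate on $Z^{\rho(Y)}$ provided by Proposition \ref{regularityBSDE}, the assumption $g(\cdot,0,0)\in\mathrm{BMO}(\mathbb{P})$, and the boundedness of $(\beta^Y,\mu^Y)$, to obtain
$$\sup_{t\in[0,T]}\Big\|\sup_{r\ge t}\mathbb{E}\Big[\int_r^T |G(s,\beta^Y_s,\mu^Y_s)|^2\,ds\,\Big|\,\mathcal{F}_r\Big]\Big\|_\infty <+\infty.$$
Existence and uniqueness of $(\Lambda^{sub},Z^{\Lambda^{s}})$ in the claimed space then follow from Theorem \ref{LinfBSVIE}.

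\textbf{Axiomatic properties, subdifferential, and main obstacle.} Since $\Lambda^{sub}_t$ is precisely the subdifferential CAR \eqref{subCAR} associated with the cash-subadditive convex risk measure $\rho_t$ admitting the representation \eqref{BSDEmax}, the no-undercut, monotonicity, 1-cash-subadditivity, weak-convexity and sub-allocation properties are inherited from part (b) of Proposition \ref{Subproperties}. The claim $\lambda_t^Y = e^{-\int_t^T \beta^Y_s ds}\, \mathbb{Q}_t^{\mu^Y}\in \partial\rho_t(Y)$ is an immediate consequence of the optimality of $(\beta^Y,\mu^Y)$ in \eqref{BSDEmax} combined with Proposition \ref{SD}. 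The hard part is securing the BMO control of $G(s,\beta^Y_s,\mu^Y_s)$ described above: it is this estimate, hinging on the careful interplay between the Lipschitz/BMO assumptions on $g$ and the regularity of $(\rho(Y),Z^{\rho(Y)})$ from Proposition \ref{regularityBSDE}, that allows one to cast the identification of $\Lambda^{sub}_t$ inside the scope of Theorem \ref{LinfBSVIE} and thus to simultaneously obtain existence, uniqueness and the claimed $L^\infty\times L^\infty(\mathrm{BMO})$ regularity.
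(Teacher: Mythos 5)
Your proposal follows essentially the same route as the paper's proof for the main assertions: you obtain the explicit formula for $\Lambda^{sub}_t(X,Y)$ from the dual representation \eqref{BSDEmax} at the optimal scenario, apply the Martingale Representation Theorem under $\mathbb{Q}^{\mu^Y}$, pass back to $dB_s$ via Girsanov, and use the Fenchel--Young equality $G(s,\beta^Y_s,\mu^Y_s)=-g(s,\rho_s(Y),Z^{\rho(Y)}_s)-\beta^Y_s\rho_s(Y)-\mu^Y_sZ^{\rho(Y)}_s$ to produce the driver \eqref{subdriver} (the paper cites Lemma 7.4(ii) of \cite{ElRa} for exactly this step). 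Your well-posedness argument is also the paper's: Lipschitz continuity in $Z^{\Lambda^s}$ from the boundedness of $\mu^Y$, and the $L^{\infty}((0,T);\mbox{BMO}(\mathbb{P}))$ bound on $g_{\Lambda^s}(t,s,\cdot,\cdot,0)$ from the boundedness of $D_{t,s}$, $(\beta^Y,\mu^Y)$ and $\rho(Y)$ together with $g(\cdot,0,0), Z^{\rho(Y)}\in\mbox{BMO}(\mathbb{P})$ (Proposition \ref{regularityBSDE}), feeding into Theorem \ref{LinfBSVIE}. The axiomatic properties via Proposition \ref{Subproperties} coincide with the paper as well.

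The one point where you genuinely diverge, and where your argument is not quite right as written, is the final claim $\lambda_t^Y=e^{-\int_t^T\beta^Y_sds}\,\mathbb{Q}^{\mu^Y}_t\in\partial\rho_t(Y)$. You invoke Proposition \ref{SD}, but that proposition is stated for optimal scenarios $(D_t,\mathbb{Q}_t)\in\mathcal{D}\times\mathcal{Q}$ with $D_t$ \emph{adapted}, whereas here $D^Y_t=e^{-\int_t^T\beta^Y_sds}$ is $\mathcal{F}_T$-measurable and anticipating; the paper explicitly relaxes to $\mathcal{D}'\times\mathcal{Q}$ in this section precisely for this reason, and therefore proves the subdifferential membership by a separate, self-contained argument: it uses the characterization of the optimizer $(\beta^X_t,\mu^X_t)$ as an element of $\partial g(t,\rho_t(X),Z^X_t)$, writes the difference $\rho_t(Y)-\rho_t(X)$ as a BSDE dominated by a linear BSDE with driver $-\beta^X_t\xi_t$ under $\mathbb{Q}^{\mu^X}$, and concludes via the Comparison Theorem for BSDEs. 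Your claim is still salvageable with a one-line direct argument — the implication ``optimal scenario $\Rightarrow$ subgradient'' in the proof of Proposition \ref{SD} only uses that the representation \eqref{representation} is an attained essential supremum, so evaluating the representing functional at an arbitrary $W$ and at $Y$ and subtracting gives $\rho_t(W)-\rho_t(Y)\geq\mathbb{E}_{\lambda_t^Y}[-(W-Y)\vert\mathcal{F}_t]$ with the penalty terms cancelling — but as cited the proposition does not apply, and you should either state this adapted version of the argument explicitly or reproduce the paper's comparison-theorem proof.
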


\begin{proof}
First of all, the BSVIE \eqref{eq: lambda-sub2} admits a unique solution in $L^{\infty}((0,T),\mathbb{F})\times L^{\infty}((0,T);\mbox{BMO}(\mathbb{P}))$ because the driver $g_{\Lambda^s}$ satisfies all the assumptions of Theorem \ref{LinfBSVIE}. Indeed, the $(\mathcal{F}_t)_{t\in[0,T]}$-progressive measurability follows from the same property of $g(t,y,z)$ and $(\beta^Y_t,\mu^Y_t)$, while the uniform Lipschitzianity w.r.t. $Z^{\Lambda}$ follows from the boundedness of $\mu^Y_t$. Moreover, we have:
 \begin{equation*}
 	|g^{\Lambda^s}(t,s,\rho_s^Y,Z^{\rho(Y)}_s)|^2\leq C(|g(s,0,0)|^2+|Z_s^{\rho(Y)}|^2+1),
 \end{equation*}
where we used $D_{t,s}\in L^{\infty}((0,T),\mathcal{F}_s)$, Lipschitzianity of $g$ and boundedness of $(\beta_t^Y,\mu^Y_t)$ and $\rho_t(Y)$. By integrating and taking the expectation w.r.t. $\mathcal{F}_r$ and the supremum over $r\in[t,T]$,
 \begin{equation*}
\hspace{-0.1cm}	\sup_{r\in[t,T]}\mathbb{E}\left[\left. \int_r^T|g^{\Lambda^s}(t,s,\rho_s^Y,Z^{\rho(Y)}_s)|^2ds \right|\mathcal{F}_r\right]\leq C_1 \hspace{-0.1cm} \sup_{r\in[t,T]}\mathbb{E}\left[\left. \int_r^T\left(|g(s,0,0)|^2+|Z_s^{\rho(Y)}|^2+1\right)ds \right|\mathcal{F}_r\right]\hspace{-0.1cm} \leq \hspace{-0.1cm} C_2
\end{equation*}
for some $C_1,C_2>0$, where the last inequality follows from $g(\cdot,0,0),Z_{\cdot}^{\rho(Y)}\in \mbox{BMO}(\mathbb{P})$ (see Proposition \ref{regularityBSDE}). Since $C_1$ and $C_2$ do not depend on $t$ or $\omega$, 
\begin{equation*}
\sup_{t\in[0,T]}\left\|\sup_{r\in[t,T]}\mathbb{E}\left[\left. \int_r^T|g^{\Lambda^s}(t,s,\rho_s^Y,Z^{\rho(Y)}_s)|^2ds \right|\mathcal{F}_r\right]\right\|_{\infty}\leq C_2<+\infty,
\end{equation*}
thus $g_{\Lambda^s}$ satisfies the assumptions of Theorem \ref{LinfBSVIE}.

The rest of the proof is similar to that used in the proof of Proposition 6 in \cite{MastroRos}.
By the dual representation of $\rho_t(Y)$ with optimal scenario $(\beta^Y_t,\mu^Y_t)$,
\begin{equation*}
    \rho_t(Y)=\mathbb{E}_{\mathbb{Q}^{\mu^Y}}\left[\left. e^{-\int_t^T\beta^Y_s ds} \, (-Y)-\int_t^Te^{-\int_t^s\beta^Y_udu} \, \, G(s,\beta^Y_s,\mu^Y_s)ds \right|\mathcal{F}_t\right].
\end{equation*}
By definition, then,
\begin{equation}
    \Lambda^{sub}_t(X,Y)=\mathbb{E}_{\mathbb{Q}^{\mu^Y}}\left[\left. e^{-\int_t^T\beta^Y_s ds} \, (-X)-\int_t^Te^{-\int_t^s\beta^Y_udu} \, \, G(s,\beta^Y_s,\mu^Y_s)ds \right|\mathcal{F}_t\right],
    \label{subequation}
\end{equation}
where the explicit expression of the penalty term appears.
By the Martingale Representation Theorem, for any fixed $t \in [0,T]$ there exists a predictable process $Z^{\Lambda^s}(t,\cdot)$ such that:
$$\Lambda^{sub}_t(X,Y)=e^{-\int_t^T\beta^Y_s ds}(-X)-\int_t^Te^{-\int_t^s\beta^Y_udu}G(s,\beta^Y_s,\mu^Y_s) \, ds-\int_t^TZ^{\Lambda^s}(t,s) \, dB^{\mu^Y}_s,$$
 where $dB_t^{\mu^Y}=dB_t+\mu^Y_tdt$. Using Lemma 7.4 (ii) of \cite{ElRa} and making the definition of $B_t^{\mu^Y}$ explicit, the first statement of the result follows.

 By construction, $\Lambda_t^{sub}$ is a CAR since  $\Lambda^{sub}_t(Y,Y)=\rho_t(Y)$ for any $Y\in L^{\infty}(\mathcal{F}_T)$. No-undercut, monotonicity, 1-cash-subadditivity, weakly-convexity, and sub-allocation are consequences of Proposition \ref{Subproperties}.

It remains to prove that $\lambda_t^Y \in \partial\rho_t(Y)$. To this aim, we remind (see \cite{ElRa,zali}) that an element $(\beta,\mu)\in\mathbb{R}\times\mathbb{R}^n$ attains the sup in the Fenchel transformation of $g$ (i.e. $G(t,\beta,\mu)=-g(t,y,z)-\beta y-\mu\cdot z$) if and only if $(\beta,\mu)\in\partial g(t,y,z)\triangleq\{(\bar{\beta},\bar{\mu})\in\mathbb{R}\times\mathbb{R}^n: g(t,v,w)\geq g(t,y,z)-\bar{\beta}(v-y)-\bar{\mu}(w-z) \, \mbox{ for every } (v,w)\in\mathbb{R}\times\mathbb{R}^n\}$. Fix now $X\in L^{\infty}(\mathcal{F}_T)$. For every $Y\in L^{\infty}(\mathcal{F}_T)$ it then holds that
\begin{eqnarray*}
-d(\rho_t(Y)-\rho_t(X))&=&\left[g(t,\rho_t(Y),Z^Y_t)-g(t,\rho_t(X),Z^X_t)\right]dt- (Z^Y_t-Z_t^X) dB_t \\
&\geq & -\beta^X_t(\rho_t(Y)-\rho_t(X))dt-\mu^X_t (Z^Y_t-Z_t^X) dt -(Z^Y_t-Z_t^X)dB_t \\
&=& -\beta^X_t(\rho_t(Y)-\rho_t(X))dt- (Z^Y_t-Z_t^X) dB_t^{\mu^X}
\end{eqnarray*}
with final condition $-(Y-X)$, where $dB_t^{\mu^X}\triangleq dB_t+\mu^X_tdt$ and where the inequality is due to the definition of subdifferential of $g$ and to Theorem 7.1 in \cite{ElRa}. Consider now the following BSDE:
$$-d\xi_t=-\beta^X_t\xi_t \, dt -\Gamma_t \, dB^{\mu^X}_t \mbox{ and } \xi_T=-(Y-X),$$
with solution
$$\xi_t=\mathbb{E}_{\mathbb{Q}_t^{\mu^X}}\left[-e^{-\int_t^T\beta_sds} \, (Y-X)\Big|\mathcal{F}_t\right]=\mathbb{E}_{\lambda_t^X}\left[-(Y-X)\Big|\mathcal{F}_t\right],$$
where $\lambda_t^X\triangleq e^{-\int_t^T{\beta}^X_sds} \, \mathbb{Q}^{{\mu}^X}_t$.

By applying the Comparison Theorem for BSDEs (see Theorem 7.1 in \cite{ElRa}) and by the arguments above, we have:
\begin{equation*}
\rho_t(Y)-\rho_t(X)\geq \xi_t=\mathbb{E}_{\lambda_t^X}\left[-(Y-X)\Big|\mathcal{F}_t\right], \quad \mbox{ a.s. } \forall t\in[0,T].
\end{equation*}
Hence $\lambda_t^X$ belongs to $\partial\rho_t(X)$.
\end{proof}
\medskip

In the previous result we have shown that,
given a cash-subadditive risk measure $\rho_t$ induced by a BSDE, there exists a natural way to define $\Lambda_t^{sub}$ as solution to a BSVIE. Once fixed an optimal scenario $(D_t^{\beta},\mathbb{Q}_t^{\mu})$, the one-to-one correspondence between $\rho_t$ and $\Lambda^{sub}_t$ suggests that we can build several different CARs induced by BSVIEs considering the dynamics of the subdifferential CAR instead of using the underlying risk measure.
More in general, following the strategy adopted in \cite{MastroRos}, we can define and build a CAR $\Lambda_t$ as the first component of the solution to the BSVIE:
\begin{equation}
    \Lambda_t(X,Y)=-D_tX+\int_t^Tg_{\Lambda}(t,s,\rho_s(Y),Z_s^{\rho_s(Y)},Z^{\Lambda^s(Y,Y)}(t,s),Z^{\Lambda}(t,s))ds-\int_t^TZ^{\Lambda}(t,s)dB_s,
    \label{CARBSVIE}
\end{equation}
where the driver $g_{\Lambda}(\cdot ,\cdot ,\rho,Z^{\rho},Z^{\Lambda^s},\cdot)$ satisfies the assumptions in Theorem \ref{LinfBSVIE} plus the further condition
\begin{equation}
g_{\Lambda}(t,s,\rho,Z^{\rho},Z^{\Lambda^s},Z^{\Lambda^s})=g_{\Lambda^s}(t,s,\rho,Z^{\rho},Z^{\Lambda^s}) \quad \mbox{ for any } (t,s)\in\Delta, \rho\in\mathbb{R},Z^{\rho},Z^{\Lambda^s}\in\mathbb{R}^n.
\label{HPg}
\end{equation}
We note that the hypothesis \eqref{HPg} guarantees that $\Lambda_t(X,X)=\Lambda^{sub}_t(X,X)=\rho_t(X)$ for every $X\in  L^{\infty}(\mathcal{F}_T)$ and $t\in[0,T]$, implying that $\Lambda_t$ is a CAR. Indeed, according to the uniqueness of the solution for a BSVIE and taking $(\Lambda_t(X,X),Z^{\Lambda}(t,s))=(\Lambda^{sub}_t(X,X),Z^{\Lambda^{s}}(t,s))$, it holds that:
\begin{align*}
\Lambda_t(X,X)&=-D_tX+\int_t^Tg_{\Lambda}(t,s,\rho_s(X),Z_s^{\rho_s(X)},Z^{\Lambda^s(X,X)}(t,s),Z^{\Lambda}(t,s))ds-\int_t^TZ^{\Lambda}(t,s)dB_s \\
&=-D_tX+\int_t^Tg_{\Lambda^s}(t,s,\rho_s(X),Z_s^{\rho_s(X)},Z^{\Lambda^s(X,X)}(t,s))ds-\int_t^TZ^{\Lambda}(t,s)dB_s \\
&=\Lambda_t^{sub}(X,X).
\end{align*}
We stress that, while $\Lambda_t(X,X)$, $\Lambda_t^{sub}(X,X)$ and $\rho_t(X)$ coincide a.s. $\forall t\in[0,T]$, the same does not necessarily holds for $Z^{\Lambda^s(X,X)}\equiv Z^{\Lambda(X,X)}$ and $Z^{\rho(X)}$.

We provide here below some sufficient conditions on the driver $g_{\Lambda}$ under which the induced CAR satisfies some further properties.

\begin{proposition}
Let $g_{\Lambda}:\Omega\times\Delta\times\mathbb{R}\times\mathbb{R}^{n}\times\mathbb{R}^{n}\times\mathbb{R}^{n}\to\mathbb{R}$ be a driver satisfying the assumptions in Theorem \ref{LinfBSVIE} and condition \eqref{HPg} and let $\Lambda$ be defined as in \eqref{CARBSVIE}.\medskip

\noindent i) If $X\geq Y$, with $X,Y\in L^{\infty}(\mathcal{F}_T)$, then $\Lambda_t(X,Z)\leq\Lambda_t(Y,Z)$ for any $Z\in L^{\infty}(\mathcal{F}_T)$ and $t\in[0,T]$.\smallskip

\noindent ii) $\Lambda_t(X+m_t,Y)\geq\Lambda_t(X,Y)-m_t$ for any $X,Y\in L^{\infty}(\mathcal{F}_T), m_t\in L^{\infty}_+(\mathcal{F}_t), t\in[0,T]$.\smallskip

\noindent iii) If $g_{\Lambda}(t,s,\rho,Z^{\rho},Z^{\Lambda^s},0)=0$ for any $(t,s)\in\Delta,\rho\in\mathbb{R},Z^{\rho}, Z^{\Lambda^s}\in\mathbb{R}^n$, then $\Lambda_t(0,Y)=0$ for any $t\in[0,T], Y\in L^{\infty}(\mathcal{F}_T)$.\smallskip

\noindent iv) If $g_{\Lambda}(t,s,\rho,Z^{\rho},Z^{\Lambda^s},Z^{\Lambda})\leq g_{\Lambda^s}(t,s,\rho,Z^{\rho},Z^{\Lambda})$ for any $\rho\in\mathbb{R},Z^{\rho},Z^{\Lambda^s},Z^{\Lambda}\in\mathbb{R}^n,(t,s)\in\Delta$, then $\Lambda_t$ satisfies no-undercut.\smallskip

\noindent v) If $\sum_{i=1}^ng_{\Lambda}(t,s,\rho,Z^{\rho},Z^{\Lambda^s},Z^{\Lambda^i})\leq g_{\Lambda}(t,s,\rho,Z^{\rho},Z^{\Lambda^s},\sum_{i=1}^nZ^{\Lambda^i})$ for any $(t,s)\in\Delta, \rho\in\mathbb{R}$, \linebreak $Z^{\rho},Z^{\Lambda^s},Z^{\Lambda^i}\in\mathbb{R}^n$ (for $ i=1,\dots,n$), then $\Lambda_t$ satisfies sub-allocation for any $t\in[0,T]$.\smallskip

\noindent vi) If $g_{\Lambda}$ is convex in $Z^{\Lambda}$ for any $ (t,s)\in\Delta, \rho\in\mathbb{R},, Z^{\rho},Z^{\Lambda^s}\in\mathbb{R}^n$, then $\Lambda_t$ satisfies weak-convexity.
\end{proposition}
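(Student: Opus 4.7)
The plan is to treat the six parts in three groups according to the tool needed. Parts (i)--(iii) will follow directly from the BSVIE existence/uniqueness and comparison results (Theorem \ref{LinfBSVIE} and Proposition \ref{CTBSVIE}). For (i), I observe that for fixed $Z$ both $\Lambda_t(X,Z)$ and $\Lambda_t(Y,Z)$ solve the BSVIE \eqref{CARBSVIE} with the \emph{same} driver---which depends on the second slot only via $\rho_s(Z)$, $Z^{\rho(Z)}_s$ and $Z^{\Lambda^s(Z,Z)}(t,s)$---and with final conditions $-D_t^Z X \leq -D_t^Z Y$ (since $D_t^Z\in[0,1]$); Proposition \ref{CTBSVIE} then gives the claim. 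For (ii), since $m_t$ is $\mathcal{F}_t$-measurable, the process $s \mapsto \Lambda_s(X,Y) - m_t$ satisfies on $[t,T]$ the BSVIE with final condition $-D_s X - m_t$ and the same driver as $\Lambda_s(X+m_t,Y)$, whose final condition is $-D_s(X+m_t) = -D_s X - D_s m_t$; the pointwise inequality $(1-D_s)m_t \geq 0$ and Proposition \ref{CTBSVIE} yield $\Lambda_t(X+m_t,Y) \geq \Lambda_t(X,Y) - m_t$. For (iii), under the hypothesis $g_\Lambda(\cdots,0)=0$ the pair $(0,0)$ solves the BSVIE with final condition $-D_t\cdot 0=0$, and uniqueness (Theorem \ref{LinfBSVIE}) forces $\Lambda_t(0,Y)\equiv 0$.

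For (iv), I would compare the BSVIEs for $\Lambda_t(X,Y)$ and for $\Lambda_t^{sub}(X,Y)$: both carry the final condition $-D_t^Y X$, while the hypothesis $g_\Lambda \leq g_{\Lambda^s}$ provides a pointwise inequality of the drivers. Proposition \ref{CTBSVIE} then yields $\Lambda_t(X,Y) \leq \Lambda_t^{sub}(X,Y)$; combining this with the no-undercut property of $\Lambda_t^{sub}$ from Proposition \ref{BSVIEsub} and with the identity $\Lambda_t(X,X)=\Lambda_t^{sub}(X,X)$ enforced by \eqref{HPg} gives
\begin{equation*}
\Lambda_t(X,Y) \leq \Lambda_t^{sub}(X,Y) \leq \Lambda_t^{sub}(X,X) = \Lambda_t(X,X).
\end{equation*}

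Parts (v) and (vi) are the delicate ones, and I would tackle them through the family of BSDEs underlying the BSVIE, as in the proof of Theorem \ref{LinfBSVIE}. Fix $t$ and let $\bar g(s,z) \triangleq g_\Lambda(t,s,\rho_s(X), Z^{\rho(X)}_s, Z^{\Lambda^s(X,X)}(t,s), z)$; let $(\eta^{X_i}(\cdot;t),\zeta^{X_i}(\cdot;t))$ solve on $[t,T]$ the BSDE with final condition $-D_t X_i$ and driver $\bar g$, so that $\Lambda_t(X_i,X) = \eta^{X_i}(t;t)$. For (v), set $U_r \triangleq \sum_i \eta^{X_i}(r;t)$ and $\tilde\zeta(s) \triangleq \sum_i \zeta^{X_i}(s;t)$; since $\sum_i X_i = X$, the process $U$ solves
\begin{equation*}
U_r = -D_t X + \int_r^T \bigl[\bar g(s,\tilde\zeta(s)) + \varepsilon(s)\bigr]\,ds - \int_r^T \tilde\zeta(s)\,dB_s,
\end{equation*}
with $\varepsilon(s) \triangleq \sum_i \bar g(s,\zeta^{X_i}(s;t)) - \bar g(s,\tilde\zeta(s)) \leq 0$ by the subadditivity hypothesis. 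The BSDE for $\eta^X(\cdot;t)$ has the same final condition $-D_tX$ and driver $\bar g$, so the classical BSDE comparison theorem (Theorem 2.2 of \cite{EPQ}) gives $U_r \leq \eta^X(r;t)$; evaluating at $r=t$ is sub-allocation. For (vi) the identical argument with weights $a_i$ and convexity in place of subadditivity produces an $\varepsilon(s) \geq 0$, hence the reverse inequality $\sum_i a_i \Lambda_t(X_i,X) \geq \Lambda_t(X,X) = \Lambda_t(\sum_i a_i X_i, X)$, i.e.\ weak-convexity. The main subtlety I anticipate is checking that the perturbed driver $\bar g(s,z) + \varepsilon(s)$ satisfies the hypotheses of the BSDE comparison theorem: Lipschitz continuity in $z$ is inherited from $g_\Lambda$, while integrability of $\varepsilon$ follows from the Lipschitz bound on $g_\Lambda$ together with the BMO-regularity of the $\zeta^{X_i}$'s established in Theorem \ref{LinfBSVIE}.
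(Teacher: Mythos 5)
Your proof is correct and follows essentially the same route as the paper: comparison of BSVIEs with ordered terminal data or ordered drivers (Proposition \ref{CTBSVIE}) for (i), (ii), (iv)--(vi), and uniqueness of the solution for (iii), with (iv) concluded exactly as in the paper via $\Lambda_t(X,Y)\leq\Lambda^{sub}_t(X,Y)\leq\rho_t(X)=\Lambda_t(X,X)$. For (v) and (vi) — which the paper dismisses with ``can be checked easily by applying the Comparison Theorem'' — your detour through the parameterized family of BSDEs with the perturbation term $\varepsilon(s)$ is just an unpacking of how that comparison theorem is itself proved, and your integrability check on $\varepsilon$ via the Lipschitz bound and the BMO-regularity of the $\zeta^{X_i}$ correctly closes the one point the paper leaves implicit.
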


\begin{proof}
\noindent i) If $X\geq Y$ then $-D_tX\leq-D_tY$ are elements of $L^{\infty}([0,T],\mathcal{F}_T;\mathbb{R})$ with continuous paths. The thesis then follows by applying the Comparison Theorem (see Proposition \ref{CTBSVIE}).\smallskip

\noindent ii), v), vi) can be checked easily by applying the Comparison Theorem.\smallskip

\noindent iii) follows immediately by $\Lambda_t(0,Y)\equiv0$ and $Z^{\Lambda}(t,s)\equiv0$ and by the uniqueness of the solution.

\noindent iv) For any $X, Y \in L^{\infty}(\mathcal{F}_T)$
 \begin{eqnarray*}
        \Lambda_t(X,Y) &=&-D_tX+\int_t^Tg_{\Lambda}(t,s,\rho_s(Y),Z_s^{\rho(Y)},Z^{\Lambda^s(Y,Y)}(t,s),Z^{\Lambda}(t,s))ds-\int_t^TZ^{\Lambda}(t,s)dB_s \\
        &\leq & -D_tX+\int_t^Tg_{\Lambda^s}(t,s,\rho_s(Y),Z_s^{\rho(Y)},Z^{\Lambda}(t,s))ds-\int_t^TZ^{\Lambda}(t,s)dB_s \\
        &=& \Lambda^{sub}_t(X,Y)\leq\rho_t(X) \ \mbox{ a.s. } \forall t\in[0,T],
    \end{eqnarray*}
    where the first inequality is given by the Comparison Theorem, the second equality is due to the uniqueness of the solution, while the last inequality follows from Proposition \ref{BSVIEsub} (no-undercut of $\Lambda_t^{sub}$).
\end{proof}

We focus now on two relevant capital allocation rules: the gradient and the Aumann-Shapley, generalized to our context.

\subsubsection{Gradient capital allocation}

Let $\rho_t:L^{\infty}(\mathcal{F}_T)\to L^{\infty}(\mathcal{F}_t)$ be a dynamic cash-subadditive convex risk measure induced by the BSDE \eqref{CSDRM}, where the driver $g$ is convex in $(y,z)$, decreasing in $y$, and satisfies the standard assumptions. Assume also that $g(t,y,z)$ is continuously differentiable w.r.t. $(y,z)\in\mathbb{R}\times\mathbb{R}^n$. Moreover, for $X,Y\in L^{\infty}(\mathcal{F}_T)$, the \textit{dynamic gradient CAR} is defined as
  $$\Lambda^{grad}_t(X,Y)\triangleq \lim_{h \to 0} \frac{\rho_t(X + hY)- \rho_t(X)}{h}=\nabla\rho_t(X;Y).$$
   Note that, under the assumptions above, the existence of the directional derivative of $\rho_t$ at $X$ in every direction is guaranteed by Theorem 2.1 in \cite{Ankirchner}.

\begin{proposition}
Under the assumptions above on $\rho_t$ and on $g$, $\Lambda_t^{grad}(X,Y)$ is the first component of the solution to the following BSVIE:
\begin{equation} \label{eq: dyn-grad}
\Lambda^{grad}_t(X,Y)=-Xe^{-\int_t^T\beta_udu}-\int_t^T \mu_s Z^{grad}(t,s) \, ds-\int_t^T Z^{grad}(t,s) \, dB_s,
\end{equation}
where $\beta_t\triangleq-\partial_{\rho}g \left(t,\rho_t(Y),Z^{\rho(Y)}_t \right)$ and $\mu_t=-\nabla_z g\left(t,\rho_t(Y),Z_t^{\rho(Y)}\right)$.
\enlargethispage{+0.5cm}
\label{gradientCAR}
\end{proposition}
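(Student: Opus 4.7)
The plan is to derive a linear BSDE for the directional derivative of $\rho_t$ by differentiating the BSDE \eqref{CSDRM} with respect to a perturbation parameter, solve it explicitly via a Girsanov change of measure, and recast the resulting closed-form expression as the BSVIE \eqref{eq: dyn-grad} through the Martingale Representation Theorem. Well-posedness of the BSVIE is then read off from Theorem \ref{LinfBSVIE}.

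Concretely, I introduce the family of BSDEs with perturbed terminal datum $-(Y+hX)$, denote their solutions by $(V^h, Z^h)$, so that $(V^0, Z^0) = (\rho(Y), Z^{\rho(Y)})$. Under the standing assumptions $g$ is $C^1$ in $(y,z)$ with Lipschitz gradients, hence Theorem~2.1 in \cite{Ankirchner} together with Proposition \ref{regularityBSDE} yields Gâteaux differentiability of $h\mapsto(V^h,Z^h)$ at $h=0$ in $L^{\infty}((0,T),\mathbb{F})\times\mbox{BMO}(\mathbb{P})$. Commuting $\partial_h|_{h=0}$ with the stochastic integral produces the linearised BSDE
\begin{equation*}
W_t = -X + \int_t^T \bigl[\partial_{\rho} g(s,\rho_s(Y),Z_s^{\rho(Y)})\, W_s + \nabla_z g(s,\rho_s(Y),Z_s^{\rho(Y)})\, W_s^Z\bigr]\, ds - \int_t^T W_s^Z\, dB_s,
\end{equation*}
whose first component is $W_t=\Lambda^{grad}_t(X,Y)$. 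By the definitions of $\beta$ and $\mu$ the driver rewrites as $-\beta_s W_s - \mu_s W_s^Z$, a linear one with coefficients bounded uniformly in $(t,s,\omega)$ thanks to $(\beta,\mu)\in\mathcal{A}$. Introducing $\mathbb{Q}^{\mu}$ via $\mathbb{E}[d\mathbb{Q}^{\mu}/d\mathbb{P}\mid\mathcal{F}_t]=\mathcal{E}(\mu\cdot B)_t$ (so that $B^{\mu}_s=B_s+\int_0^s\mu_r\,dr$ is $\mathbb{Q}^{\mu}$-Brownian), the $\mu$-term is absorbed into the drift and Itô's formula applied to $e^{-\int_0^t \beta_u du}\,W_t$ yields
\begin{equation*}
\Lambda^{grad}_t(X,Y)=W_t=\mathbb{E}_{\mathbb{Q}^{\mu}}\bigl[-X\,e^{-\int_t^T \beta_u du}\,\big|\,\mathcal{F}_t\bigr].
\end{equation*}

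For each fixed $t\in[0,T]$, the MRT applied under $\mathbb{Q}^{\mu}$ to the bounded $\mathcal{F}_T$-measurable random variable $\varphi(t):=-X\,e^{-\int_t^T \beta_u du}$ provides a predictable process $Z^{grad}(t,\cdot)$ with $\varphi(t)=\mathbb{E}_{\mathbb{Q}^{\mu}}[\varphi(t)\mid\mathcal{F}_t]+\int_t^T Z^{grad}(t,s)\,dB^{\mu}_s$; substituting $dB^{\mu}_s=dB_s+\mu_s\,ds$ rearranges exactly into \eqref{eq: dyn-grad}. Existence and uniqueness of the solution in $L^{\infty}((0,T),\mathbb{F})\times L^{\infty}((0,T);\mbox{BMO}(\mathbb{P}))$ are then supplied by Theorem \ref{LinfBSVIE}: the Volterra driver $g_{grad}(t,s,z)=-\mu_s z$ is $y$-independent, Lipschitz in $z$, vanishes at $z=0$, and $\sup_{t\in[0,T]}\|\varphi(t)\|_{\infty}\leq\|X\|_{\infty}$. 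The principal obstacle is to make the differentiation step rigorous, i.e.\ to justify convergence of $(V^h-V^0)/h$ in $L^{\infty}$ and of $(Z^h-Z^0)/h$ in $\mbox{BMO}(\mathbb{P})$ strong enough to commute with both $g$ and the stochastic integral; this is precisely the content of \cite{Ankirchner}. An alternative route is to invoke Proposition \ref{SD}: Gâteaux differentiability of $\rho_t$ forces $\partial\rho_t(Y)$ to be a singleton, so the optimal scenario in \eqref{BSDEmax} is the unique pair $(\beta,\mu)$ and $\Lambda^{grad}_t(X,Y)=\mathbb{E}_{\lambda^Y_t}[-X\mid\mathcal{F}_t]$ with $\lambda^Y_t=e^{-\int_t^T \beta_s ds}\,d\mathbb{Q}^{\mu}/d\mathbb{P}$; the same MRT manipulation then concludes.
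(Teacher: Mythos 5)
Your argument is correct and follows essentially the same route as the paper's proof: invoke Theorem 2.1 of \cite{Ankirchner} to obtain the linearised BSDE for the directional derivative, solve this linear BSDE explicitly under $\mathbb{Q}^{\mu}$ to get $\Lambda^{grad}_t(X,Y)=\mathbb{E}_{\mathbb{Q}^{\mu}_t}[-Xe^{-\int_t^T\beta_u du}\,|\,\mathcal{F}_t]$, and then apply the Martingale Representation Theorem for each fixed $t$ to recast this as the BSVIE \eqref{eq: dyn-grad}. The extra observations you add (well-posedness via Theorem \ref{LinfBSVIE} and the alternative identification through Proposition \ref{SD}) are consistent with the paper but do not change the substance of the argument.
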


\begin{proof}
It is easy to check that all the hypotheses of Theorem 2.1 of \cite{Ankirchner} are fulfilled. Then $\Lambda^{grad}_t(X,Y)$ satisfies
\begin{eqnarray*}
\Lambda^{grad}_t(X,Y)&=&-X-\int_t^T \left(\beta_s\Lambda^{grad}_s(X,Y)- \mu_s Z^{\nabla}_s \right) \, ds-\int_t^T Z^{\nabla}_s \,dB_s \\
&=&-X-\int_t^T\beta_s\Lambda^{grad}_s(X,Y)ds-\int_t^T Z^{\nabla}_s dB^{\mu}_s,
\end{eqnarray*}
where $dB^{\mu}_t=dB_t+\mu_tdt$.
By Proposition 2.2 of \cite{EPQ}, $\Lambda^{grad}$ can be rewritten as
$$\Lambda^{grad}_t(X,Y)=\mathbb{E}_{\mathbb{Q}_t^{\mu}}[e^{-\int_t^T\beta_udu} \, (-X)|\mathcal{F}_t],$$
where $\mathbb{E}_{\mathbb{P}} [\frac{d \mathbb{Q}^{\mu}}{d \mathbb{P}} | \mathcal{F}_t ]=\exp(\mu\cdot B)_t$.
Finally, by the Martingale Representation Theorem, for any fixed $t\in[0,T]$ there exists a predictable process $Z^{\grad}(t,\cdot)$ such that:
$$\Lambda^{grad}_t(X,Y)=-Xe^{-\int_t^T\beta_udu}-\int_t^T Z^{grad}(t,s) \, dB^{\mu}_s,$$ hence the thesis.
\end{proof}
\medskip

Note that the proof of Proposition \ref{gradientCAR} suggests a representation for the gradient CAR $\Lambda_t^{grad}$ as in \eqref{eq: dyn-grad},
which is similar to what we have obtained in Example \ref{subexample}.
It is also worth emphasizing that, if $g$ is decreasing in $y$, sublinear, and continuously differentiable w.r.t. $(y,z)$, then $\Lambda^{sub}_t(X,Y)\equiv\Lambda^{grad}_t(X,Y)$ for any $t\in[0,T]$, by the uniqueness of the solution of the BSVIE and by Proposition \ref{BSVIEsub}.

\subsubsection{Aumann--Shapley capital allocation}

We study now the Aumann-Shapley capital allocation in the present framework. We recall that the relevance and popularity of the Aumann-Shapley CAR is mainly due to its relation with the Aumann-Shapley value and to the connection with cooperative game theory. See \cite{centrone,delnaut,tsanakas} and the references therein for details.

Consider a dynamic cash-subadditive risk measure $(\rho_t)_{t\in[0,T]}$ driven by the BSDE \eqref{CSDRM}. We can define $(\omega\times\omega)$ the following functions:
\begin{eqnarray}
&&\Lambda^{AS}_t(X,Y) \triangleq \int_0^1\mathbb{E}_{\mathbb{Q}_t^{\mu^{\gamma Y}}}\left[\left. -D^{\beta^{\gamma Y}}_tX \right|\mathcal{F}_t \right]d\gamma
    \label{ASCAR} \\
     &&\Lambda^{p-AS}_t(X,Y) \triangleq \int_0^1\Lambda_t^{sub}(X,\gamma Y)d\gamma.
     \label{penCAR}
\end{eqnarray}
Note that the two formulations above coincide when the penalty function $c_t$ is null for any $t\in[0,T]$ (e.g. for sublinear risk measures).
 We refer to $\Lambda_t^{AS}$ as Aumann-Shapley capital allocation, while to $\Lambda^{p-AS}_t$ as \textit{penalized} Aumann-Shapley CAR. These capital allocation rules have already been investigated for both the static and the dynamic case when the underlying risk measure satisfies cash-additivity (see, for example, \cite{centrone,KO,MastroRos}).

\begin{theorem}
Let $\rho_t:L^{\infty}(\mathcal{F}_T)\to L^{\infty}(\mathcal{F}_t)$ be a dynamic cash-subadditive risk measure induced by the BSDE \eqref{CSDRM} where $g$ is convex in $(y,z)$, decreasing in $y$ and satisfies the standard assumptions. Then $\Lambda_t^{AS}:L^{\infty}(\mathcal{F}_T)\times L^{\infty}(\mathcal{F}_T)\to L^{\infty}(\mathcal{F}_t)$ is an $1$-cash-subadditive, monotone, normalized, full allocating CAR, while $\Lambda_t^{p-AS}:L^{\infty}(\mathcal{F}_T)\times L^{\infty}(\mathcal{F}_T)\to L^{\infty}(\mathcal{F}_t)$
is an \textit{audacious} CAR satisfying no-undercut.

Furthermore, for any $X,Y \in L^{\infty}(\mathcal{F}_T)$,
\begin{equation*}
    \Lambda_t^{AS}(X,Y)=\mathbb{E}_{\mathbb{P}}\left[-\hat{L}^Y(T,t)X\Big|\mathcal{F}_t\right]
    \label{ASrep}
\end{equation*}
with
\begin{equation*}
    \hat{L}^Y(T,t)\triangleq\int_0^1L^{\gamma Y}(T,t)d\gamma=\int_0^1\exp\left(-\frac{1}{2}\int_t^T |\mu_s^{\gamma Y}|^2ds-\int_t^T\beta_s^{\gamma Y} ds -\int_t^T  \mu_s^{\gamma Y} dB_s\right)d\gamma,
    \label{densityAS}
\end{equation*}
where, for any fixed $\gamma\in\mathbb{R}$ and $Y\in L^{\infty}(\mathcal{F}_T)$, $(\beta_t^{\gamma Y},\mu_t^{\gamma Y})$ is an optimal scenario in the representation \eqref{BSDEmax} of $\rho_t(\gamma Y)$.
\end{theorem}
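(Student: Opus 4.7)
The plan is to split the proof into (i) the conditional Bayes-type derivation of the representation for $\Lambda_t^{AS}$; (ii) verification of its listed axioms; and (iii) deducing the properties of $\Lambda_t^{p-AS}$ from those of $\Lambda_t^{sub}$.

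For (i), since the density of $\mathbb{Q}^{\mu^{\gamma Y}}$ with respect to $\mathbb{P}$ is $\mathcal{E}(\mu^{\gamma Y}\cdot B)_T$, the Bayes formula gives
$$\mathbb{E}_{\mathbb{Q}_t^{\mu^{\gamma Y}}}\!\left[-D_t^{\beta^{\gamma Y}}X\big|\mathcal{F}_t\right]=\mathbb{E}_{\mathbb{P}}\!\left[-X\,L^{\gamma Y}(T,t)\big|\mathcal{F}_t\right],$$
where $L^{\gamma Y}(T,t)$ combines the discount $D_t^{\beta^{\gamma Y}}=\exp(-\!\int_t^T\beta_s^{\gamma Y}ds)$ with the conditional density ratio $\mathcal{E}(\mu^{\gamma Y}\cdot B)_T/\mathcal{E}(\mu^{\gamma Y}\cdot B)_t=\exp(-\tfrac12\!\int_t^T|\mu_s^{\gamma Y}|^2ds-\!\int_t^T\mu_s^{\gamma Y}dB_s)$. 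Uniform boundedness of admissible pairs in $\mathcal{A}$ (namely $\beta\geq 0$ and $\beta+|\mu|\leq C$) together with $X\in L^{\infty}$ makes the family $\{L^{\gamma Y}(T,t)\}_{\gamma\in[0,1]}$ uniformly integrable, so Fubini allows interchanging the $\gamma$-integral with the conditional expectation and produces the claimed $\Lambda_t^{AS}(X,Y)=\mathbb{E}_{\mathbb{P}}[-X\,\hat L^Y(T,t)|\mathcal{F}_t]$ with $\hat L^Y(T,t)=\int_0^1L^{\gamma Y}(T,t)d\gamma$.

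For (ii), normalization ($X=0$), monotonicity (from $D_t^{\beta^{\gamma Y}}\geq 0$), and 1-cash-subadditivity (using $D_t^{\beta^{\gamma Y}}\in[0,1]$ combined with the $\mathcal{F}_t$-measurability of $m_t\geq 0$, which yields $\int_0^1\mathbb{E}_{\mathbb{Q}_t^{\mu^{\gamma Y}}}[D_t^{\beta^{\gamma Y}}|\mathcal{F}_t]d\gamma\leq 1$) are immediate from the representation. Full allocation then follows from the linearity of $X\mapsto\Lambda_t^{AS}(X,Y)$ once the CAR identity $\Lambda_t^{AS}(X,X)=\rho_t(X)$ is known. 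For the latter I would invoke an envelope theorem on the dual representation \eqref{BSDEmax}: convexity of $\rho_t$ makes $\gamma\mapsto\rho_t(\gamma X)$ convex, hence absolutely continuous and a.e.\ differentiable $\omega$-wise, and holding the optimal pair $(\beta^{\gamma X},\mu^{\gamma X})$ fixed in the inner functional while differentiating in $\gamma$ gives $\tfrac{d}{d\gamma}\rho_t(\gamma X)=\mathbb{E}_{\mathbb{Q}_t^{\mu^{\gamma X}}}[-D_t^{\beta^{\gamma X}}X|\mathcal{F}_t]$ for a.e.\ $\gamma$. Integrating from $0$ to $1$ and using $\rho_t(0)=0$ (the standing normalization, corresponding to $g(\cdot,0,0)\equiv 0$ at the BSDE level) yields the identity, and hence both full allocation and the CAR structure.

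For (iii), both claims reduce to the no-undercut property of $\Lambda_t^{sub}$ established in Proposition \ref{BSVIEsub}, namely $\Lambda_t^{sub}(X,W)\leq\Lambda_t^{sub}(X,X)=\rho_t(X)$ for every $W\in L^{\infty}(\mathcal{F}_T)$. Specialising to $W=\gamma X$ and integrating in $\gamma$ yields the audacity $\Lambda_t^{p-AS}(X,X)\leq\rho_t(X)$; specialising to $W=\gamma Y$ gives $\Lambda_t^{p-AS}(X,Y)\leq\rho_t(X)=\Lambda_t^{sub}(X,X)$, the no-undercut inequality against the stand-alone capital. The main obstacle of the whole argument is the envelope-theorem step in (ii): one must select a jointly measurable family $\gamma\mapsto(\beta^{\gamma X},\mu^{\gamma X})$ of optimal scenarios and justify both the exchange of $\tfrac{d}{d\gamma}$ with the $\esssup$ in \eqref{BSDEmax} and the passage of the limit inside the conditional expectation; boundedness of $\mathcal{A}$, convexity of $\rho_t$, and dominated convergence are the tools I expect to carry this through.
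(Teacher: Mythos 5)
Your overall strategy coincides with the paper's: Bayes' formula plus Fubini for the representation via $\hat L^Y(T,t)$, the axioms read off from that representation (with $1$-cash-subadditivity from $D_t^{\beta^{\gamma Y}}\in[0,1]$, equivalently $\beta^{\gamma Y}\geq 0$), full allocation from linearity in the first argument once the CAR identity is known, and the statements about $\Lambda^{p\text{-}AS}$ reduced to the no-undercut property of $\Lambda^{sub}$ — all of this matches the paper, and part (iii) in particular is exactly the paper's computation. The one step you leave open, the identity $\Lambda_t^{AS}(X,X)=\rho_t(X)$, is where you overcomplicate matters: you do not need to exchange $\tfrac{d}{d\gamma}$ with the $\esssup$, nor dominated convergence, nor an envelope theorem. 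The paper instead sets $F(\gamma)=\rho_t(\gamma X)$ and uses only the subgradient inequality satisfied by the optimal scenario $\lambda_t^{\gamma X}=D_t^{\beta^{\gamma X}}\mathbb{Q}_t^{\mu^{\gamma X}}\in\partial\rho_t(\gamma X)$ (Proposition \ref{prop: optimal}), applied with perturbations $\pm hX$, to obtain the two-sided bound
\begin{equation*}
F'_{-}(\gamma)\;\leq\;\mathbb{E}_{\lambda_t^{\gamma X}}\left[-X\,\big|\,\mathcal{F}_t\right]\;\leq\;F'_{+}(\gamma),
\end{equation*}
valid at every $\gamma$ where an optimal scenario exists. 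Integrating over $\gamma\in[0,1]$ and invoking the elementary fact (Proposition A.4 in \cite{foellmer}) that a convex function satisfies $F'_{-}=F'_{+}$ a.e.\ with $\int_0^1 F'_{\pm}(\gamma)\,d\gamma=F(1)-F(0)=\rho_t(X)$ (using normalization $\rho_t(0)=0$, as you correctly note) squeezes $\Lambda_t^{AS}(X,X)$ to the desired value. This sandwich argument entirely bypasses the limit interchanges you worry about; the only measurability issue that remains is the $\gamma$-measurability of $\gamma\mapsto\mathbb{E}_{\lambda_t^{\gamma X}}[-X|\mathcal{F}_t]$, which is already needed just to define $\Lambda_t^{AS}$ and which the paper (like you) takes for granted. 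With that substitution your proof is complete and agrees with the paper's.
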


\begin{proof}
The guideline of this proof is a straightforward adaptation of Corollary 4.1 in \cite{KO} or Proposition 10 in \cite{MastroRos}, extending those results to the cash-subadditive case. Let us define $F:\mathbb{R}\to L^{\infty}(\mathcal{F}_t)$ as $F(\gamma)=\rho_t(\gamma X)$ for any fixed $t\in[0,T]$ and $X\in L^{\infty}(\mathcal{F}_T)$.  We know that if $(\beta^{H}_t,\mu^{H}_t)$ is an optimal scenario then the sub-probability $\lambda_t^{H}=D_t^{\beta^{H}}\mathbb{Q}_t^{H}$ is an element of $\partial\rho_t(H)$ for any $H\in L^{\infty}(\mathcal{F}_T)$. It holds that
\begin{align*}
   &F'_{-}(\gamma)=\lim_{h\uparrow 0}\frac{\rho_t(\gamma X+hX)-\rho_t(\gamma X)}{h}\leq \mathbb{E}_{\lambda_t^{\gamma X}}[-X|\mathcal{F}_t], \\
   &F'_{+} (\gamma)=\lim_{h\downarrow 0}\frac{\rho_t(\gamma X+hX)-\rho_t(\gamma X)}{h}\geq \mathbb{E}_{\lambda_t^{\gamma X}}[-X|\mathcal{F}_t],
\end{align*}
hence
$$\int_0^1 F'_{-}(\gamma)d\gamma\leq \int_0^1 \mathbb{E}_{\lambda_t^{\gamma X}}[-X|\mathcal{F}_t]d\gamma=\Lambda^{AS}_t(X,X) \leq \int_0^1F'_{+}(\gamma)d\gamma.$$
 By Proposition A.4 of \cite{foellmer}, it holds $F'_{-}=F'_{+}$ a.s. and the chain of equalities:
$$\int_0^1 F'_{-}(\gamma)d\gamma=\Lambda_t^{AS}(X,X) = \int_0^1 F'_{+}(\gamma)d\gamma=F(1)-F(0)=\rho_t(X).$$
Consequently, $\Lambda_t^{AS}$ is a CAR.
From Remark 7.6 in \cite{ElRa}, we know that
$$\frac{d\lambda_t^{\gamma Y}}{d\mathbb{P}}=\exp\left(-\frac{1}{2}\int_t^T |\mu_s^{\gamma Y}|^2ds-\int_t^T\beta_s^{\gamma Y} ds -\int_t^T \mu_s^{\gamma Y} dB_s \right)\triangleq L^{\gamma Y}(T,t)$$ then, by Fubini Theorem,
$$
\Lambda^{AS}_t(X,Y)=\int_0^1 \mathbb{E}_{\lambda^{\gamma Y}}[-X|\mathcal{F}_t]d\gamma =\int_0^1 \mathbb{E}_{\mathbb{P}}[-X L^{\gamma Y}(T,t)|\mathcal{F}_t]d\gamma =\mathbb{E}_{\mathbb{P}}\left[-X\int_0^1 L^{\gamma Y}(T,t)d\gamma\Big|\mathcal{F}_t\right],
$$
hence the thesis.

Monotonicity, normalization, and full allocation are due to standard properties of conditional expectation, while $1$-cash-subadditivity of $\Lambda^{AS}$ can be checked directly. For any $X,Y\in L^{\infty}(\mathcal{F}_T)$ and $m_t\in L^{\infty}(\mathcal{F}_t)$, indeed,
\begin{eqnarray*}
    \Lambda^{AS}_t(X+m_t,Y)&=&\mathbb{E}_{\mathbb{P}}\left[-(X+m_t)\int_0^1L^{\gamma Y}(T,t)d\gamma\Big|\mathcal{F}_t\right] \\
    &=&\mathbb{E}_{\mathbb{P}}\left[-X\int_0^1L^{\gamma Y}(T,t)d\gamma\Big|\mathcal{F}_t\right]+\mathbb{E}_{\mathbb{P}}\left[-m_t\int_0^1L^{\gamma Y}(T,t)d\gamma\Big|\mathcal{F}_t\right] \\
    &=&\Lambda_t^{AS}(X,Y)-m_t\int_0^1\mathbb{E}_{\mathbb{Q}_t^{\mu^{\gamma Y}}}\left[\exp\left(-\int_t^T\beta_s^{\gamma Y}ds\right)\Bigg|\mathcal{F}_t\right]d\gamma \\
    &\geq & \Lambda_t^{AS}(X,Y)-m_t,
\end{eqnarray*}
where the last inequality follows from $\beta^{\gamma Y}_t\geq 0$ for any $\gamma \in[0,1]$.

It remains to prove the part concerning $\Lambda^{p-AS}_t(X,Y)$. It holds that
\begin{align}
    \Lambda^{p-AS}_t(X,X)=\Lambda_t^{AS}(X,X)-\int_0^1c_t(D_t^{\beta^{\gamma X}}\mathbb{Q}_t^{\mu^{\gamma X}})d\gamma=\rho_t(X)-\int_0^1c_t(D^{\beta^{\gamma X}}_t\mathbb{Q}^{\mu^{\gamma X}}_t)d\gamma,
\end{align}
where the penalty term $c_t(D^{\beta^{\gamma X}}_t\mathbb{Q}^{\mu^{\gamma X}}_t)$ is given in \eqref{minpen}.
Then
$\Lambda^{AS}_t(X,Y)\leq \rho_t(X)$, i.e. it is an audacious CAR. Furthermore, the no-undercut property follows from
$$\Lambda^{p-AS}_t(X,Y)=\int_0^1 \left( \mathbb{E}_{\lambda_t^{\gamma Y}}[-X|\mathcal{F}_t]-c_t(\lambda_t^{\gamma Y}) \right) \, d\gamma \leq \int_0^1\rho_t(X)d\gamma=\rho_t(X).$$
\end{proof}

\section{Further examples}\label{sec: examples}

In this section, we provide some examples of CARs driven by BSVIEs with different drivers. More precisely, we show that the marginal capital allocation and a generalized marginal capital allocation can be obtained via the approach in \eqref{CARBSVIE}. A further example will deal with the here defined \textit{cash-subadditive entropic risk measure} (CSERM) that allows to ``build'' different CARs reflecting investor's beliefs.

\subsection{Dynamic marginal capital allocation rule}

Let $\rho_t$ be a dynamic cash-subadditive risk measure driven by a BSDE.
Consider the dynamic marginal capital allocation rule, defined as
\begin{equation*}
\Lambda^{M}_t(X,Y) \triangleq\rho_t(Y)-\rho_t(Y-X), \quad X,Y \in L^{\infty}(\mathcal{F}_T).
\end{equation*}
Financially speaking, such an allocation quantifies the contribution of sub-portfolio $X$ to the riskiness of the overall position $Y$. See Tasche \cite{Tasche} in the static case for details.

The dynamic marginal capital allocation can be obtained through the construction in \eqref{CARBSVIE} by means of a BSVIE. Indeed,
    \begin{eqnarray*}
       \hspace{-0.8cm}&& \Lambda^{M}_t(X,Y) =\Lambda^{sub}_t(Y,Y)-\Lambda^{sub}_t(Y-X,Y-X) \\
        \hspace{-0.8cm}&& = -D_tX \\
        \hspace{-0.8cm}&&+\hspace{-0.1cm}\int_t^T \hspace{-0.1cm}\left[g_{\Lambda^s}(t,s,\rho_s(Y),Z^{\rho(Y)}_s,Z^{\Lambda^s(Y,Y)}(t,s))
         \hspace{-0.1cm} - \hspace{-0.1cm}g_{\Lambda^{s}}(t,s,\rho_s(Y-X),Z_s^{\rho(Y-X)},Z^{\Lambda^s(Y-X,Y-X)}(t,s)) \right] \hspace{-0.1cm}ds \\
        \hspace{-0.6cm}&& -\int_t^T \left( Z^{\Lambda^s(Y,Y)}(t,s)-Z^{\Lambda^s(Y-X,Y-X)}(t,s) \right) \, dB_s.
        \end{eqnarray*}
       Defining $Z^{\Lambda^M}(t,s)\triangleq Z^{\Lambda^s(Y,Y)}(t,s)-Z^{\Lambda^s(Y-X,Y-X)}(t,s)$ and
        \begin{eqnarray*}
        \hspace{-0.4cm}&& g_{\Lambda^M}(t,s,\rho_s(Y),Z^{\rho(Y)}_s,Z^{\Lambda^s(Y)}(t,s),Z^{\Lambda^M}(t,s))
         \\
        \hspace{-0.4cm} && \triangleq  g_{\Lambda^s}(t,s,\rho_s(Y),Z^{\rho(Y)}_s,Z^{\Lambda^s(Y,Y)}(t,s))
        -g_{\Lambda^s}(t,s,\rho_s(Y-X),Z_s^{\rho(Y-X)},Z^{\Lambda^{s}(Y,Y)}(t,s)-Z^{\Lambda^M_s}(t,s)),
         \end{eqnarray*}
         then $g_{\Lambda^M}$ satisfies condition \eqref{HPg} when $g_{\Lambda^s}(t,s,\rho,Z^{\rho},0)=0$ for any $(t,s)\in\Delta$, $(\rho,Z^{\rho})\in\mathbb{R}\times\mathbb{R}^n$ (true, for instance, when $\rho_t$ is positively homogeneous).
         It then follows that the approach in \eqref{CARBSVIE} covers also the case of the dynamic marginal CAR.
         
        \subsection{Generalized marginal dynamic CARs}

        Let us consider again a cash-subadditive risk measure $\rho_t$ induced via a BSDE. Starting from the classical marginal capital allocation rule seen before, we now define a dynamic generalized marginal capital allocation rule.

        Given a stochastic adapted $\mathbb{R}^n$-valued process $(\lambda_t)_{t\in[0,T]}\in \mbox{BMO}(\mathbb{P})$ (see Section 7.1.2 \cite{BaEl} for further details), we now consider a dynamic CAR $\Lambda$ with the following driver
         \begin{eqnarray*}
             && g_{\Lambda}(t,s,\rho_s(Y),Z^{\rho(Y)}_s,Z^{\Lambda^s(Y,Y)}(t,s),Z^{\Lambda}(t,s))\\
             &&= g_{\Lambda^s}(t,s,\rho_s(Y),Z^{\rho(Y)}_s,Z^{\Lambda^s(Y,Y)}(t,s))
             + \lambda_s (Z^{\Lambda}(t,s)-Z^{\Lambda^s(Y,Y)}(t,s)).
         \end{eqnarray*}
         In such a case, condition \eqref{HPg} is automatically satisfied.

         As shown in a while, the corresponding $\Lambda$ can be interpreted as a dynamic generalized marginal capital allocation since
         \begin{equation*}
            \Lambda_t(X,Y)=\rho_t(Y)-\rho^{\lambda}_t(Y-X),
        \end{equation*}
         where $\rho^{\lambda}$ is a dynamic risk measure that will defined here below and depending on $(\lambda_t)_{t\in[0,T]}$.

         Evaluating the difference between $\Lambda_t(X,Y)$ and $\rho_t(Y)$ we get:
         \begin{eqnarray*}
             && \Lambda_t(X,Y)-\rho_t(Y)=\Lambda_t(X,Y)-\Lambda_t^{sub}(Y,Y) \\
             &&= -D_t(X-Y)+\int_t^T \lambda_s (Z^{\Lambda}(t,s)-Z^{\Lambda^s(Y,Y)}(t,s)) \, ds -\int_t^T \left(Z^{\Lambda}(t,s)-Z^{\Lambda^s(Y,Y)}(t,s)\right) \, dB_s  \\
             &&= -D_t(X-Y)-\int_t^T \left( Z^{\Lambda}(t,s)-Z^{\Lambda^s}(t,s) \right) \, dB^{\lambda}_s,
         \end{eqnarray*}
         where $dB^{\lambda}_t\triangleq
         dB_t-\lambda_tdt$ is a Brownian motion w.r.t. $(\mathcal{F}_t)_{t\in[0,T]}$ because $(\lambda_t)_{t\in[0,T]}\in \mbox{BMO}(\mathbb{P})$, while $D_t=D_t^Y= \exp\{-\int_t^T \beta_s^Y ds \}$ where $\beta^Y_t$ is the first component of the optimal scenario in the representation of $\rho_t(Y)$. Taking the conditional expectation in the previous expression, it holds that
         \begin{equation*}
             \Lambda_t(X,Y)-\rho_t(Y)=\mathbb{E}_{\mathbb{Q}^{\lambda}}[-D_t(Y-X)\vert \mathcal{F}_t],
         \end{equation*}
        where $\frac{d\mathbb{Q}^{\lambda}}{d\mathbb{P}}\triangleq\mathcal{E}(\int_0^{\cdot}\lambda_t dB_t)_T$ is the measure corresponding to $B^{\lambda}$, defined via the stochastic exponential. $\mathbb{E}_{\mathbb{Q}^{\lambda}}[-D_t(Y-X)\vert \mathcal{F}_t]$ turns out to be the component $\rho^{\lambda}_t(Y-X)$ of the (unique) solution to the linear BSDE:
        \begin{eqnarray*}
            \rho^{\lambda}_t(Y-X) & =& -(Y-X)+\int_t^T \left(-\beta^Y_s\rho_s^{\lambda}+ \lambda_s Z^{\rho^{\lambda}}_s\right) ds-\int_t^TZ_s^{\rho^{\lambda}} \,dB_s \\
            & =& -(Y-X)+\int_t^T-\beta^Y_s\rho_s^{\lambda}ds-\int_t^T Z^{\rho^{\lambda}}_s \, dB^{\lambda}_s.
        \end{eqnarray*}
        $\rho^{\lambda}_t$ is then a cash-subadditive risk measure because the driver $g_{\rho^{\lambda}}(t,\rho^{\lambda},Z^{\rho^{\lambda}})=-\beta^Y_t\rho^{\lambda}+ \lambda_t Z^{\rho^{\lambda}}$  is convex (or, better, linear) in $(\rho^{\lambda},Z^{\rho^{\lambda}})$ and non-increasing in $\rho^{\lambda}$. It then follows that
        \begin{equation*}
            \Lambda_t(X,Y)=\rho_t(Y)-\rho^{\lambda}_t(Y-X).
        \end{equation*}
        Such a $\Lambda$ can be therefore interpreted as a generalized marginal CAR,
         being the difference between the risk associated to the total portfolio $Y$ through the underlying risk measure $\rho_t$ and the marginal contribution of the sub-portfolio $(Y-X)$ evaluated via a different cash-subadditive risk measure $\rho_t^{\lambda}$. Furthermore, $\rho_t^{\lambda}$ can be understood as the degree of confidence- modulated by $\lambda_s$- of the investor in the market num\'{e}raire (see Section 6.2 in Barrieu and El Karoui \cite{BaEl}) when the agent also faces with an ambiguous interest rate $\beta^Y_t$.

        \subsection{Cash-subadditive entropic risk measures and capital allocations}

       In this section, we extend the classic entropic risk measure (solving a BSDE with driver $g_{\rho^E}(t,Z^{\rho^E})=\frac{|Z^{\rho^E}|^2}{2\gamma}$ where $\gamma>0$ is the risk aversion of the agent) to the cash-subadditive setting.
       Inspired by the work \cite{ElRa},  we consider a driver whose local behaviour is:
       $$\mathbb{E}_{\mathbb{P}}[-d\rho^{SE}_t\vert\mathcal{F}_t]=g_{\rho^{SE}} \, dt\triangleq \left(-\beta_t\rho^{SE}+\frac{|Z^{\rho^{SE}}|^2}{2\gamma} \right)dt,$$
        where $\beta_t$ is a positive, bounded and adapted process with the usual interpretation as an ambiguous discount rate. In particular, this risk measure is locally compatible with an agent with a certain risk aversion coefficient $\gamma>0$ and adverse to interest rate ambiguity (see \cite{BaEl,ElRa} for a thorough discussion). Clearly, when no ambiguity is possible, $\beta_t\equiv 0$ and we regain the classic entropic risk measure. It is evident that $g_{\rho^{SE}}$ does not satisfy a Lipschitz condition, being quadratic in $Z^{\rho^{SE}}$. Nevertheless, the theory we have developed is still robust. Indeed, representation \eqref{BSDEmax} holds also for risk measures with quadratic growth (see Theorem 7.5 in \cite{ElRa}). Moreover, $g_{\rho^{SE}}$ is differentiable both in $\rho^{SE}$ and $Z^{\rho^{SE}}$, hence
        \begin{eqnarray*}
        &&g_{\Lambda^s}(t,s,\rho^{SE},Z^{\rho^{SE}},Z^{\Lambda^s}) \\
        &&=D_{t,s}\left[-g_{\rho^{SE}}(s,\rho^{SE}_s,Z^{\rho^{SE}}_s)-\beta_s\rho_s^{SE}+ \frac{|Z^{\rho^{SE}}|^2}{\gamma} \right]-\frac{1}{\gamma}  Z^{\rho^{SE}}_s Z^{\Lambda^s}(t,s) \\
        &&=\frac{D_{t,s}}{2\gamma}|Z^{\rho^{SE}}_s|^2-\frac{1}{\gamma} Z^{\rho^{SE}}_s Z^{\Lambda^s}(t,s),
        \end{eqnarray*}
        because of the relation between gradient and subdifferential:
        \begin{eqnarray*}
        &&\partial^{sub}_{\rho}g(t,\rho^{SE}_t,Z^{\rho^{SE}}_t)=-\partial_{\rho}g(t,\rho_t^{SE},Z^{\rho^{SE}}_t)=\beta_t, \\
        &&\partial^{sub}_{z}g(t,\rho^{SE}_t,Z^{\rho^{SE}}_t)=-\partial_{z}g(t,\rho_t^{SE},Z^{\rho^{SE}}_t)=-\frac{Z^{\rho^{SE}}_t}{\gamma}.
        \end{eqnarray*}
        Although it is not clear if the BSVIE with driver $g_{\Lambda^s}(t,s,\rho^{SE},Z^{\rho^{SE}},Z^{\Lambda^s})$ admits a unique solution (since the Lipschitz coefficient $Z^{\rho^{SE}}_s$ is no longer bounded and $g(t,s,\rho^{SE},Z^{\rho^{SE}},0)=\frac{D_{t,s}}{2\gamma}|Z^{\rho^{SE}}_s|^2$ is not square integrable), the existence and uniqueness of the solution is proved in Lemma \ref{LemmaEU} here below.

        We define now a generalized notion of entopic risk measure in the cash-subadditive case, with interest rate $(\beta_t)_{t\in[0,T]}$ (same as before) and different risk aversion parameter $\gamma_1>0$, i.e. $$g_{\rho^{\gamma_1}}(t,\rho^{\gamma_1},Z^{\rho^{\gamma_1}})=-\beta_t\rho^{\gamma_1}+\frac{1}{2\gamma_1} |Z^{\rho^{\gamma_1}}|^2.$$
        The corresponding dynamic risk measure is a cash-subadditive and convex risk measure that will be called \textit{cash-subadditive entropic risk measure} (CSERM).

        Set now
        \begin{eqnarray*}
        &&g_{\Lambda^{SE}}(t,s,\rho^{SE}_s(Y),Z^{\rho^{SE}(Y)}_s,Z^{\Lambda^s(Y,Y)}(t,s),Z^{\Lambda^{SE}}(t,s))\\
        &&\triangleq g_{\Lambda^s}(t,s,\rho^{SE}_s(Y),Z^{\rho^{SE}(Y)}_s,Z^{\Lambda^s(Y,Y)}(t,s))+
         \frac{2}{\gamma_1}Z^{\rho^{\gamma_1}}_s (Z^{\Lambda^{SE}}(t,s)-Z^{\Lambda^s}(t,s)).
        \end{eqnarray*}
        Lemma \ref{LemmaEU} guarantees then that the BSVIE with driver $g_{\Lambda^{SE}}$ admits a unique solution. As seen below, if a solution (with whatsoever regularity) is provided then $\Lambda^{SE}_t$ is a bounded process. Following the same steps as in the generalized marginal CARs case,
       \begin{equation*}
           \Lambda^{SE}_t(X,Y)=\rho^{SE}_t(Y)-\rho_t^{\nabla}(Y-X),
       \end{equation*}
        where
        $$\rho_t^{\nabla}(Y-X)=-(Y-X)+\int_t^T \left[-\beta_s\rho_s^{\nabla}(Y-X)+\frac{2}{\gamma_1}  Z_s^{\rho^{\gamma_1}} Z_s^{\rho^{\nabla}} \right] ds-\int_t^TZ^{\rho^{\nabla}}_sdB_s$$
        admits a unique bounded solution. Furthermore,
        \begin{equation*}
        \Lambda^{SE}_t(X,Y)=\rho_t^{SE}(Y)-\nabla\rho_t^{\gamma_1}(Y-X;Y),
        \label{SEcar}
        \end{equation*}
    where $\nabla\rho_t^{\gamma_1}$ is the gradient capital allocation considering $\rho_t^{\gamma_1}$ as underlying risk measure (see Kromer and Overbeck \cite{KO} for further details).
        Indeed,
        \begin{equation}
        	\rho^{\nabla}_t(Y-X)=\nabla\rho^{\gamma_1}_t(Y-X;Y) \mbox{ a.s.}
        \label{gradientP}
        \end{equation}
        because, according to Theorem 2.1 in \cite{Ankirchner},
        $$\nabla\rho^{\gamma_1}_t(Y-X;Y)=-(Y-X)+\int_t^T \left[-\beta_s\nabla\rho^{\gamma_1}_s(Y-X;Y)+\frac{2}{\gamma_1}  Z^{\rho^{\gamma_1}}_s \nabla Z^{\rho^{\gamma_1}}_s \right]ds-\int_t^T\nabla Z^{\rho^{\gamma_1}}_s dB_s$$
        and by uniqueness of the solution.

        Summing up, the capital allocation rules associated to the driver $g_{\Lambda^{SE}}$ can be expressed in terms of the difference between the risk of the total portfolio $Y$ measured by the cash-subadditive entropic risk measure with risk aversion $\gamma$ and the risk of the margin portfolio $(Y-X)$ given by the Gateaux derivative of the CSERM  with risk aversion parameter $\gamma_1$.\medskip
        
Finally, we prove the result on the existence and uniqueness in a weaker sense mentioned before.
\begin{lemma}\label{LemmaEU}
	In the previous setting, there exists a unique solution $(\Lambda^{sub},Z^{s})\in \mathcal{H}^{1,\infty}$ to the BSVIE
	\begin{equation}
		\Lambda^{sub}_t(X,Y)=-D_tX+\int_t^T \left[ \frac{1}{2\gamma}D_{t,s}|Z^{\rho^{SE}}_s|^2-\frac{1}{\gamma}  Z^{\rho^{SE}}_s Z^{\Lambda^s}(t,s) \right]ds-\int_t^T Z^{\Lambda^s}(t,s)dB_s.
\label{eqnr}
	\end{equation}
Moreover, the BSVIE
	\begin{equation}
		\label{eqCSERM}
		\Lambda_t^{SE}(X,Y)=-D_tX+\int_t^T g_{\Lambda^{SE}}(t,s,\rho_s^{SE}(Y),Z^{\rho^{SE}}_s,Z^{\Lambda^s(Y,Y)}(t,s),Z^{SE}(t,s))ds-\int_t^T Z^{SE}(t,s)dB_s
	\end{equation}
	admits a unique solution $(\Lambda^{SE},Z^{\Lambda^{SE}})\in \mathcal{H}^{1,\infty}$ with the further regularity $\Lambda^{SE}\in  L^{\infty}((0,T),\mathbb{F}).$
\end{lemma}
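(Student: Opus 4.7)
The plan is to turn both BSVIEs into equations whose drivers no longer contain the unknown $Z$-process by means of a Girsanov change of measure, exploiting the BMO regularity of $Z^{\rho^{SE}}$ and $Z^{\rho^{\gamma_1}}$. Both of these processes are the martingale parts of quadratic BSDEs with bounded terminal condition and a linear-in-$y$ plus quadratic-in-$z$ driver, hence belong to $\mbox{BMO}(\mathbb{P})$ by \cite{kob,zheng}. In particular, for any real constant $c$, the stochastic exponential $\mathcal{E}(c\int_0^{\cdot}Z^{\rho^{SE}}_s\, dB_s)$ is a true $\mathbb{P}$-martingale by Kazamaki's criterion, and the BMO property is preserved under the associated measure change (see e.g.\ \cite{BaEl}).

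For \eqref{eqnr}, introduce $\tilde{\mathbb{Q}}$ with density $\mathcal{E}(-\tfrac{1}{\gamma}\int_0^\cdot Z^{\rho^{SE}}_s\, dB_s)_T$, so that $\tilde{B}_t\triangleq B_t+\tfrac{1}{\gamma}\int_0^t Z^{\rho^{SE}}_s\,ds$ is $\tilde{\mathbb{Q}}$-Brownian. Substituting $dB_s=d\tilde{B}_s-\tfrac{1}{\gamma}Z^{\rho^{SE}}_s\,ds$ in \eqref{eqnr} exactly cancels the cross term $-\tfrac{1}{\gamma}Z^{\rho^{SE}}_s Z^{\Lambda^s}(t,s)\,ds$, leaving
$$\Lambda^{sub}_t(X,Y)=-D_tX+\int_t^T\frac{D_{t,s}}{2\gamma}|Z^{\rho^{SE}}_s|^2\,ds-\int_t^T Z^{\Lambda^s}(t,s)\,d\tilde{B}_s.$$
Taking the $\mathcal{F}_t$-conditional expectation under $\tilde{\mathbb{Q}}$ yields the unique candidate
$$\Lambda^{sub}_t(X,Y)=\mathbb{E}_{\tilde{\mathbb{Q}}}\Big[-D_tX+\int_t^T\tfrac{D_{t,s}}{2\gamma}|Z^{\rho^{SE}}_s|^2\,ds\,\Big|\,\mathcal{F}_t\Big],$$
which is essentially bounded uniformly in $t$ since $X$, $D_t$ and $D_{t,s}$ are bounded and since BMO (hence uniform conditional bounds for $\int_t^T|Z^{\rho^{SE}}_s|^2\,ds$) is preserved by the Girsanov transform. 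The process $Z^{\Lambda^s}(t,\cdot)$ is then recovered by applying the Brownian martingale representation theorem (w.r.t.\ $\tilde{B}$) to the $\tilde{\mathbb{Q}}$-martingale $r\mapsto\mathbb{E}_{\tilde{\mathbb{Q}}}[-D_tX+\int_t^T\tfrac{D_{t,s}}{2\gamma}|Z^{\rho^{SE}}_s|^2\,ds\,|\,\mathcal{F}_r]$ on $[t,T]$, and the BMO bound translates into a uniform-in-$t$ bound on $\mathbb{E}[(\int_t^T|Z^{\Lambda^s}(t,s)|^2\,ds)^{1/2}]$, which is precisely the second component of $\mathcal{H}^{1,\infty}$. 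Uniqueness follows by applying the very same change of measure to the difference of two solutions: the resulting BSVIE has null terminal datum and a linear-in-$\delta Z$ driver, and becomes a $\tilde{\mathbb{Q}}$-martingale that must be identically zero, forcing also $\delta Z\equiv 0$ by representation.

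The BSVIE \eqref{eqCSERM} has the same linear structure in the new unknown: expanding $g_{\Lambda^{SE}}$, the only term multiplying $Z^{\Lambda^{SE}}$ is $\tfrac{2}{\gamma_1}Z^{\rho^{\gamma_1}}_s Z^{\Lambda^{SE}}(t,s)$, while every other piece involves only $Z^{\rho^{SE}}$, $Z^{\rho^{\gamma_1}}$ and the already-resolved $Z^{\Lambda^s(Y,Y)}$ (from the first step), all of which are BMO. Applying a second Girsanov change of measure via $\mathcal{E}(\tfrac{2}{\gamma_1}\int_0^\cdot Z^{\rho^{\gamma_1}}_s\,dB_s)$ (again a true martingale by Kazamaki, since $Z^{\rho^{\gamma_1}}\in\mbox{BMO}(\mathbb{P})$) eliminates the $Z^{\Lambda^{SE}}$-linear term and reduces \eqref{eqCSERM} to a BSVIE of the same form as \eqref{eqnr}. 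The identical existence, uniqueness and $L^\infty$-regularity argument then produces $(\Lambda^{SE},Z^{\Lambda^{SE}})\in\mathcal{H}^{1,\infty}$ with the further regularity $\Lambda^{SE}\in L^\infty((0,T),\mathbb{F})$.

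The main obstacle is precisely what makes Theorems \ref{EUBSVIE}--\ref{LinfBSVIE} inapplicable here: the coefficient $-\tfrac{1}{\gamma}Z^{\rho^{SE}}_s$ of $Z^{\Lambda^s}$ is not bounded and the inhomogeneity $\tfrac{D_{t,s}}{2\gamma}|Z^{\rho^{SE}}_s|^2$ is not square integrable, so neither the classical Lipschitz $L^2$ well-posedness nor the bounded-coefficient $L^\infty$ theory can be invoked directly. The BMO framework is what makes everything work simultaneously: BMO legitimises both Girsanov transforms through Kazamaki, is preserved under the new measures, and turns the quadratic forcing into a bounded conditional expectation that delivers the essential-boundedness of the candidate $\Lambda^{sub}_t$ and $\Lambda^{SE}_t$.
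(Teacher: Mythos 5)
Your treatment of \eqref{eqnr} is essentially the paper's own argument: the same Girsanov change of measure eliminates the term linear in $Z^{\Lambda^s}$, the candidate solution is the conditional expectation of $-D_tX+\int_t^T\tfrac{D_{t,s}}{2\gamma}|Z^{\rho^{SE}}_s|^2ds$ under the new measure, and the $\mathcal{H}^{1,\infty}$ bounds come from the BMO property of $Z^{\rho^{SE}}$ (the paper phrases this through a $t$-parameterized family of BSDEs and the $L^1$ estimates of Fan, but the content is the same). Your uniqueness arguments for both equations (the difference of two solutions is a martingale under the transformed measure with zero terminal value) also coincide with the paper's.

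For \eqref{eqCSERM}, however, there is a genuine gap. You claim that after the second change of measure the equation is ``of the same form as \eqref{eqnr}'' because all remaining coefficients, including $Z^{\Lambda^s(Y,Y)}$, are BMO. But the first part of the lemma only delivers $Z^{\Lambda^s(Y,Y)}(t,\cdot)\in L^1(\Omega;L^2(t,T))$ uniformly in $t$ (the second component of $\mathcal{H}^{1,\infty}$); it is \emph{not} BMO, since the forcing $\varphi(t)=-D_tY+\int_t^T\tfrac{D_{t,s}}{2\gamma}|Z^{\rho^{SE}}_s|^2ds$ is unbounded, so the associated martingale and its bracket are not conditionally bounded. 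Consequently the reduced inhomogeneity of \eqref{eqCSERM}, which contains the cross terms $Z^{\rho^{SE}}_sZ^{\Lambda^s(Y,Y)}(t,s)$ and $Z^{\rho^{\gamma_1}}_sZ^{\Lambda^s(Y,Y)}(t,s)$, is not known to have a finite --- let alone bounded --- conditional expectation without additional work (e.g.\ upgrading the integrability of $Z^{\Lambda^s(Y,Y)}$ via John--Nirenberg), so neither the well-posedness of your candidate nor the claimed regularity $\Lambda^{SE}\in L^{\infty}((0,T),\mathbb{F})$ follows from ``the identical argument''. The paper circumvents this with a different construction: it solves the driver-free BSVIE $\Lambda'_t(X-Y)=-D_t(X-Y)-\int_t^TZ^{\Lambda'}(t,s)\,dB^{\gamma_1}_s$, whose data are bounded so that Theorem \ref{LinfBSVIE} yields a bounded solution, and then sets $\Lambda^{SE}_t(X,Y)\triangleq\Lambda'_t(X-Y)+\Lambda^{sub}_t(Y,Y)$. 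This additive decomposition is precisely engineered so that all the problematic terms involving $Z^{\Lambda^s(Y,Y)}$ are absorbed into the already-solved equation for $\Lambda^{sub}_t(Y,Y)=\rho^{SE}_t(Y)$, which is bounded because it coincides with the risk measure. To repair your existence argument you should either adopt this decomposition or first establish higher integrability of $Z^{\Lambda^s(Y,Y)}$.
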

\begin{proof}
	We start proving that there exists a unique solution to \eqref{eqnr}. We know that $Z^{\rho^{SE}}_t\in \mbox{BMO}(\mathbb{P})$ (see \cite{Wang}), then $dB_t^{\rho^{SE}}=dB_t+\frac{1}{\gamma}Z^{\rho^{SE}}_tdt$ is a Brownian motion w.r.t. $\mathbb{Q}^{\rho^{SE}}=\mathcal{E}\left(\int_0^{\cdot}\frac{1}{\gamma}Z^{\rho^{SE}}_tdB_t\right)_T$. Hence, equation \eqref{eqnr} becomes
	$$\Lambda^{sub}_t(X,Y)=-D_tX+\int_t^T\frac{1}{2\gamma}D_{t,s}|Z^{\rho^{SE}}_s|^2 ds-\int_t^T Z^{\Lambda^s}(t,s)dB^{\rho^{SE}}_s=\varphi(t)-\int_t^T Z^{\Lambda^s}(t,s)dB^{\rho^{SE}}_s,$$
with $$\varphi(t)=-D_tX+\int_t^T\frac{1}{2\gamma}D_{t,s}|Z^{\rho^{SE}}_s|^2 ds.$$ We observe that $\varphi\in L^{1}(\Omega,\mathcal{F}_T;L^{\infty}(0,T))$, given that $D_t\in L^{\infty}((0,T),\mathcal{F}_T)$ and $Z^{\rho^{SE}}\in\mbox{BMO}(\mathbb{P})$. The last equation enables us to avoid dealing with stochastic Lipschitz coefficients. 
	Let us consider the family of BSDEs whose elements are defined for each fixed $(t,s)\in\Delta$ as follows:
	$$\eta(s;t)=\varphi(t)-\int_s^T\zeta(r;t)dB_r^{\rho^{SE}}, \ \ (t,s)\in\Delta.$$
There exists a unique solution $(\eta(\cdot;t),\zeta(\cdot;t))\in L^1(\Omega,\mathbb{F},L^{\infty}(t,T))\times L^1(\Omega,\mathbb{F};L^2(t,T))$ for each $t\in[0,T]$, by Theorem 3 in \cite{fan}. Moreover, we have:
 \begin{align*}
|\eta(s;t)|&\leq\mathbb{E}\left[|\varphi(t)|\big|\mathcal{F}_s\right]\leq C\mathbb{E}\left[\left.|X|+\int_t^T|Z^{\rho^{SE}}_r|^2 dr\right|\mathcal{F}_s\right] \\
&\leq C\left(\|X\|_{\infty}+\mathbb{E}\left[\int_t^s|Z^{\rho^{SE}}_r|^2 dr\right]+\mathbb{E}\left[\left. \int_s^T|Z^{\rho^{SE}}_r|^2ds\right|\mathcal{F}_s\right]\right) \\
&\leq C\left(\|X\|_{\infty}+\mathbb{E}\left[\int_t^T|Z^{\rho^{SE}}_r|^2dr\right]+\sup_{m\in[t,T]}\mathbb{E}\left[\left. \int_m^T|Z^{\rho^{SE}}_r|^2dr\right|\mathcal{F}_m\right]\right)\leq C,
  \end{align*}
where $C>0$ is a constant that does not depend on $(t,s)\in\Delta$ and can vary from line to line. Second inequality follows from the boundedness of $D_t$ while last inequality is due to $Z^{\rho^{SE}}\in\mbox{BMO}(\mathbb{P})$. Thus, we have:
$$\mathbb{E}\left[\sup_{(t,s)\in\Delta}|\eta(s;t)|\right]\leq C <+\infty.$$
By Proposition 1 in \cite{fan}, for any $(t,s)\in\Delta$ we have the estimate:
$$\mathbb{E}\left[\left(\int_s^T|\zeta(r;t)|^2dr\right)^{1/2}\right]\leq C\mathbb{E}\left[\sup_{m\in[s,T]}|\eta(m;t)|\right],$$ taking the sup over $t\in[0,T]$ we have:
$$\sup_{t\in[0,T]}\mathbb{E}\left[\left(\int_s^T|\zeta(r;t)|^2dr\right)^{1/2}\right]\leq C\sup_{t\in[0,T]}\mathbb{E}\left[\sup_{m\in[s,T]}|\eta(m;t)|\right]\leq C'<+\infty.$$
Setting $\Lambda^{sub}(t)\equiv \eta(t;t)$ and $Z^{\Lambda^s}(t,s)\equiv \zeta(s;t)$ with $(t,s)\in\Delta$, the thesis follows (uniqueness is given by the previous estimates together with linearity of the equation, by taking $\varphi\equiv 0$).

Now we are ready to show the existence and uniqueness for the BSVIE \eqref{eqCSERM}.
	We are going to build the solution $(\Lambda^{SE}_t,Z^{SE}(t,s))$ by means of the solution to the BSVIE:
	$$\Lambda'_t(X-Y)=-D_t(X-Y)-\int_t^T Z^{\Lambda'}(t,s)dB^{\gamma_1}_s,$$ where $dB^{\gamma_1}_s\triangleq dB_s-\frac{2}{\gamma_1}Z^{\gamma_1}_sds$ is a $\mathbb{Q}^{\gamma_1}$-Brownian motion given that $\frac{2}{\gamma_1}Z^{\gamma_1}_s\in \mbox{ BMO}(\mathbb{P})$. The last equation admits a unique bounded solution according to Theorem \ref{LinfBSVIE}. Set now
	\begin{eqnarray*}
		&&\Lambda^{SE}_t(X,Y)\triangleq\Lambda'_t(X-Y)+\rho_t^{SE}(Y)=\Lambda'_t(X-Y)+\Lambda^{sub}_t(Y,Y)\in L^{\infty}([0,T],(\mathcal{F}_t)_{t\in[0,T]};\mathbb{R}) \\
		&&Z^{SE}(t,s)\triangleq Z^{\Lambda'}(t,s)+Z^{\Lambda^s(Y,Y)}(t,s)\in L^{\infty}((0,T);L^1(\Omega,(\mathcal{F}_t)_{t\in[t,T]};L^{2}(t,T))).
	\end{eqnarray*}
	By construction, $(\Lambda^{SE}_t(X,Y),Z^{\Lambda^{SE}}(t,s))$ is a solution of equation \eqref{eqCSERM} with the above mentioned regularity.
	It remains to prove uniqueness of the solution. Let us consider another solution $(\Lambda^1_t,Z^{\Lambda^1}(t,s))\in \mathcal{H}^{1,\infty}$ to equation \eqref{eqCSERM}. Then
	\begin{eqnarray*}
		&&\Lambda^{SE}_t(X,Y)-\Lambda^1_t(X,Y) \\
		&=&\int_t^T\frac{2}{\gamma_1}  Z^{\rho^{\gamma_1}}_t (Z^{\Lambda^{SE}}(t,s)-Z^{\Lambda^1}(t,s)) \, ds-\int_t^T (Z^{\Lambda^{SE}}(t,s)-Z^{\Lambda^1}(t,s)) \, dB_s \\
		&=&\int_t^T (Z^{\Lambda^{SE}}(t,s)-Z^{\Lambda^1}(t,s)) \, dB^{\gamma_1}_s,
	\end{eqnarray*}
where $dB_t^{\gamma_1}\triangleq dB_t-\frac{2}{\gamma_1}Z^{\rho^{\gamma_1}}_tdt$ is a $\mathbb{Q}^{\gamma_1}$-Brownian motion. Since the last BSVIE admits a unique solution given by $(\Lambda^{SE}_t-\Lambda^1_t\equiv 0,Z^{\Lambda^{SE}}(t,s)-Z^{\Lambda^1}(t,s)\equiv 0)\in\mathcal{H}^{1,\infty}$ (thanks to the previous results), the uniqueness then follows.
	\label{EUrem}
\end{proof}

\section{Conclusions} \label{sec: conclus}

To sum up, in this paper we have focused on capital allocations associated to cash-subadditive risk measures, adopting an axiomatic approach without assuming Gateaux differentiability and providing examples where the underlying risk measure is driven by a BSDE. To the best of our knowledge, this is the first attempt to deal with dynamic capital allocation rules when the underneath risk measure does not fulfil the axiom of cash-additivity.
More precisely, we have extended the results obtained in Mastrogiacomo and Rosazza Gianin \cite{MastroRos1} and in Kromer and Overbeck \cite{KO} to the cash-subadditive case. At the same time, differently from Centrone and Rosazza Gianin \cite{centrone} dealing with static CARs associated to convex and quasi-convex risk measures, we encompass in our treatment also the dynamic setting.

For risk measures induced by BSDEs, \cite{MastroRos1,KO} showed that the corresponding capital allocations follow still a BSDE; in particular, this result is due to the axiom of cash-additivity. On the Volterra side, instead, Kromer and Overbeck \cite{KO1} proved that the dynamic gradient capital allocation of risk measures generated by a BSVIE is still given by a BSVIE.
Differently from these approaches mapping either from BSDEs to BSDEs or from BSVIEs to BSVIEs in the cash-additive case, here we have provided an alternative way to build capital allocation rules starting from cash-subadditive risk measures induced by BSDEs.

Compared to the existing literature, the main novelties of our work consist of: firstly, weakening cash-additivity with cash-subadditivity at the level of risk measures and evaluating the impact on capital allocations; secondly, proving that the subdifferential CAR associated to a cash-subadditive risk measure induced by a BSDE follows a BSVIE; thirdly, having introduced a general way to build different CARs given by BSVIEs. This general approach allows to treat the cash-subadditive case similarly as done for cash-additive risk measures in \cite{MastroRos1}, switching from probabilities to sub-probabilities and from BSDEs to BSVIEs.
\bigskip
\bigskip

\textbf{Acknowledgements:} The authors are members of Gruppo Nazionale per l'Analisi Matematica, la Probabilit\`{a} e le loro Applicazioni (GNAMPA), Italy. The authors thank Ilaria Peri and the participants at the Quantitative Finance Workshop 2023 in Gaeta, Italy, for comments and discussions.

\end{document}